\newtheorem{theorem}{Theorem}[section]
\newtheorem{lemma}[theorem]{Lemma}
\newtheorem{proposition}[theorem]{Proposition}
\newtheorem{corollary}[theorem]{Corollary}
\theoremstyle{remark}
\newtheorem{definition}[theorem]{Definition}
\newtheorem*{remark}{Remark}
\newtheorem*{lastremark}{Final remark}
\newcommand{\bC}{\mathbb{C}}
\newcommand{\bQ}{\mathbb{Q}}
\newcommand{\bR}{\mathbb{R}}
\newcommand{\bZ}{\mathbb{Z}}
\newcommand{\cE}{{\mathcal{E}}}
\newcommand{\cO}{{\mathcal{O}}}
\newcommand{\cP}{{\mathcal{P}}}
\newcommand{\cW}{{\mathcal{W}}}
\newcommand{\adj}{\mathrm{Adj}}
\newcommand{\alphabar}{\bar{\alpha}}
\newcommand{\disc}{\mathrm{disc}}
\newcommand{\disp}{\displaystyle}
\newcommand{\et}{\quad\mbox{and}\quad}
\newcommand{\GL}{\mathrm{GL}}
\newcommand{\lambdahat}{\hat{\lambda}}
\newcommand{\matrice}[4]{\begin{pmatrix}#1&#2\\#3&#4\end{pmatrix}}
\newcommand{\SL}{\mathrm{SL}}
\newcommand{\trace}{\mathrm{tr}}
\newcommand{\trans}{{\,{}^t\hskip-1pt}}
\newcommand{\tM}{{\,{}^t\hskip-1pt M}}
\newcommand{\tS}{{\,{}^t\hskip-1pt S}}
\newcommand{\tux}{{\,{}^t\ux}}
\newcommand{\tW}{{\,{}^t\hskip-0.3pt W}}
\newcommand{\ua}{\mathbf{a}}
\newcommand{\ub}{\mathbf{b}}
\newcommand{\um}{\mathbf{m}}
\newcommand{\un}{\mathbf{n}}
\newcommand{\up}{\mathbf{p}}
\newcommand{\uv}{\mathbf{v}}
\newcommand{\uw}{\mathbf{w}}
\newcommand{\ux}{\mathbf{x}}
\newcommand{\uy}{\mathbf{y}}
\newcommand{\Res}{\mathrm{Res}}
\newcommand{\where}{\quad\mbox{where}\quad}
\newcommand{\acc}{
\raisebox{-4pt}{\rule{.4pt}{4pt}} \rule{110pt}{0.4pt}
\rule{.4pt}{4pt} \rule{110pt}{0.4pt}
\raisebox{-4pt}{\rule{.4pt}{4pt}} }
\newcommand{\acci}{
\raisebox{-4pt}{\rule{.4pt}{4pt}} \rule{80pt}{0.4pt}
\rule{.4pt}{4pt} \rule{80pt}{0.4pt}
\raisebox{-4pt}{\rule{.4pt}{4pt}} }
\newcommand{\accii}{
\raisebox{-4pt}{\rule{.4pt}{4pt}} \rule{40pt}{0.4pt}
\rule{.4pt}{4pt} \rule{40pt}{0.4pt}
\raisebox{-4pt}{\rule{.4pt}{4pt}} }
\newcommand{\acciii}{
\raisebox{-4pt}{\rule{.4pt}{4pt}} \rule{20pt}{0.4pt}
\rule{.4pt}{4pt} \rule{20pt}{0.4pt}
\raisebox{-4pt}{\rule{.4pt}{4pt}} }
\newcommand{\lacc}{
\raisebox{-4pt}{\rule{.4pt}{4pt}} \rule{100pt}{0.4pt}
\rule{.4pt}{4pt} }
\newcommand{\lacci}{
\raisebox{-4pt}{\rule{.4pt}{4pt}} \rule{80pt}{0.4pt}
\rule{.4pt}{4pt} }
\newcommand{\laccii}{
\raisebox{-4pt}{\rule{.4pt}{4pt}} \rule{40pt}{0.4pt}
\rule{.4pt}{4pt} }
\newcommand{\racci}{
\rule{.4pt}{4pt} \rule{80pt}{0.4pt}
\raisebox{-4pt}{\rule{.4pt}{4pt}} }
\newcommand{\raccii}{
\rule{.4pt}{4pt} \rule{40pt}{0.4pt}
\raisebox{-4pt}{\rule{.4pt}{4pt}} }
\newcommand{\lacciii}{
\raisebox{-4pt}{\rule{.4pt}{4pt}} \rule{20pt}{0.4pt}
\rule{.4pt}{4pt} }
\newcommand{\racciii}{
\rule{.4pt}{4pt} \rule{20pt}{0.4pt}
\raisebox{-4pt}{\rule{.4pt}{4pt}} }
\newcommand{\g}{g}
\begin{document}

\baselineskip=16pt %16.5pt%20pt

\title[Markoff-Lagrange spectrum and extremal numbers]
{Markoff-Lagrange spectrum and extremal numbers}
\author{Damien ROY}
\address{
   D\'epartement de Math\'ematiques\\
   Universit\'e d'Ottawa\\
   585 King Edward\\
   Ottawa, Ontario K1N 6N5, Canada}
\email{droy@uottawa.ca}
\subjclass[2000]{Primary 11J06; Secondary 11J13}
\keywords{Lagrange spectrum, Markoff spectrum, simultaneous
Diophantine approximation, extremal numbers.}
\thanks{Work partially supported by NSERC and CRM}

\begin{abstract}
Let $\gamma = (1+\sqrt{5})/2$ denote the golden ratio.  H.~Davenport
and W.~M.~Schmidt showed in 1969 that, for each non-quadratic
irrational real number $\xi$, there exists a constant $c>0$ with the
property that, for arbitrarily large values of $X$, the inequalities
\[
 |x_0|\le X, \quad
 |x_0\xi-x_1| \le cX^{-1/\gamma}, \quad
 |x_0\xi^2-x_2| \le cX^{-1/\gamma}
\]
admit no non-zero solution $(x_0,x_1,x_2)\in\bZ^3$.  Their result is
best possible in the sense that, conversely, there are countably
many non-quadratic irrational real numbers $\xi$ such that, for a
larger value of $c$, the same inequalities admit a non-zero integer
solution for each $X\ge 1$.  Such \emph{extremal} numbers are
transcendental and their set is stable under the action of
$\GL_2(\bZ)$ on $\bR\setminus\bQ$ by linear fractional
transformations.  In this paper, it is shown that there exists
extremal numbers $\xi$ for which the Lagrange constant $\nu(\xi) =
\liminf_{q\to\infty} q\,\|q\xi\|$ is $1/3$, the largest possible
value for a non-quadratic number, and that there is a natural
bijection between the $\GL_2(\bZ)$-equivalence classes of such
numbers and the non-trivial solutions of Markoff's equation.
\end{abstract}

\maketitle

\section{Introduction}
 \label{sec:intro}

The purpose of this paper is to present a link between two
relatively distant topics of Diophantine approximation.  The first
one concerns the \emph{Lagrange constant} $\nu(\xi)$ of a real
number $\xi$ defined as the infimum of all real numbers $c>0$ for
which the inequality
\[
 \Big| \xi-\frac{p}{q} \Big| \le \frac{c}{q^2}
\]
has infinitely many solutions $(p,q)\in\bZ^2$ with $q\ge 1$.  This
constant, which vanishes when $\xi\in \bQ$, provides a measure of
approximation of $\xi$ by rational numbers. It is also given by
\[
 \nu(\xi) = \liminf_{q\to\infty} q \|q\xi\|,
\]
where $\|x\|$ stands for the distance from a real number $x$ to a
closest integer.  The \emph{Lagrange spectrum} is the set $\nu(\bR)$
of values of $\nu$.  It is a subset of the interval $[0,1/\gamma]$
where $\gamma=(1+\sqrt{5})/2$ denotes the golden ratio. Thanks to
work of Markoff, the portion of the spectrum in the subinterval
$(1/3,1/\gamma]$ is well understood (see \cite[Ch.~II, \S6]{Ca}). It
forms a countable discrete subset of this subinterval with $1/3$ as
its only accumulation point.  Moreover the real numbers $\xi$ for
which $\nu(\xi)>1/3$ are all quadratic. As a consequence, any
transcendental real number $\xi$ has $\nu(\xi)\le 1/3$.  In the
range $[0,1/3]$, the situation becomes more complicated.  Although,
with respect to Lebesgue measure, almost all real numbers $\xi$ have
$\nu(\xi)=0$, we know in particular that there are uncountably many
$\xi\in\bR$ with $\nu(\xi)=1/3$.

The second topic is the problem of simultaneous rational
approximations to a real number and its square, from a uniform
perspective. In 1969, H.~Davenport and W.~M.~Schmidt showed
\cite[Thm.~1a]{DS} that, for each non-quadratic irrational real
number $\xi$, there exists a constant $c>0$ with the property that,
for arbitrarily large values of $X$, the inequalities
\[
 |x_0|\le X, \quad
 |x_0\xi-x_1| \le cX^{-1/\gamma}, \quad
 |x_0\xi^2-x_2| \le cX^{-1/\gamma}
\]
admit no non-zero solution $(x_0,x_1,x_2)\in\bZ^3$. Recently, it was
established \cite[Thm.~1.1]{RcubicI} that their result is best
possible in the sense that, conversely, there are countably many
non-quadratic irrational real numbers $\xi$ which we henceforth call
\emph{extremal} such that, for a larger value of $c$, the same
inequalities admit a non-zero integer solution for each $X\ge 1$.
Our objective here is to show the existence of extremal numbers
$\xi$ with $\nu(\xi)=1/3$ and to show how this set is intimately
linked with Markoff's theory.

In the next section, we present the main results of Markoff's theory
from a point of view pertaining to the study of extremal numbers.
Then, in Section 3, we construct a family of extremal numbers
$\xi_\um$ parametrized by all solutions in positive integers
$\um=(m, m_1,m_2)$ of the Markoff equation
\begin{equation}
 \label{Markoffeq}
 m^2 + m_1^2 + m_2^2 = 3 m m_1 m_2,
\end{equation}
up to permutation, except $\um=(1,1,1)$. Our main result is that
these numbers $\xi_\um$ constitute a system of representatives of
the equivalence classes of extremal numbers $\xi$ with
$\nu(\xi)=1/3$, under the action of $\GL_2(\bZ)$ on
$\bR\setminus\bQ$ by linear fractional transformations.  To prove
this, we develop further the properties of approximation to extremal
numbers by quadratic real numbers obtained in \cite[\S8]{RcubicI}.
Each extremal number $\xi$ comes with a sequence of best quadratic
approximations $(\alpha_i)_{i\ge 1}$ which is uniquely determined by
$\xi$ up to its first terms.  In Section 4, we show that the
sequence of their conjugates $(\alphabar_i)_{i\ge 1}$ admits exactly
two accumulation points $\xi'$ and $\xi''$ which are also extremal
numbers and which we call the \emph{conjugates} of $\xi$.  Then, in
Section 5, we show that $\nu(\xi) = \nu(\xi') = \nu(\xi'')$ and that
these Lagrange constants can be computed as the infimums of the
absolute values of the binary real quadratic forms
\[
 |\xi-\xi'|^{-1}(T-\xi U)(T-\xi' U)
 \et
 |\xi-\xi''|^{-1}(T-\xi U)(T-\xi'' U)
\]
on $\bZ^2\setminus\{(0,0)\}$. The latter quantities admit handy
representations in terms of doubly infinite words attached to the
continued fraction expansions of $\xi$ and $\xi'$ on one hand, and
of $\xi$ and $\xi''$ on the other hand. This is at the basis of
Markoff's original approach. However, it requests that $0<\xi<1$ and
$\max\{\xi',\xi''\} < -1$. In Section 6, we show that each extremal
number is $\GL_2(\bZ)$-equivalent to exactly one extremal number
$\xi$ with these properties and with conjugates $\xi'$ and $\xi''$
of different integral parts.  We say that such an extremal number is
\emph{balanced}. We also provide a characterization of the numbers
$\xi_\um$ in terms of their continued fraction expansions. Finally,
we conclude in Section 7 with the proof of our main result by
showing that any balanced extremal number $\xi$ with $\nu(\xi)=1/3$
is equivalent to some $\xi_\um$ on the basis of the strong
combinatorial properties shared by the two doubly infinite words
attached to $\xi$. As a corollary, we obtain that an extremal number
$\xi$ has $\nu(\xi) = 1/3$ if and only if its sequence of best
quadratic approximations $(\alpha)_{i\ge 1}$ satisfies
$\nu(\alpha_i)>1/3$ for infinitely many indices $i$.

%%%%%%%%%%%%%%%%%%%%%%%%%%%%%%%%%%%%%%%%%%%%%%%%%%%%%%%%%%%%%%%%%%%%
%
%   Markoff's theory
%
%%%%%%%%%%%%%%%%%%%%%%%%%%%%%%%%%%%%%%%%%%%%%%%%%%%%%%%%%%%%%%%%%%%%

\section{Markoff's theory}
\label{sec:M}

A general reference for this section is the exposition given by
J.~W.~S.~Cassels in Chapter II of \cite{Ca}.  In the presentation
below, we reinterpret his constructions in \cite[Ch.~II, \S3]{Ca},
from a point of view closer to the approach of H.~Cohn in \cite{Co},
to align them with similar constructions arising from the study of
extremal numbers.

We recall first that the group $\GL_2(\bQ)$ acts on the set
$\bR\setminus\bQ$ of irrational real numbers by
\begin{equation}
 \label{M:eq:actionGL}
 \g\cdot\xi = \frac{a\xi+b}{c\xi+d}
 \quad\text{if}\quad
 \g=\begin{pmatrix}a &b\\c &d\end{pmatrix} \in \GL_2(\bQ),
\end{equation}
and that we have $\nu(\g\cdot\xi) = \nu(\xi)$ for any
$\g\in\GL_2(\bZ)$ and any $\xi \in \bR \setminus \bQ$ \cite[Ch.~I,
\S3, Cor.]{Ca}.  Consequently, the Lagrange spectrum can be
described as the set of values taken by $\nu$ on a set of
representatives of the equivalence classes of $\bR\setminus\bQ$
under $\GL_2(\bZ)$.

A real binary quadratic form $F(U,T)=rU^2+qUT+sT^2 \in \bR[U,T]$ is
said to be \emph{indefinite} if its \emph{discriminant}
$\disc(F)=q^2-4pr$ is positive.  For such a form, one is interested
in the quantity
\[
 \mu(F)
 := \inf\{\,|F(x,y)|\,;\, (x,y)\in\bZ^2,\ (x,y)\neq (0,0)\,\}.
\]
Keeping the same notation as in \eqref{M:eq:actionGL}, the group $\bR^*
\times \GL_2(\bZ)$ acts on the set of real indefinite binary
quadratic forms by
\[
 (\lambda,\g)\cdot F(U,T) = \lambda F((U,T)\g)
    = \lambda F(aU+cT,\,bU+dT),
\]
and this action fixes the ratio $\mu(F)/\sqrt{\disc(F)}$.  The
\emph{Markoff spectrum} is the set of values of these quotients
$\mu(F) / \sqrt{\disc(F)}$ where $F$ runs through the set of all
real indefinite binary quadratic forms or equivalently through a
system of representatives of the equivalence classes of these forms
under the above action of $\bR^* \times \GL_2(\bZ)$.  Although this
spectrum contains strictly the Lagrange spectrum \cite[Ch.~3,
Thm.~1]{CF}, a remarkable feature of Markoff's theory is that the
trace of the two spectra in the interval $(1/3,1/\gamma]$ are the
same (recall that $\gamma=(1+\sqrt{5})/2$).

The theory provides explicit sets of representatives both for the
equivalence classes of real numbers $\xi$ with $\nu(\xi)>1/3$ and
for the equivalence classes of real indefinite binary quadratic
forms $F$ with $\mu(F)/\sqrt{\disc(F)} > 1/3$. They are
parameterized by the solutions in positive integers $\um = (m,\,
m_1,\, m_2)$ of Markoff's equation \eqref{Markoffeq} upon
identifying two solutions when one is a permutation of the other.
Setting aside the ``degenerate solutions'' $(1,1,1)$ and $(2,1,1)$
which have at least two equal entries, all other solutions in
positive integers appear once and only once in the rooted binary
tree
\begin{equation}
 \label{M:Mtree}
  \begin{matrix}
  (5,1,2)\\[-3pt]
  \acci\\[3pt]
  (13,1,5)  \hspace{120pt} (29,5,2)\\[-3pt]
  \accii \hspace{80pt} \accii\\[3pt]
  \hspace{6pt} (34,1,13) \hspace{28pt} (194,13,5) \hspace{26pt}
  (433,5,29) \hspace{27pt} (169,29,2)\\[-3pt]
  \acciii \hspace{40pt} \acciii \hspace{40pt}
  \acciii \hspace{40pt} \acciii\\[3pt]
  \cdots \hspace{25pt} \cdots \hspace{25pt} \cdots \hspace{25pt}
  \cdots \hspace{25pt}
  \cdots \hspace{25pt} \cdots \hspace{25pt} \cdots \hspace{25pt}
  \cdots
  \end{matrix}
\end{equation}
where each node $(m,\, m_1,\, m_2)$ has successors given by
$(3mm_1-m_2,\, m_1,\, m)$ on the left and by $(3mm_2-m_1,\,m,\,m_2)$
on the right.  Moreover, all nodes $(m,\, m_1,\, m_2)$ satisfy $m>
\max\{m_1,m_2\}$ \cite[Ch.~II, \S2]{Ca}.

The same construction starting with $(2,1,1)$ as a root provides a
tree which contains exactly once each triple of positive integers
$(m,\, m_1,\, m_2)$ satisfying \eqref{Markoffeq} and $m>
\max\{m_1,m_2\}$.  In this new tree, each non-degenerate solution is
duplicated, with the tree \eqref{M:Mtree} appearing as its left
half.  This suggests to extend the latter by adding $(2,1,1)$ as a
right ancestor of $(5,1,2)$:
\begin{equation}
 \label{M:Mtreex}
  \begin{matrix}
  \hspace{200pt} (2,1,1)\\[-3pt]
  \hspace{100pt}\lacc\\[3pt]
  (5,1,2)\\[-3pt]
  \acci\\[3pt]
  (13,1,5)  \hspace{120pt} (29,5,2)\\[-3pt]
  \accii \hspace{80pt} \accii\\[3pt]
  \cdots \hspace{65pt} \cdots \hspace{65pt}
  \cdots \hspace{65pt} \cdots\\[-3pt]
  \end{matrix}
\end{equation}
In this extended tree, a node $\um=(m,m_1,m_2)$ has $m_1>m_2$ if and
only if $\um$ has a left ancestor.  In the sequel, we denote by
$\Sigma^*$ the set of all nodes of the tree \eqref{M:Mtreex}, and by
$\Sigma = \Sigma^* \cup \{(1,1,1)\}$ the set of all solutions of the
Markoff equation \eqref{Markoffeq}.

The next proposition lifts \eqref{M:Mtree} to a tree whose nodes are
triples of symmetric matrices in $\SL_2(\bZ)$ (compare with
\cite[Ch.~II, \S3]{Ca} and \cite[\S5]{Co}).

\begin{proposition}
 \label{M:propStree}
Put $M=\matrice{3}{1}{-1}{0}$ and consider the binary rooted tree
\begin{equation}
  \label{M:propStree:tree}
  \begin{matrix}
  \Big(\matrice{5}{3}{3}{2},\ \matrice{1}{1}{1}{2},\
       \matrice{2}{1}{1}{1}\Big)\\[-3pt]
  \acc\\[3pt]
  \Big(\matrice{13}{8}{8}{5},\ \matrice{1}{1}{1}{2},\
       \matrice{5}{3}{3}{2}\Big)  \hspace{50pt}
  \Big(\matrice{29}{17}{17}{10},\ \matrice{5}{3}{3}{2},\
       \matrice{2}{1}{1}{1}\Big)\\[-1pt]
  \acci \hspace{60pt} \acci\\
  \cdots \hspace{146pt} \cdots \hspace{44pt} \cdots
         \hspace{146pt} \cdots
  \end{matrix}
\end{equation}
where the successors of each node $(\ux,\ux_1,\ux_2)$ are $(\ux_1
M\ux,\ux_1,\ux)$ on the left and $(\ux M\ux_2,\ux,\ux_2)$ on the
right.  Then each node $(\ux,\ux_1,\ux_2)$ of this tree is a triple
of symmetric matrices in $\SL_2(\bZ)$ with positive entries of the
form
\begin{equation}
 \label{M:propStree:eq2}
  \ux=\matrice{m}{k}{k}{\ell}, \quad
  \ux_1=\matrice{m_1}{k_1}{k_1}{\ell_1},\quad
  \ux_2=\matrice{m_2}{k_2}{k_2}{\ell_2}
\end{equation}
satisfying both $\ux = \ux_1 M\ux_2$ and $\max\{k,\ell\}\le m\le
2k$. Moreover, the tree formed by replacing each of these triples of
matrices $(\ux,\ux_1,\ux_2)$ by the triple of their upper left
entries $(m,m_1,m_2)$ is exactly the tree \eqref{M:Mtree} of
non-degenerate solutions of the Markoff equation.
\end{proposition}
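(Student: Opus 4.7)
The plan is to prove Proposition~\ref{M:propStree} by induction on the depth of a node in the tree. The base case consists of verifying the assertions directly for the root triple $(\ux,\ux_1,\ux_2) = \big(\matrice{5}{3}{3}{2}, \matrice{1}{1}{1}{2}, \matrice{2}{1}{1}{1}\big)$: each matrix is symmetric with positive integer entries and determinant $1$; the identity $\ux_1 M = \ux_2$ is immediate, so $\ux_1 M \ux_2 = \ux_2^2 = \ux$; the inequalities $\max\{3,2\}\le 5\le 6$ hold; and the upper-left entries $(5,1,2)$ form the root of \eqref{M:Mtree}.

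For the inductive step, I propagate the properties from a node $(\ux,\ux_1,\ux_2)$ to its left successor $(\uy,\ux_1,\ux):=(\ux_1 M\ux,\ \ux_1,\ \ux)$, the right successor being entirely analogous. The conceptual heart is the symmetry of $\uy$. Transposing $\ux = \ux_1 M \ux_2$ and using that each factor is symmetric yields $\ux = \ux_2 M^T \ux_1$, hence $\ux_1 M \ux_2 = \ux_2 M^T \ux_1$. Substituting $\ux = \ux_2 M^T \ux_1$ into $\uy = \ux_1 M \ux$ then gives
\[
  \uy = \ux_1 M \ux_2 M^T \ux_1 = \ux M^T \ux_1 = (\ux_1 M \ux)^T = \uy^T,
\]
so $\uy$ is symmetric. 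Integrality is clear, $\det\uy = \det\ux_1\cdot\det M\cdot\det\ux = 1$, and the relation $\uy = \ux_1 M \ux$ needed to feed the next induction step is automatic.

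For positivity of the entries of $\uy$ and the size constraint $\max\{k_y,\ell_y\}\le m_y\le 2k_y$, I expand
\[
  \ux_1 M = \matrice{3m_1-k_1}{m_1}{3k_1-\ell_1}{k_1}
\]
and compute $\uy = (\ux_1 M)\ux$ entry by entry. Positivity reduces to the auxiliary inequalities $3m_1\ge k_1$ and $3k_1\ge\ell_1$, which I carry alongside the main statement: they hold for the two root matrices by inspection, and for any first matrix $\ux$ of a node they follow from $k\le m$ and $\ell\le m\le 2k$, since then $3m-k\ge 2m$ and $3k-\ell\ge k$. The inequalities $m_y\ge k_y\ge\ell_y$ and $m_y\le 2k_y$ then follow from $m\ge k\ge\ell$ and $m\le 2k$ on $\ux$ through a direct calculation.

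Finally, the identification with the Markoff tree \eqref{M:Mtree} rests on a determinant identity. Expanding $k_1 m - m_1 k$ via $\ux = \ux_1 M \ux_2$ collapses after cancellation to $(m_1\ell_1-k_1^2)\,m_2 = \det(\ux_1)\,m_2 = m_2$, and a symmetric computation yields $km_2 - mk_2 = \det(\ux_2)\,m_1 = m_1$. Hence the upper-left entry of $\uy$ equals $(3m_1-k_1)m + m_1 k = 3mm_1 - (k_1 m - m_1 k) = 3mm_1 - m_2$, matching the left child of $(m,m_1,m_2)$ in \eqref{M:Mtree}, and an analogous calculation gives $3mm_2 - m_1$ for the right child. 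The main obstacle is the symmetry step, as it hinges on the nonobvious reformulation $\ux_1 M\ux_2 = \ux_2 M^T\ux_1$ forced by symmetry of $\ux$; once that identity is in hand, the remaining verifications are direct bookkeeping.
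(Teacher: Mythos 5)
Your overall strategy (induction down the tree, with the left successor treated explicitly) is sound, and in two places it takes a genuinely different route from the paper. For symmetry you transpose $\ux=\ux_1M\ux_2$ to get $\ux_1M\ux_2=\ux_2\tM\ux_1$ and substitute --- correct, and a nice trick. The paper instead applies Cayley--Hamilton to $\ux_1M$ and $M\ux_2$ to obtain $\ux_1M\ux=3m_1\ux-\ux_2$ and $\ux M\ux_2=3m_2\ux-\ux_1$, which delivers symmetry, the Markoff recursion $m'=3m_1m-m_2$, and the propagation of every linear inequality $\varphi(\ux)\ge\varphi(\ux_i)\ge c$ in a single stroke. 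Your determinant identities $k_1m-m_1k=m_2$ and $km_2-mk_2=m_1$ are correct and do recover the Markoff recursion, so that part of your argument stands on its own.

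The gap is in the propagation of $m\le 2k$. Write $\ux_1M=\matrice{A}{B}{C}{D}$ with $A=3m_1-k_1$, $B=m_1$, $C=3k_1-\ell_1$, $D=k_1$; then the left successor $\uy=(\ux_1M)\ux$ has $m_y=Am+Bk$ and $k_y=Ak+B\ell$, so $2k_y-m_y=A(2k-m)+B(2\ell-k)$. The hypotheses you carry ($m\ge k\ge\ell$ and $m\le 2k$ on $\ux$, plus $A\ge0$, $C\ge0$) give no control on the sign of $2\ell-k$: the inequalities $m\ge k\ge\ell$ and $m\le 2k$ are perfectly compatible with $2\ell<k$ (e.g.\ $(m,k,\ell)=(6,3,1)$), and using the alternative expression $k_y=Cm+Dk$ instead produces coefficients such as $7k_1-2\ell_1-3m_1$ that your auxiliary inequalities $3m_1\ge k_1$, $3k_1\ge\ell_1$ also do not control. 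So the advertised ``direct calculation'' would fail as stated. The missing ingredient is $2\ell\ge k$ for the right-hand factor, which you can obtain in either of two ways: add $k\le 2\ell$ to your list of inductive invariants (it holds at the root and propagates by the mirror computation $2\ell_y-k_y=C(2k-m)+D(2\ell-k)$, and must also be checked for $\matrice{2}{1}{1}{1}$, which occurs as a right factor on the rightmost branch), or deduce it from $\det\ux=1$ together with $m\le2k$, since $2k\ell\ge m\ell=k^2+1>k^2$. Either repair closes the induction; note also that the paper still needs a separate determinant argument at the end to get $\ell\le m$, since that inequality fails for $\matrice{1}{1}{1}{2}$ and hence cannot be propagated uniformly over all three components.
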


\begin{proof}
We first note that the triple of upper left entries of the root
of this tree is the root $(5,1,2)$ of the Markoff tree
\eqref{M:Mtree}. Now, suppose that a node $(\ux,\ux_1,\ux_2)$
of the tree consists of symmetric matrices in $\SL_2(\bZ)$
satisfying $\ux = \ux_1 M\ux_2$, and that the corresponding
triple $(m,m_1,m_2)$ is a node of the Markoff tree. Using
Cayley-Hamilton's theorem, we find
\begin{equation}
 \label{M:propStree:eq4}
 \begin{aligned}
 \ux_1 M \ux
   &= (\ux_1M)^2\ux_2
   = \big(\trace(\ux_1M)\ux_1M-\det(\ux_1M)I\big)\ux_2
   = 3m_1\ux - \ux_2,\\
 \ux M \ux_2
  &= \ux_1(M\ux_2)^2
   = \ux_1 \big(\trace(M\ux_2)M\ux_2-\det(M\ux_2)I\big)
   = 3m_2\ux - \ux_1.
 \end{aligned}
\end{equation}
Since $\ux_1$, $\ux_2$ and $\ux$ are symmetric matrices in
$\SL_2(\bZ)$ and since $M\in\SL_2(\bZ)$, we conclude that these
products are also symmetric matrices in $\SL_2(\bZ)$.  Moreover, if
we write
\[
 \ux_1 M \ux = \matrice{m'_2}{k'_2}{k'_2}{\ell'_2}
 \et
 \ux M \ux_2 = \matrice{m'_1}{k'_1}{k'_1}{\ell'_1},
\]
then we obtain $m'_2=3m_1m-m_2$ and $m'_1=3m_2m-m_1$ showing that
the triples $(m'_2,m_1,m)$ and $(m'_2,m,m_2)$ associated
respectively to the left and right successors of $(\ux,\ux_1,\ux_2)$
are respectively the left and right successors of $(m,m_1,m_2)$ in
the Markoff tree.  By recurrence, this proves all the assertions of
the proposition besides the constrains on the coefficients of the
matrices. To prove the latter, suppose that the node
$(\ux,\ux_1,\ux_2)$ satisfies conditions of the form
\begin{equation}
 \label{M:propStree:eq5}
 \varphi(\ux) \ge \varphi(\ux_i) \ge c \quad \text{for $i=1,2$,}
\end{equation}
for some constant $c\ge 0$ and some linear form $\varphi$ on
the space of $2\times 2$ matrices.  Then, using the fact that
$m_1$ and $m_2$ are positive (because $(m,m_1,m_2) \in
\Sigma^*$), the relations \eqref{M:propStree:eq4} lead to
\[
 \begin{aligned}
 \varphi(\ux_1 M \ux)
   &= 3m_1\varphi(\ux) - \varphi(\ux_2)
   \ge \varphi(\ux) \ge \varphi(\ux_1) \ge c,\\
 \varphi(\ux M \ux_2)
   &= 3m_2\varphi(\ux) - \varphi(\ux_1)
   \ge \varphi(\ux) \ge \varphi(\ux_2) \ge c,
 \end{aligned}
\]
showing by induction on the level that \eqref{M:propStree:eq5} holds
for each node of the tree \eqref{M:propStree:tree} as soon as it
holds for its root. Since the latter satisfies $m\ge m_i\ge 1$,
$k\ge k_i\ge 1$ and $\ell\ge \ell_i\ge 1$ for $i=1,2$, we conclude
that each node of the tree meets these conditions and so consists of
matrices with positive entries. Moreover, since the root also
satisfies $m-k\ge m_i-k_i \ge 0$ and $2k-m\ge 2k_i-m_i\ge 0$ for
$i=1,2$, each node meets these additional conditions and in
particular satisfies $k\le m\le 2k$.  Finally, since $(m, m_1, m_2)
\in \Sigma^*$, we have $m> \max\{m_1,m_2\}\ge 1$, thus $m\ge 2$ and,
from $1=\det(\ux)=m\ell-k^2$, we deduce that $\ell = (k^2+1)/m \le
m+1/m < m+1$ and therefore $\ell\le m$.
\end{proof}

For each node $\um = (m,m_1,m_2)$ of \eqref{M:Mtree}, we denote by
\begin{equation}
 \label{M:ux_um}
 \ux_\um = \matrice{m}{k}{k}{\ell}
\end{equation}
the first component of the corresponding node
\eqref{M:propStree:eq2} of the tree \eqref{M:propStree:tree}, and we
extend this definition to all of $\Sigma$ by putting
\begin{equation}
 \label{M:ux_um_init}
 \ux_{(1,1,1)} = \matrice{1}{1}{1}{2}
 \et
 \ux_{(2,1,1)} = \matrice{2}{1}{1}{1}.
\end{equation}
Then, for each $\um\in\Sigma$, we define
\begin{equation}
 \label{M:F_um}
 F_\um(U,T)
 = \begin{pmatrix}T &-U\end{pmatrix}\ux_\um M
      \begin{pmatrix}U \\ T\end{pmatrix}
 = mT^2 + (3m-2k)TU + (\ell-3k) U^2,
\end{equation}
using the notation \eqref{M:ux_um}.  Since $\det(\ux_\um) =
m\ell-k^2 = 1$, we find that $\disc(F_\um) = 9m^2-4$.  Since
$\disc(F_\um)\equiv 2 \mod 3$, the form $F_\um$ is irreducible over
$\bQ$.  Therefore it factors as a product
\[
 F_\um(U,T) = m (T-\alpha_\um U) (T-\alphabar_\um U)
\]
where
\begin{equation}
 \label{M:alpha_m}
 \alpha_\um = \frac{2k-3m + \sqrt{9m^2-4}}{2m}
 \et
 \alphabar_\um = \frac{2k-3m - \sqrt{9m^2-4}}{2m}
\end{equation}
are conjugate quadratic real numbers.

In his presentation of Markoff's theory, Cassels also defines
quadratic forms indexed by solutions $\um$ of Markoff's equation,
except that, assuming the uniqueness conjecture, he denotes them
simply $F_m$ where $m$ is the largest entry of $\um$, the conjecture
being that this entry determines uniquely the solution (see
\cite[p.~33]{Ca} or \cite[Appendix B]{Bo}).  In view of the
discussion in \cite[Ch.~II, \S4]{Ca}, the corollary below shows that
the above forms $F_\um$ are equivalent to the corresponding forms
defined by Cassels.

\begin{corollary}
 \label{M:cor}
For each $\um = (m,m_1,m_2) \in \Sigma$, the off-diagonal entry $k$
of $\ux_\um$ satisfies
\begin{equation}
 \label{M:cor:eq}
 k \equiv \frac{m_1}{m_2} \equiv \frac{-m_2}{m_1} \mod m
 \et
 0< k\le m.
\end{equation}
\end{corollary}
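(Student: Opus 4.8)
The plan is to prove the congruence relations by induction on the tree \eqref{M:propStree:tree}, propagating the relevant identities along the edges, and then treat the degenerate nodes $(1,1,1)$ and $(2,1,1)$ directly from the explicit matrices in \eqref{M:ux_um_init}. First I would record the basic algebraic data attached to a node $(\ux,\ux_1,\ux_2)$ of \eqref{M:propStree:tree}: writing $\ux,\ux_1,\ux_2$ as in \eqref{M:propStree:eq2}, the relation $\ux = \ux_1 M\ux_2$ together with $\det\ux=\det\ux_1=\det\ux_2=1$ gives, by comparing upper-left entries, an expression for $m$ in terms of the entries of $\ux_1,\ux_2,M$; more importantly, comparing entries of $\ux_1 = \ux M^{-1}\ux_2^{-1}$ (using $\ux_2^{-1}=\matrice{\ell_2}{-k_2}{-k_2}{m_2}$ and $M^{-1}=\matrice{0}{-1}{1}{3}$) should yield a relation of the form $k_1 \equiv k\,\ell_2 \pmod{m_1}$ or similar. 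The cleanest route is probably to observe that reduction mod $m$ kills the $(1,1)$ entry of $\ux_\um$ appropriately; precisely, from $\ux = \ux_1 M\ux_2$ one gets a congruence linking $k$ to $m_1,m_2$ modulo $m$.

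The key step: from Proposition~\ref{M:propStree}, each non-root node arises as either $(\ux_1 M\ux,\ux_1,\ux)$ (left child) or $(\ux M\ux_2,\ux,\ux_2)$ (right child) of a node $(\ux,\ux_1,\ux_2)$. Consider the left child, whose matrix is $\uw := \ux_1 M\ux$ with upper-left entry $m' = 3m_1 m - m_2$ (from \eqref{M:propStree:eq4}), and whose Markoff triple is $(m', m_1, m)$. Its off-diagonal entry $k'$ satisfies, reading off the $(1,1)$ and $(1,2)$ entries of $\ux_1 M \ux$, a linear relation in $k, k_1, m, m_1$; reducing modulo $m'$ and using $m_2 \equiv 3m_1 m \pmod{m'}$ together with the inductive hypothesis $k \equiv m_1/m_2 \pmod m$ and $k_1 \equiv m/m_2 \cdot(\text{something}) \pmod{m_1}$, one should recover $k' \equiv m_1 / m \pmod{m'}$, which is exactly \eqref{M:cor:eq} for the triple $(m',m_1,m)$. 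The right child is symmetric. For the base of the induction, the root $(5,1,2)$ has $\ux_{(5,1,2)}=\matrice{5}{3}{3}{2}$, and indeed $3 \equiv 1/2 \equiv -2/1 \pmod 5$ since $2\cdot 3 = 6 \equiv 1$ and $3 \equiv -2 \pmod 5$; the degenerate cases are checked from \eqref{M:ux_um_init}: for $(1,1,1)$ and $(2,1,1)$ the congruence mod $m$ is vacuous or immediate, and $0<k\le m$ holds by inspection. The bound $0<k\le m$ for the non-degenerate nodes is already contained in Proposition~\ref{M:propStree} (which gives $\max\{k,\ell\}\le m\le 2k$, hence $0<k\le m$).

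The main obstacle I expect is bookkeeping the three congruences simultaneously: the inductive hypothesis for a node $\um$ must carry enough information about $k$, $k_1$, and $k_2$ (the off-diagonal entries of $\ux_\um,\ux_{\um_1},\ux_{\um_2}$, where $\um_1,\um_2$ are the ancestors/entries paired with $\um$) to deduce the congruence for the child, and one must be careful that $m_1,m_2$ are invertible modulo $m$ — this follows from the Markoff equation \eqref{Markoffeq}, which gives $m_1^2 \equiv -m_2^2 \pmod m$ and, combined with $\gcd(m,m_1)=\gcd(m,m_2)=1$ (again a standard consequence of \eqref{Markoffeq}), shows $m_2$ is a unit mod $m$; the two stated forms $m_1/m_2 \equiv -m_2/m_1 \pmod m$ are then equivalent via $m_1^2 + m_2^2 \equiv 0 \pmod m$. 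Once the invertibility and the single congruence $k \equiv m_1/m_2 \pmod m$ are in hand for every node, the corollary follows, and the remark about agreement with Cassels's forms is immediate from \eqref{M:F_um} and the cited discussion in \cite[Ch.~II, \S4]{Ca}.
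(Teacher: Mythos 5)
Your plan is to prove the congruence by induction along the tree \eqref{M:propStree:tree}, but as written it has a genuine gap at exactly the step that carries the argument. You state the inductive hypothesis as a congruence for $k$ modulo $m$ (and a placeholder ``$k_1 \equiv m/m_2\cdot(\text{something}) \pmod{m_1}$''), and then say that reducing modulo $m'$ ``one should recover $k' \equiv m_1/m \pmod{m'}$.'' But $m$ and $m'$ are coprime (they are distinct entries of a Markoff triple), so a congruence for $k$ modulo $m$ gives no leverage on a congruence modulo $m'$; the inductive hypothesis as you have formulated it cannot be what makes the step go through. The step does go through, but by an exact integer identity, not by reduction of the parent's congruence: writing $\ux' = \ux_1 M\ux$ with $\ux=\matrice{m}{k}{k}{\ell}$, $\ux_1 = \matrice{m_1}{k_1}{k_1}{\ell_1}$ and $M=\matrice{3}{1}{-1}{0}$, one computes $m' = 3m_1m - k_1m + m_1 k$ and $k' = 3m_1k - k_1k + m_1\ell$, and then, using $m\ell = k^2+1$ from $\det\ux = 1$, $k'm = k(3m_1m - k_1m + m_1k) + m_1 = km' + m_1$, whence $k'm \equiv m_1 \pmod{m'}$. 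Notice that nothing about the parent's off-diagonal congruence was used; the ``induction'' is vacuous once you do the algebra, and the placeholder you left is where the actual content should have been.

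The paper's own proof avoids any recursion. Since all three matrices are symmetric, transposing $\ux = \ux_1 M\ux_2$ gives $\ux = \ux_2\tM\ux_1$, hence $\ux\ux_2^{-1} = \ux_1 M$ and $\ux\ux_1^{-1} = \ux_2\tM$. Comparing upper-right entries (using $\ux_i^{-1} = \matrice{\ell_i}{-k_i}{-k_i}{m_i}$ since $\det\ux_i=1$) yields the exact identities $km_2 - mk_2 = m_1$ and $km_1 - mk_1 = -m_2$, from which both congruences in \eqref{M:cor:eq} are immediate. Your peripheral remarks are fine — the bound $0<k\le m$ does come from Proposition~\ref{M:propStree}, the degenerate cases are checked from \eqref{M:ux_um_init}, invertibility of $m_1,m_2$ modulo $m$ and the equivalence of the two forms of the congruence do follow from the Markoff equation — but the central algebraic verification is missing, and the inductive framing obscures the fact that the relation $\ux = \ux_1 M\ux_2$ alone, used symmetrically, already gives the result.
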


Note that the condition \eqref{M:cor:eq} makes sense since each
triple of $\Sigma$ has pairwise relatively prime components
\cite[Ch.~II, \S3, Lemma 5]{Ca}.  It also determines $k$ uniquely.

\begin{proof}
This is readily checked when $\um$ is $(1,1,1)$ or $(2,1,1)$.  Now,
assume that $\um = (m,m_1,m_2)$ is non-degenerate and write the
corresponding triple of symmetric matrices $(\ux,\ux_1,\ux_2)$ in
the form \eqref{M:propStree:eq2}.  Since $\ux=\ux_\um$, this
notation is consistent with \eqref{M:ux_um}. Then, by Proposition
\ref{M:propStree}, we have $0< k\le m$. Since $\ux$, $\ux_1$ and
$\ux_2$ are symmetric, taking the transpose of both sides of the
equality $\ux = \ux_1 M \ux_2$ gives $\ux = \ux_2 \tM \ux_1$, and so
we obtain $\ux\ux_2^{-1}=\ux_1M$ and $\ux\ux_1^{-1}=\ux_2\tM$.
Comparing the upper right entries in the latter matrix equalities,
we find that $km_2-mk_2=m_1$ and $km_1-mk_1 = -m_2$ from which the
requested congruences follow.
\end{proof}

Combining Theorems II and III in Chapter II of \cite{Ca}, we then
recover the following main results of Markoff \cite{Ma,Mb}.

\begin{theorem}[Markoff, 1879-80]
 \label{M:thmM}
The real numbers $\alpha_\um$ with $\um\in\Sigma$ form a system of
representatives of the equivalence classes of real numbers $\xi$
with $\nu(\xi)> 1/3$, while the forms $F_\um$ with $\um \in \Sigma$
constitute a system of representatives of the equivalence classes of
real indefinite binary quadratic forms $F$ with $\mu(F) /
\sqrt{\disc(F)} > 1/3$.  Moreover, for each $\um = (m,m_1,m_2) \in
\Sigma$, the numbers $\alpha_\um$ and $\alphabar_\um$ are equivalent
and we have
\[
\nu(\alpha_\um) = \nu(\alphabar_\um) =
\frac{\mu(F_\um)}{\sqrt{\disc(F_\um)}} = \frac{1}{\sqrt{9-4m^{-2}}}.
\]
\end{theorem}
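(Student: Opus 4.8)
The plan is to \emph{transfer} the statement from Cassels's treatment of Markoff's theory in \cite[Ch.~II]{Ca} rather than reprove it. Theorems~II and~III there attach to each solution $\um\in\Sigma$ of \eqref{Markoffeq} an integral indefinite binary quadratic form, normalized so that its leading coefficient equals $m$ and its other coefficients are pinned down by a prescribed residue modulo $m$, and they assert: (i) every real indefinite binary quadratic form $F$ with $\mu(F)/\sqrt{\disc(F)}>1/3$ is $(\bR^*\times\GL_2(\bZ))$-equivalent to exactly one of these forms; (ii) the form attached to $\um=(m,m_1,m_2)$ realizes the value $1/\sqrt{9-4m^{-2}}$ for the quotient $\mu(F)/\sqrt{\disc(F)}$; and (iii) the analogous statements for real numbers $\xi$ with $\nu(\xi)>1/3$, obtained from the roots of those forms, the point being that on $(1/3,1/\gamma]$ the Lagrange constant of a root of an indefinite form $F$ coincides with $\mu(F)/\sqrt{\disc(F)}$ (see also \cite[Ch.~3]{CF}). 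It therefore suffices to identify our forms $F_\um$ and numbers $\alpha_\um$, $\alphabar_\um$ with Cassels's.

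First I would verify that, for each $\um\in\Sigma$, the form $F_\um$ of \eqref{M:F_um} lies in the same $(\bR^*\times\GL_2(\bZ))$-class as the form Cassels attaches to $\um$. By construction $F_\um$ is integral with leading coefficient $m$ and, since $\det(\ux_\um)=1$, with discriminant $9m^2-4$; and Corollary~\ref{M:cor} shows that its off-diagonal parameter $k$ satisfies $0<k\le m$ and $k\equiv m_1/m_2\pmod m$. In view of the discussion in \cite[Ch.~II, \S4]{Ca}, these are exactly the conditions singling out Cassels's representative, so the two forms agree up to the action of $\bR^*\times\GL_2(\bZ)$; consequently $\mu(F_\um)/\sqrt{\disc(F_\um)}=1/\sqrt{9-4m^{-2}}$. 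Combined with $\disc(F_\um)=9m^2-4$ and the trivial bound $\mu(F_\um)\le F_\um(0,1)=m$, this identity forces $\mu(F_\um)=m$, the minimum being attained at $(U,T)=(0,1)$, as in Markoff's theory. A little care is needed for the two degenerate triples $(1,1,1)$ and $(2,1,1)$ of \eqref{M:ux_um_init}, which give the discriminant-$5$ and discriminant-$8$ forms attached to $\gamma$ and $\sqrt2$ and are sometimes handled apart by Cassels; here formula \eqref{M:F_um} specializes correctly and Corollary~\ref{M:cor} still applies to them.

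Finally I would read off the conclusions about numbers. The factorization $F_\um(U,T)=m(T-\alpha_\um U)(T-\alphabar_\um U)$ exhibits $\alpha_\um$ and $\alphabar_\um$ as the two roots of $F_\um$, so part~(iii) above gives $\nu(\alpha_\um)=\nu(\alphabar_\um)=\mu(F_\um)/\sqrt{\disc(F_\um)}=1/\sqrt{9-4m^{-2}}$. For the equivalence $\alpha_\um\sim\alphabar_\um$: it is part of Markoff's theorem as recorded in \cite[Ch.~II]{Ca} that the period of the continued fraction of a Markoff number is a palindrome (equivalently, that $\ux_\um$ is symmetric), whence $F_\um$ is $\GL_2(\bZ)$-equivalent to its opposite, and this equivalence swaps the two roots. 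To get the two ``system of representatives'' assertions, I would use that a quadratic irrational $\xi$ and any $\GL_2(\bZ)$-equivalent number have $\GL_2(\bZ)$-equivalent associated primitive integral forms (and conversely): completeness then follows because a real $\xi$ with $\nu(\xi)>1/3$ is quadratic, its associated form $F$ has $\mu(F)/\sqrt{\disc(F)}=\nu(\xi)>1/3$, hence $F\sim F_\um$ for some $\um$ by (i), so $\xi$ is equivalent to a root of $F_\um$, i.e. to $\alpha_\um$ (using $\alphabar_\um\sim\alpha_\um$); and irredundancy follows because $\alpha_\um\sim\alpha_{\um'}$ would force $F_\um\sim F_{\um'}$, contradicting the uniqueness in (i). \textbf{Main obstacle.} Granting Markoff's theorem there is no deep difficulty; the one step requiring genuine care is the bookkeeping above, namely checking that our labelling of the symmetric-matrix tree \eqref{M:propStree:tree} by $\um$, the resulting parameter $k$, and the two degenerate cases all match Cassels's conventions, so that ``the form attached to $\um$'' denotes the same object on both sides.
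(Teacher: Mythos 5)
Your proposal takes essentially the same route as the paper. The paper's "proof" is simply the sentence preceding the theorem ("Combining Theorems II and III in Chapter II of \cite{Ca}\dots"), with the identification of the forms $F_\um$ with Cassels's forms already prepared in Corollary~\ref{M:cor} and the paragraph before it; your write-up unpacks exactly this citation-and-matching argument (including the constraint $0<k\le m$, $k\equiv m_1/m_2\pmod m$ from the corollary, the check of the degenerate triples, and the reduction of the number-theoretic statement to the form-theoretic one), so it is correct and in the same spirit.
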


%%%%%%%%%%%%%%%%%%%%%%%%%%%%%%%%%%%%%%%%%%%%%%%%%%%%%%%%%%%%%%%%%%%%
%
%   Extremal numbers
%
%%%%%%%%%%%%%%%%%%%%%%%%%%%%%%%%%%%%%%%%%%%%%%%%%%%%%%%%%%%%%%%%%%%%

\section{Extremal numbers}
\label{sec:ext}

Let $\cP$ denote the set of $2\times 2$ matrices with relatively
prime integer coefficients.  It is a group for the product $*$ given
by $\uy_1*\uy_2 = c^{-1}\uy_1\uy_2$ where $c$ is the greatest
positive common divisor of the coefficients of $\uy_1\uy_2$. This
group contains $\GL_2(\bZ)$ as a subgroup, and its quotient
$\cP/\{\pm I\}$ is isomorphic to $\mathrm{PGL}_2(\bQ)$.  With this
notation, we state the following characterization of extremal
numbers reproduced from \cite[Lemma 3.1]{Rcfrac}, which collects
results from \cite{RcubicI, Rdio}.

\begin{proposition}
 \label{ext:propExt}
Let $\xi$ be an extremal real number.  Then, there exists an
unbounded sequence of symmetric matrices $(\ux_i)_{i\ge 1}$ in $\cP$
such that, for each $i\ge 1$, we have
\begin{equation}
 \label{ext:propExt:eq1}
 \|\ux_{i+1}\| \asymp \|\ux_i\|^\gamma, \quad
 \|(\xi,-1)\ux_i\| \asymp \|\ux_i\|^{-1}
 \et
 |\det \ux_i| \asymp 1,
\end{equation}
with implied constants that are independent of $i$. Such a sequence
$(\ux_i)_{i\ge 1}$ is uniquely determined by $\xi$ up to its first
terms and up to multiplication of each of its terms by $\pm 1$.
Moreover, for any such sequence, there exists a non-symmetric and
non-skew-symmetric matrix $M\in\cP$ such that
\begin{equation}
 \label{ext:propExt:eq2}
 \ux_{i+2}
 = \pm
   \begin{cases}
     \ux_{i+1}*M*\ux_i &\text{if $i$ is odd,}\\
     \ux_{i+1}*\tM*\ux_i &\text{if $i$ is even,}
   \end{cases}
\end{equation}
for any sufficiently large index $i$.  Conversely, if $(\ux_i)_{i\ge
1}$ is an unbounded sequence of symmetric matrices in $\cP$ which
satisfies a recurrence relation of the type \eqref{ext:propExt:eq2}
for some non-symmetric matrix $M\in\cP$, and if
\begin{equation}
 \label{ext:propExt:eq3}
 \|\ux_{i+2}\| \gg \|\ux_{i+1}\|\, \|\ux_i\|
 \et
 |\det \ux_i| \ll 1,
\end{equation}
then $(\ux_i)_{i\ge 1}$ also satisfies the estimates
\eqref{ext:propExt:eq1} for some extremal real number $\xi$.
\end{proposition}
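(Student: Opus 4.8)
The plan is to obtain the statement by assembling the minimal-point analysis of Davenport and Schmidt with the structure theory of extremal numbers developed in \cite{RcubicI,Rdio}; indeed this is precisely \cite[Lemma~3.1]{Rcfrac}, so the proof consists in collecting those ingredients. First I would attach to the extremal number $\xi$ the sequence of \emph{minimal points} $(\up_i)_{i\ge 1}$, with $\up_i=(p_{i,0},p_{i,1},p_{i,2})\in\bZ^3\setminus\{0\}$, of the one-parameter family of convex bodies
\[
 |x_0|\le X,\quad |x_0\xi-x_1|\le X^{-1},\quad |x_0\xi^2-x_2|\le X^{-1}
 \qquad(X\ge 1),
\]
listed so that $X_i:=|p_{i,0}|$ strictly increases, and I would put $L_i:=\max\{|p_{i,0}\xi-p_{i,1}|,\ |p_{i,0}\xi^2-p_{i,2}|\}$. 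After dividing $\up_i$ by the gcd of its entries, the symmetric matrix
\[
 \ux_i=\matrice{p_{i,0}}{p_{i,1}}{p_{i,1}}{p_{i,2}}
\]
lies in $\cP$, and a one-line computation gives $\|\ux_i\|\asymp X_i$, $\|(\xi,-1)\ux_i\|\asymp L_i$ and $|\det\ux_i|\ll X_iL_i$. The extremality of $\xi$ says exactly that the uniform exponent attached to this family of bodies is $1/\gamma$ with matching orders; combined with the gap estimates of \cite[Thm.~1a]{DS} and with the fact (proved in \cite{RcubicI}) that extremal numbers have bounded partial quotients — so that the $\up_i$ cannot lie on a rational conic for large $i$ — this yields $L_i\asymp X_i^{-1}$, $X_{i+1}\asymp X_i^{\gamma}$ and $|\det\ux_i|\asymp 1$, which is \eqref{ext:propExt:eq1}.

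Next I would establish the recurrence \eqref{ext:propExt:eq2}, which I expect to be the main obstacle. Since $\gamma^2=\gamma+1$, the matrix $\ux_{i+2}$ has the same order of magnitude as each of the products $\ux_{i+1}\ux_i$ and $\ux_i\ux_{i+1}$, and all three are close to scalar multiples of the rank-one symmetric matrix $\trans(1,\xi)(1,\xi)$, with transverse scale of order $\|\ux_{i+2}\|^{-1}$ because $|\det\ux_j|\asymp 1$. Using this near-rank-one picture I would produce a matrix $N_i\in\cP$ with bounded entries such that $\ux_{i+2}=\pm\,\ux_{i+1}N_i\ux_i$; since $\cP$ meets any bounded region in only finitely many points, the sequence $(N_i)$ takes finitely many values. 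Taking transposes of this identity and using the symmetry of the $\ux_j$ shows that the reversed transition uses $\trans{N_i}$; tracking how consecutive transitions compose then forces $(N_i)$ to stabilize, to one matrix $M$ on the odd indices and to $\tM$ on the even ones. The matrix $M$ cannot be symmetric, since then the two parity classes of the recurrence would merge and $\xi$ would be forced to satisfy a quadratic equation; that $M$ is also not skew-symmetric is shown in \cite{RcubicI}.

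It remains to treat uniqueness and the converse. Any sequence of symmetric matrices in $\cP$ satisfying \eqref{ext:propExt:eq1} consists, up to sign and a bounded content, of minimal points of the family above, so two such sequences interlace and a routine gap comparison shows they agree beyond their first terms, up to a sign on each term. For the converse, from \eqref{ext:propExt:eq3} together with $\ux_{i+2}=\pm\,\ux_{i+1}(M\text{ or }\tM)\ux_i$ I would deduce $\|\ux_{i+2}\|\asymp\|\ux_{i+1}\|\,\|\ux_i\|$, so that $\log\|\ux_i\|$ obeys a Fibonacci-type recursion and $\|\ux_{i+1}\|\asymp\|\ux_i\|^{\gamma}$; then $|\det\ux_i|\ll 1$ forces $\ux_i/\|\ux_i\|$ to converge to a rank-one symmetric matrix proportional to $\trans(1,\xi)(1,\xi)$, which defines a real number $\xi$, after which $|\det\ux_i|\asymp 1$ pins down $\|(\xi,-1)\ux_i\|\asymp\|\ux_i\|^{-1}$ and the non-symmetry of $M$ prevents $\xi$ from being quadratic. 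Checking that the triples $(p_{i,0},p_{i,1},p_{i,2})$ read off the entries of $\ux_i$ are, up to content, minimal points of the family then shows $\xi$ is extremal. Throughout, the delicate point is the recurrence step — exhibiting the transition matrices $N_i$ and proving that they eventually become constant.
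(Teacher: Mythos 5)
The paper does not prove this proposition; it is stated verbatim as a reproduction of \cite[Lemma~3.1]{Rcfrac}, which itself collects results from \cite{RcubicI,Rdio}, and you correctly observe this. Evaluating your reconstruction on its own merits: the minimal-point setup, the estimates \eqref{ext:propExt:eq1}, and the near--rank-one picture are all plausible, and that picture really does bound the transition matrix $N_i$ in $\ux_{i+2}=\pm\ux_{i+1}N_i\ux_i$, since the orthogonality of $(1,\xi)$ and $(\xi,-1)$ makes the leading terms of $\ux_{i+1}^{-1}\ux_{i+2}\ux_i^{-1}$ cancel, leaving a quantity of size $\|\ux_{i+2}\|/(\|\ux_{i+1}\|\,\|\ux_i\|)\asymp 1$. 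But the step you yourself flag as the main obstacle --- that the $N_i$ eventually alternate between a fixed $M$ and its transpose $\tM$ --- is not addressed: ``tracking how consecutive transitions compose then forces $(N_i)$ to stabilize'' is a hope, not an argument, and once $N_i$ is known to lie in a finite set nothing in your sketch excludes a longer-period or aperiodic pattern. In \cite{RcubicI,Rdio} this stabilization is extracted from the boundedness of the $3\times 3$ determinant $\det(\ux_i,\ux_{i+1},\ux_{i+2})$ (the very quantity reused in Lemma~\ref{conj:lemmaF}) and a subsequent trace analysis; your parenthetical claim that a symmetric $M$ would ``merge the two parity classes'' and force $\xi$ quadratic likewise needs an actual proof.

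There is a second gap in the converse direction. The hypothesis \eqref{ext:propExt:eq3} gives only $|\det\ux_i|\ll 1$, yet you write ``$|\det\ux_i|\asymp 1$ pins down $\|(\xi,-1)\ux_i\|\asymp\|\ux_i\|^{-1}$''; the matching lower bound $|\det\ux_i|\gg 1$ (that is, $\det\ux_i\neq 0$, these being integers) is part of what must be proved, otherwise the normalized matrices $\ux_i/\|\ux_i\|$ could degenerate to rank one without yielding a $\xi$ with the required uniform approximation rate. Similarly, ``checking that the triples read off the entries of $\ux_i$ are, up to content, minimal points of the family'' is the substance of the converse --- one must rule out intermediate integer points that approximate better --- and cannot be dismissed as a routine verification.
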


In the above statement, the choice of a norm for matrices is
secondary since it only affects the implied constants in all
estimates. However, for definiteness, we choose the norm $\|\ux\|$
of a matrix $\ux$ with real coefficients to be the largest absolute
value of its coefficients.  Then, for an extremal number $\xi$ with
a corresponding unbounded sequence of symmetric matrices
$(\ux_i)_{i\ge 1}$ in $\cP$ satisfying \eqref{ext:propExt:eq1}, we
find
\[
 \|(\xi,-1)\ux_i\|
 =
 \max\{ |x_{i,0}\xi-x_{i,1}|,\, |x_{i,1}\xi-x_{i,2}| \}
 \quad
 \textrm{upon writing}
 \quad
 \ux_i=\matrice{x_{i,0}}{x_{i,1}}{x_{i,1}}{x_{i,2}},
\]
and therefore $\xi = \lim_{i\to\infty} x_{i,1}/x_{i,0} =
\lim_{i\to\infty} x_{i,2}/x_{i,1}$.\\

It can be shown directly from the definition that the set of
extremal numbers is stable under the action of $\GL_2(\bQ)$ by
linear fractional transformations on $\bR\setminus\bQ$
\cite[\S2]{Rcfrac}. In particular, it is stable under the action of
the subgroup $\GL_2(\bZ)$.  The next corollary shows how the latter
action affects the corresponding sequences of symmetric matrices
$(\ux_i)_{i\ge 1}$ and the corresponding matrices $M$.

\begin{corollary}
 \label{ext:cor}
Let $\xi$ be an extremal number, let $(\ux_i)_{i\ge 1}$ be an
unbounded sequence of symmetric matrices in $\cP$ satisfying
\eqref{ext:propExt:eq1} and let $M\in\cP$ such that
\eqref{ext:propExt:eq2} holds.  For any $g = \matrice{a}{b}{c}{d}
\in \SL_2(\bZ)$, the number $\xi':=g\cdot\xi$ is also extremal with
corresponding sequence $(\ux'_i)_{i\ge 1}$ and matrix $M'$ given by
\begin{equation}
 \label{ext:cor:eq1}
 \ux_i' = \trans(g')^{-1}\ux_i(g')^{-1}
 \et
 M' = g' M \trans g',
 \quad\text{where}\quad g'=\matrice{a}{-b}{-c}{d}.
\end{equation}
\end{corollary}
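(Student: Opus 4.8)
The overall strategy is to reduce everything to the converse half of Proposition~\ref{ext:propExt}: I would check that the matrices $\ux_i'$ and $M'$ written in the statement are an unbounded sequence of symmetric matrices in $\cP$ obeying a recurrence of type \eqref{ext:propExt:eq2} together with the growth conditions \eqref{ext:propExt:eq3}, and that the real number they approximate is $\xi'=g\cdot\xi$. Set $A=g'=\matrice{a}{-b}{-c}{d}$. Since $\det A=ad-bc=\det g=1$, we have $A\in\SL_2(\bZ)$ and $A^{-1}=\matrice{d}{b}{c}{a}\in\SL_2(\bZ)$, so $\ux_i'=\trans(A^{-1})\ux_i A^{-1}$ is an integer matrix, symmetric because $\ux_i$ is. As left or right multiplication by an element of $\GL_2(\bZ)$ leaves the gcd of the entries of an integer matrix unchanged, each $\ux_i'$ lies in $\cP$; from $\ux_i=\trans A\,\ux_i'\,A$ and $\ux_i'=\trans(A^{-1})\ux_i A^{-1}$ we get $\|\ux_i'\|\asymp\|\ux_i\|$ with constants depending only on $A$, so $(\ux_i')_{i\ge1}$ is unbounded, and $\det\ux_i'=\det\ux_i$.

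Next I would record the identity linking the two linear forms. Using $\xi'=(a\xi+b)/(c\xi+d)$ and $ad-bc=1$, a short computation gives $(\xi',-1)\trans(A^{-1})=(c\xi+d)^{-1}(\xi,-1)$, whence
\[
 (\xi',-1)\,\ux_i' = (c\xi+d)^{-1}\,(\xi,-1)\,\ux_i\,A^{-1},
\]
so $\|(\xi',-1)\ux_i'\|\asymp\|(\xi,-1)\ux_i\|$ because $A^{-1}$ is a fixed invertible matrix. Combined with $\|\ux_i'\|\asymp\|\ux_i\|$ and $|\det\ux_i'|=|\det\ux_i|$, this shows that the three estimates in \eqref{ext:propExt:eq1} hold for $(\ux_i')_{i\ge1}$ with $\xi$ replaced by $\xi'$; and since $\gamma^2=\gamma+1$, they in particular force $\|\ux_{i+2}'\|\asymp\|\ux_{i+1}'\|\,\|\ux_i'\|$ and $|\det\ux_i'|\asymp1$, i.e.\ \eqref{ext:propExt:eq3}.

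It remains to produce the recurrence. Put $M'=AM\trans A=g'M\trans g'$; it lies in $\cP$ by the same gcd argument and is non-symmetric because $M$ is. The step I expect to be the main obstacle is checking that the conjugation $\uy\mapsto\trans(A^{-1})\uy A^{-1}$ is compatible with the primitive product on $\cP$: this map is a content-preserving bijection of $\Mat_2(\bZ)$ and $\bQ$-linear, so it sends the primitive integer matrix in a proportionality class to the primitive integer matrix in the image class; using in addition $A^{-1}A=I$ and $\trans A\,\trans(A^{-1})=I$, one obtains for $\uy_1,\uy_2\in\cP$
\[
 \bigl(\trans(A^{-1})\uy_1 A^{-1}\bigr)*M'*\bigl(\trans(A^{-1})\uy_2 A^{-1}\bigr)
 = \trans(A^{-1})\,(\uy_1*M*\uy_2)\,A^{-1},
\]
together with the same identity with $\tM$ and $\trans M'$ in place of $M$ and $M'$. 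Applying this to \eqref{ext:propExt:eq2} for $(\ux_i)_{i\ge1}$ gives $\ux_{i+2}'=\pm\,\ux_{i+1}'*M'*\ux_i'$ for all large odd $i$ and $\ux_{i+2}'=\pm\,\ux_{i+1}'*\trans M'*\ux_i'$ for all large even $i$, so $(\ux_i')_{i\ge1}$ satisfies a recurrence of type \eqref{ext:propExt:eq2} with the non-symmetric matrix $M'\in\cP$.

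Finally, the converse part of Proposition~\ref{ext:propExt}, applied to $(\ux_i')_{i\ge1}$ and $M'$ (whose hypotheses \eqref{ext:propExt:eq3} were verified above), shows that $(\ux_i')_{i\ge1}$ satisfies \eqref{ext:propExt:eq1} for some extremal number; since \eqref{ext:propExt:eq1} determines that number uniquely and we already checked it holds for $\xi'$, that extremal number is $\xi'$. Hence $\xi'$ is extremal with corresponding sequence $(\ux_i')_{i\ge1}$ and matrix $M'$, as claimed. Everything apart from the primitive-product compatibility is routine bookkeeping with $\GL_2(\bZ)$-invariance and the linear fractional transformation defining $\xi'$.
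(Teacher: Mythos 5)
Your proof is correct and follows essentially the same route as the paper: verify that $(\ux_i')$ and $M'$ inherit symmetry, membership in $\cP$, the growth and determinant estimates, and the recurrence \eqref{ext:propExt:eq2}, then invoke the converse part of Proposition~\ref{ext:propExt} together with uniqueness. You spell out the compatibility of conjugation by $g'$ with the primitive product $*$ and the linear-form identity $(\xi',-1)\trans(g')^{-1}=(c\xi+d)^{-1}(\xi,-1)$, which the paper treats as clear; these computations are correct.
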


\begin{proof}
It is clear that the above matrices $\ux'_i$ and $M'$ belong to
$\cP$ and satisfy the recurrence relation \eqref{ext:propExt:eq2}
instead of $\ux_i$ and $M$. Moreover, the matrices $\ux'_i$ are
symmetric while $M'$ is both non-symmetric and non-skew-symmetric.
We also find that $\|\ux'_i\|\asymp \|\ux_i\|$,
\[
 \|(\xi',-1)\ux'_i\|
 = |c\xi+d|^{-1} \|(\xi,-1)\ux_i(g')^{-1}\|
 \asymp \|(\xi,-1)\ux_i\|,
\]
and $\det(\ux'_i)=\det(\ux_i)$.  Therefore $(\ux'_i)_{i\ge 1}$ and
$\xi'$ also satisfy \eqref{ext:propExt:eq1} instead of
$(\ux_i)_{i\ge 1}$ and $\xi$.  In particular, $(\ux'_i)_{i\ge 1}$
satisfies \eqref{ext:propExt:eq3} and so, by the last part of
Proposition \ref{ext:propExt}, it obeys \eqref{ext:propExt:eq1} for
some extremal number $\xi''$ instead of $\xi$.  This forces
$\xi'=\xi''$, and so $\xi'$ is extremal.
\end{proof}

It follows from Proposition \ref{ext:propExt} that the matrix
$M\in\cP$ attached to an extremal number $\xi$ is uniquely
determined by $\xi$ within the set $\{M,-M,\tM,-\tM\}$.  When the
sequence of symmetric matrices attached to $\xi$ is contained in
$\SL_2(\bZ)$, the matrix $M$ also belongs to $\SL_2(\bZ)$ and the
recurrence relation \eqref{ext:propExt:eq2} can be put in simpler
form. Then, applying an identity of Fricke  like Cohn in \cite{Co}, we
obtain:

\begin{lemma}
 \label{ext:lemma:Fricke}
Let $\xi$ be an extremal number with a corresponding sequence of
symmetric matrices $(\ux_i)_{i\ge 1}$ in $\SL_2(\bZ)$.  Choose
$M\in\SL_2(\bZ)$ and the above sequence so that, for each $i\ge 1$,
we have
\[
 \ux_{i+2} = \ux_{i+1}M_{i+1}\ux_i
 \where
 M_i=
  \begin{cases} M &\text{if $i$ is even,}\\
              \tM &\text{if $i$  is odd.}
  \end{cases}
\]
Then, for each $i\ge 1$, the traces $q_i := \trace(\ux_iM_i) \in \bZ$ satisfy
\begin{equation}
 \label{ext:prop:Fricke:eq1}
 q_{i+2}^2+q_{i+1}^2+q_{i}^2 = q_{i+2}q_{i+1}q_{i} + \trace(\tM\,M^{-1})+2.
\end{equation}
\end{lemma}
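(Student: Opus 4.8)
The plan is to recast the recurrence so that \eqref{ext:prop:Fricke:eq1} becomes an instance of a classical Fricke trace identity. First I would set $A_i = \ux_i M_i \in \SL_2(\bZ)$ for each $i\ge 1$, so that $q_i = \trace(A_i) \in \bZ$, proving the integrality assertion at once. Since $M_i$ depends only on the parity of $i$, we have $M_{i+2}=M_i$, and the hypothesis $\ux_{i+2}=\ux_{i+1}M_{i+1}\ux_i$ then yields the Fibonacci-type relation
\[
 A_{i+2}=\ux_{i+2}M_{i+2}=(\ux_{i+1}M_{i+1}\ux_i)M_i=A_{i+1}A_i .
\]

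Next I would invoke the Fricke identity: for any $P,Q\in\SL_2(\bC)$,
\[
 \trace(P)^2+\trace(Q)^2+\trace(PQ)^2
   =\trace(P)\trace(Q)\trace(PQ)+\trace(PQP^{-1}Q^{-1})+2,
\]
which is a polynomial identity in the matrix entries and is routinely checked. Applying it with $P=A_{i+1}$, $Q=A_i$, so that $PQ=A_{i+2}$, turns the left side into $q_{i+1}^2+q_i^2+q_{i+2}^2$ and the first term on the right into $q_{i+2}q_{i+1}q_i$. Hence \eqref{ext:prop:Fricke:eq1} is equivalent to the assertion that $\trace(A_{i+1}A_iA_{i+1}^{-1}A_i^{-1})=\trace(\tM M^{-1})$ for every $i\ge 1$.

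To evaluate this commutator trace I would exploit the symmetry of the $\ux_i$. Transposing the recurrence and using $\trans\ux_j=\ux_j$ together with $\trans M_{i+1}=M_i=M_{i+2}$ gives the ``mirror'' form $\ux_{i+2}=\ux_i M_i\ux_{i+1}$. Writing $A_{i+1}A_iA_{i+1}^{-1}A_i^{-1}=A_{i+2}\,M_{i+1}^{-1}\ux_{i+1}^{-1}M_i^{-1}\ux_i^{-1}$ and substituting $A_{i+2}=\ux_{i+2}M_i=\ux_i M_i\ux_{i+1}M_i$, the product becomes $\ux_i M_i\ux_{i+1}M_iM_{i+1}^{-1}\ux_{i+1}^{-1}M_i^{-1}\ux_i^{-1}$; cyclic invariance of the trace then cancels $\ux_i^{-1}\ux_i$ and $M_i^{-1}M_i$, leaving $\trace(\ux_{i+1}M_iM_{i+1}^{-1}\ux_{i+1}^{-1})=\trace(M_iM_{i+1}^{-1})$. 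Finally $M_iM_{i+1}^{-1}$ is $\tM M^{-1}$ when $i$ is odd and $M\,\trans(M)^{-1}$ when $i$ is even, and since $\trans(M\,\trans(M)^{-1})=M^{-1}\tM$, both have trace $\trace(\tM M^{-1})$. This finishes the proof.

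All of these computations are routine; the only point demanding care is the parity bookkeeping of where $M$ versus $\tM$ occurs. The one conceptually essential input — the reason the commutator trace is constant in $i$ at all — is the symmetry of the matrices $\ux_i$, which is exactly what makes the two ``halves'' of the commutator transposes of one another up to conjugation, so that the index-dependent factors drop out. (A slightly less computational route to the constancy is to note directly from $A_{i+2}=A_{i+1}A_i$ that $\trace([A_{i+2},A_{i+1}])=\trace([A_{i+1},A_i])$ for all $i$, using $\trace(C)=\trace(C^{-1})$ in $\SL_2$ and conjugation-invariance, thereby reducing to the single case $i=1$; but one still needs the symmetry of $\ux_1,\ux_2$ to evaluate that case.)
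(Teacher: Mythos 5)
Your proof is correct and is essentially the paper's argument: set $A_i=\ux_iM_i$, observe $A_{i+2}=A_{i+1}A_i$, invoke the Fricke trace identity, and then use the symmetry of the $\ux_j$ to evaluate the commutator trace $\trace([A_{i+1},A_i])$ as $\trace(M_iM_{i+1}^{-1})$. The paper reaches the same commutator value slightly more compactly, noting directly that symmetry of $\ux_{i+2}$ gives $A_{i+1}A_i=\tux_{i+2}M_i=A_iA_{i+1}M_{i+1}^{-1}M_i$ and taking traces, rather than expanding via the mirror recurrence $\ux_{i+2}=\ux_iM_i\ux_{i+1}$, but the content is identical.
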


\begin{proof}
In \cite{Fr}, Fricke shows that for any $A, B \in \SL_2(\bR)$ we
have
\[
 \trace(A)^2 + \trace(B)^2 + \trace(AB)^2
 = \trace(A)\trace(B)\trace(AB) +\trace(ABA^{-1}B^{-1}) + 2.
\]
Putting $A=\ux_{i+1}M_{i+1}$ and $B=\ux_iM_i$, the recurrence
relation gives $AB = \ux_{i+2}M_i = \ux_{i+2}M_{i+2}$ and so
$\trace(AB)=q_{i+2}$.  Since $\ux_{i+2}$ is symmetric, we also find
$AB = \tux_{i+2}M_i = \ux_i M_i \ux_{i+1}M_i = BA M_{i+1}^{-1}M_i$
and so $\trace(ABA^{-1}B^{-1}) = \trace(M_{i+1}^{-1}M_i)$.  The
conclusion follows since $\trace(M_{i+1}^{-1}M_i) =
\trace(\tM_{i+2}^{-1}\tM_{i+1}) = \trace(M_{i+2}^{-1}M_{i+1})$ is
independent of $i$.
\end{proof}

We observed in \cite{RcubicII} that the arithmetic of extremal
numbers is particularly simple when the corresponding sequence of
symmetric matrices $(\ux_i)_{i\ge 1}$ is contained in $\GL_2(\bZ)$
and the lower right entry of the corresponding matrix $M$ is $0$.
When all these matrices belong to $\SL_2(\bZ)$, the preceding result
applies and we find:

\begin{lemma}
 \label{ext:lemma:Eu}
Let $u$ be a non-zero integer and let $\cE_u^+$ denote the set of
all extremal numbers with a corresponding sequence of symmetric
matrices $\ux_i = \matrice{x_{i,0}}{x_{i,1}}{x_{i,1}}{x_{i,2}}$ in
$\SL_2(\bZ)$ satisfying, for each $i\ge 1$,
\begin{equation}
 \label{ext:lemma:Eu:eq0}
 \ux_{i+2} = \ux_{i+1}M_{i+1}\ux_i
 \where
 M_i=\matrice{u}{(-1)^{i}}{(-1)^{i+1}}{0}.
\end{equation}
Then, the set $\cE_u^+ = \cE_{-u}^+$ is empty if $u\neq \pm 3$.
Moreover, if $\xi\in\cE_3^+$, then, upon choosing the matrices
$\ux_i$ as above, each triple $(x_{i+2,0},x_{i+1,0},x_{i,0})$ is a
solution of Markoff's equation \eqref{Markoffeq}.
\end{lemma}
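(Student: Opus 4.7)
The plan is to apply Lemma \ref{ext:lemma:Fricke} to convert the matrix recurrence into a scalar Markoff--Hurwitz identity, and then use the $\SL_2(\bZ)$ determinant constraint to eliminate all values of $u$ other than $\pm 3$. Put $M=\matrice{u}{1}{-1}{0}$, so that $M_i=M$ for $i$ even and $M_i=\tM$ for $i$ odd. Short direct calculations give $\det M=1$, $\tM M^{-1}=\matrice{-1}{-2u}{0}{-1}$, and $\trace(\ux_i M_i)=u\,x_{i,0}$ regardless of the parity of $i$. Since $\trace(\tM M^{-1})=-2$, Lemma \ref{ext:lemma:Fricke} applied with $q_i := u\,x_{i,0}$ yields $q_{i+2}^2+q_{i+1}^2+q_i^2 = q_{i+2}q_{i+1}q_i$ for all $i\ge 1$. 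Dividing by $u^2$, this becomes
\[
 x_{i+2,0}^2+x_{i+1,0}^2+x_{i,0}^2 = u\,x_{i+2,0}\,x_{i+1,0}\,x_{i,0}.
\]

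The condition $\det\ux_i=x_{i,0}x_{i,2}-x_{i,1}^2=1$ forces $x_{i,0}\ne 0$ (otherwise $-x_{i,1}^2=1$), so the triple of absolute values $(|x_{i,0}|,|x_{i+1,0}|,|x_{i+2,0}|)$ is a positive integer solution of $a^2+b^2+c^2=|u|\,abc$. By the classical Markoff--Hurwitz theorem in three variables, such an equation admits positive integer solutions only when $|u|\in\{1,3\}$. I would then eliminate $|u|=1$ as follows: every positive integer solution of $a^2+b^2+c^2=abc$ has all three entries divisible by $3$ (the substitution $(a,b,c)=(3a',3b',3c')$ reduces the equation to Markoff's), so $3\mid x_{i,0}$; reducing $x_{i,0}x_{i,2}-x_{i,1}^2=1$ modulo~$3$ would then demand $x_{i,1}^2\equiv -1\pmod{3}$, which is impossible. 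This leaves $u=\pm 3$, and for $u=3$ the displayed identity is exactly Markoff's equation~\eqref{Markoffeq}, which proves the second assertion.

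Finally, to establish $\cE_u^+=\cE_{-u}^+$ I would produce an explicit shift-and-sign substitution. Given $\xi\in\cE_u^+$ with matrices $(\ux_i)$ satisfying \eqref{ext:lemma:Eu:eq0}, set $\ux_i'=\epsilon_i\,\ux_{i+1}$, where $(\epsilon_i)$ is the period-$3$ sign sequence determined by $\epsilon_1=\epsilon_2=1$ and $\epsilon_{i+2}=-\epsilon_{i+1}\epsilon_i$. Using the identity $-M_{i+2}(u)=M_{i+1}(-u)$, which follows from $(-1)^{i+2}=-(-1)^{i+1}$, one checks that $(\ux_i')$ is an unbounded sequence of symmetric matrices in $\SL_2(\bZ)$ attached to $\xi$ and satisfying the analogous recurrence with $M_{i+1}(-u)$ in place of $M_{i+1}(u)$, so that $\xi\in\cE_{-u}^+$; swapping the roles of $u$ and $-u$ gives the reverse inclusion. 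The delicate part of the proof is this last sign bookkeeping, but the substance of the lemma lies in the Fricke computation and the appeal to Markoff--Hurwitz.
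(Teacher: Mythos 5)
Your proof is correct and follows essentially the same route as the paper: both hinge on Lemma~\ref{ext:lemma:Fricke} applied with $M=M_2$, $q_i=\trace(\ux_iM_i)=u\,x_{i,0}$ and $\trace(\tM\,M^{-1})=-2$ to obtain $x_{i+2,0}^2+x_{i+1,0}^2+x_{i,0}^2=u\,x_{i+2,0}x_{i+1,0}x_{i,0}$, and both exploit $\det\ux_i=1$ to conclude that $3\nmid x_{i,0}$. The only divergence is the endgame: the paper reduces the displayed equation modulo $3$ directly to get $3\mid u$ and then uses the pairwise coprimality of the entries of a Markoff triple to force $u/3=\pm1$, whereas you first invoke the classical Markoff--Hurwitz theorem to restrict to $|u|\in\{1,3\}$ and then eliminate $|u|=1$ by the same mod-$3$ observation; both are valid, with yours outsourcing slightly more to a classical result and the paper's being self-contained apart from the coprimality of Markoff triples. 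Your explicit shift-and-sign verification of $\cE_u^+=\cE_{-u}^+$ (which checks out, using $M_{i+1}(-u)=-M_{i+2}(u)$ and the period-three sign sequence) supplies a detail the paper leaves implicit.
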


\begin{proof}
Let $\xi\in\cE_u^+$.  Using the notation of the lemma, a simple
computation shows that the matrix $M := M_2$ satisfies
$\trace(\tM\,M^{-1})=-2$ and that, for each $i\ge 1$, we have
$\trace(\ux_i M_i)=ux_{i,0}$. Therefore, Lemma
\ref{ext:lemma:Fricke} gives
\begin{equation}
 \label{ext:lemma:Eu:eq1}
 x_{i+2,0}^2+x_{i+1,0}^2+x_{i,0}^2 = u\,x_{i+2,0}\,x_{i+1,0}\,x_{i,0}
\end{equation}
for each $i\ge 1$.  Since $-1$ is not a square modulo $3$ and since
$1=\det(\ux_i)\equiv -x_{i,1}^2 \mod x_{i,0}$, we also note that
$x_{i,0}$ is prime to $3$ for each $i\ge 1$.  Then, looking at the
equation \eqref{ext:lemma:Eu:eq1} modulo $3$, we deduce that $u$ is
divisible by $3$ and so, each triple $(u/3)(x_{i+2,0}, x_{i+1,0},
x_{i,0})$ provides a solution of Markoff's equation in integers not
all zero.  Since each such solution has relatively prime entries,
this is possible only if $u=\pm 3$.
\end{proof}

\begin{lemma}
 \label{ext:lemma:equiv}
Two elements $\xi$ and $\xi'$ of\/ $\cE_3^+$ are equivalent (under
$\GL_2(\bZ)$\/) if and only if $\xi'=\pm\, \xi + b$ for some
$b\in\bZ$. Each element of\/ $\cE_3^+$ is equivalent to one and only
one element of\/ $\cE_3^+$ in the open interval $(1/2,1)$.
\end{lemma}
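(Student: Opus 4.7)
The plan is to reduce the $\GL_2(\bZ)$-equivalence question to a computation involving the matrix $M = \matrice{3}{1}{-1}{0}$ that governs $\cE_3^+$, via Corollary \ref{ext:cor} (whose proof extends verbatim from $\SL_2(\bZ)$ to $\GL_2(\bZ)$, since only $\det g = \pm 1$ is used). The first step is to verify that $\cE_3^+$ is stable under $\xi \mapsto \xi + b$ (for $b \in \bZ$) and $\xi \mapsto -\xi$. For the translation, take $g = \matrice{1}{b}{0}{1} \in \SL_2(\bZ)$: a direct calculation gives $g' M \trans g' = M$, so the transformed symmetric sequence supplied by Corollary \ref{ext:cor} is again a valid $\cE_3^+$-sequence for $\xi + b$. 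For $\xi \mapsto -\xi$, take $g = \matrice{-1}{0}{0}{1} \in \GL_2(\bZ)$: one computes $g' M \trans g' = \tM$, and shifting the index of the transformed sequence by one recovers the standard $\cE_3^+$-recurrence, so $-\xi \in \cE_3^+$. Combining, $\pm\xi + b \in \cE_3^+$ whenever $\xi \in \cE_3^+$, which also gives the ``if'' direction of the first assertion.

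For the ``only if'' direction, suppose $\xi, \xi' \in \cE_3^+$ with $\xi' = g \cdot \xi$ for $g = \matrice{a}{b}{c}{d} \in \GL_2(\bZ)$. Corollary \ref{ext:cor} supplies $g' M \trans g'$ as an attached matrix for $\xi'$, and since $\xi' \in \cE_3^+$, Proposition \ref{ext:propExt} forces $g' M \trans g' \in \{M, -M, \tM, -\tM\}$. Taking traces yields $3(a^2 + c^2) \in \{\pm 3\}$, hence $a^2 + c^2 = 1$ and $(a, c) \in \{(\pm 1, 0), (0, \pm 1)\}$. If $c = 0$, the relation $\det g = \pm 1$ forces $d = \pm 1$, so $g$ is upper triangular with diagonal entries in $\{\pm 1\}$; inspection of the four resulting $g$'s yields $\xi' = \pm\xi + b'$ for some $b' \in \bZ$. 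If instead $a = 0$ and $c = \pm 1$, then $b = \mp \det g \in \{\pm 1\}$, and a direct computation gives
\[
 g' M \trans g' = \matrice{0}{-bc}{bc}{3},
\]
whose upper-left entry $0$ is incompatible with every element of $\{M, -M, \tM, -\tM\}$ (all having upper-left entry $\pm 3$). This rules out the second case.

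For the final assertion, given $\xi \in \cE_3^+$, irrationality places $\eta = \xi - \lfloor\xi\rfloor$ in $(0,1) \setminus \{1/2\}$; then either $\eta$ (if $\eta > 1/2$) or $1 - \eta$ (if $\eta < 1/2$) is a representative in $(1/2, 1) \cap \cE_3^+$ by the stability established above. For uniqueness, if $\xi_1, \xi_2 \in (1/2, 1) \cap \cE_3^+$ are equivalent, the first assertion yields either $\xi_2 = \xi_1 + b$ with $|b| < 1/2$, forcing $b = 0$, or $\xi_2 = -\xi_1 + b$ with $b = \xi_1 + \xi_2 \in (1, 2) \cap \bZ = \emptyset$. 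The main technical subtlety is the index shift required to verify stability under $\xi \mapsto -\xi$: the naive transformed sequence satisfies the recurrence with the parities of the $M_i$'s swapped, so one must pass to a tail shifted by one index to recover the canonical $\cE_3^+$-recurrence.
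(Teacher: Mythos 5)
Your proof is correct and takes essentially the same route as the paper: both reduce the question via Corollary \ref{ext:cor} to the condition $g'M\trans g'\in\{\pm M,\pm\tM\}$ and then solve for $g$ by comparing entries (your trace argument and the paper's direct coefficient comparison both force $c=0$ and $a,d=\pm1$). Your extra care in checking the ``if'' direction with explicit matrices, including the index shift needed for $\xi\mapsto-\xi$, is a worthwhile elaboration of a point the paper leaves implicit, but it is not a different method.
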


\begin{proof}
The second assertion follows from the first since, for each
$\xi\in\bR\setminus\bQ$, there is a unique integer $b$ and a unique
choice of sign such that $\pm\,\xi+b \in (1/2,1)$.  To prove the
first assertion, suppose that $\xi \in \cE_3^+$ and let $g =
\matrice{a}{b}{c}{d} \in \GL_2(\bZ)$.  By Corollary \ref{ext:cor},
we have $g\cdot\xi \in \cE_3^+$ if and only if
\[
 \matrice{a}{-b}{-c}{d} \matrice{3}{1}{-1}{0} \matrice{a}{-c}{-b}{d}
 = \epsilon_1 \matrice{3}{\epsilon_2}{-\epsilon_2}{0},
\]
for some choices of $\epsilon_1,\epsilon_2\in\{1,-1\}$. Equating
coefficients, this translates into the conditions $3a^2 =
3\epsilon_1$, $3c^2=0$ and $\det(g)\pm\,3ac = \epsilon_1\epsilon_2$
which mean $a=\epsilon_1=1$, $c=0$, $d=\epsilon_2$ and impose no
restriction on $b$.  For such $a$, $c$ and $d$, we find
$g\cdot\xi=\epsilon_2(\xi+b)$.
\end{proof}

A \emph{zigzag} in the tree \eqref{M:Mtreex} is a sequence of nodes
$\um^{(1)}, \um^{(2)}, \um^{(3)}, \dots$ of that tree such that, for
each $i\ge 1$, the node $\um^{(i+1)}$ is a successor of $\um^{(i)}$
on some side (left or right) and $\um^{(i+2)}$ is a successor of
$\um^{(i+1)}$ on the other side.  A \emph{maximal zigzag} is a
zigzag $\um^{(1)}, \um^{(2)}, \um^{(3)}, \dots$ which cannot be
extended by inserting an ancestor of $\um^{(1)}$ as the first
element.  With the convention that the root $(2,1,1)$ has no
ancestor in \eqref{M:Mtreex}, it follows that each $\um\in\Sigma^*$
is the first element of a unique maximal zigzag. Examples of maximal
zigzags in \eqref{M:Mtreex} are
\[
  \begin{matrix}
  \hspace{100pt} (2,1,1)\\[-3pt]
  \lacc\\[3pt]
  (5,1,2)\hspace{100pt}\\[-3pt]
  \racci\hspace{20pt}\\[3pt]
  \hspace{60pt}(29,5,2)\\[-3pt]
  \hspace{20pt}\laccii \\[3pt]
  (433,5,29) \hspace{20pt}\\[-3pt]
  \racciii\\[3pt]
  \hspace{20pt}\cdots
  \end{matrix}
  ,\,
  \begin{matrix}
  \\[13pt]
  \hspace{100pt}(5,1,2)\\[-3pt]
  \hspace{20pt}\lacci\\[3pt]
  (13,1,5)\hspace{60pt}\\[-3pt]
  \raccii\hspace{20pt} \\[3pt]
  \hspace{20pt}(194,15,5) \\[-3pt]
  \lacciii\\[3pt]
  \cdots \hspace{20pt}
  \end{matrix}
  ,\,\dots
\]
Recall that, in Section \ref{sec:M}, we attached a symmetric matrix
$\ux_\um \in \SL_2(\bZ)$ to each $\um\in\Sigma$.  Thus, each maximal
zigzag in \eqref{M:Mtreex} leads to a sequence of symmetric matrices
in $\SL_2(\bZ)$. We can now state and prove the main result of this
section.

\begin{theorem}
 \label{ext:thm_xi}
Given $\um\in\Sigma^*$, consider the maximal zigzag\/
$\um=\um^{(1)}, \um^{(2)}, \um^{(3)}, \dots$ in the tree
\eqref{M:Mtreex} originating from $\um$. Then
$(\ux_{\um^{(i)}})_{i\ge 1}$ is a sequence of symmetric matrices in
$\SL_2(\bZ)$ corresponding to an extremal number $\xi_\um$ in
$\cE_3^+ \cap(1/2,1)$ and we have
\begin{equation}
 \label{ext:thm_xi:lim}
 \xi_\um
 = \lim_{i\to\infty} \alpha_{\um^{(i)}}
 = \lim_{i\to\infty} (\alphabar_{\um^{(i)}}+3)
\end{equation}
in terms of the quadratic numbers given by \eqref{M:alpha_m}. Each
element of $\cE_3^+$ is equivalent to $\xi_\um$ for one and only one
$\um\in\Sigma^*$.
\end{theorem}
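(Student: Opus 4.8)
The plan is to split the theorem into three independent assertions and handle them in turn: (i) $(\ux_{\um^{(i)}})_{i\ge 1}$ is a sequence of symmetric matrices in $\SL_2(\bZ)$ satisfying a recurrence of the type \eqref{ext:lemma:Eu:eq0} with $u=3$; (ii) this sequence is unbounded and satisfies the growth conditions \eqref{ext:propExt:eq3}, so by Proposition \ref{ext:propExt} it corresponds to an extremal number $\xi_\um$, which (after checking the limit formulas \eqref{ext:thm_xi:lim}) lies in $\cE_3^+\cap(1/2,1)$; (iii) the map $\um\mapsto\xi_\um$ is a bijection from $\Sigma^*$ onto the $\GL_2(\bZ)$-equivalence classes of $\cE_3^+$.

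For (i), first observe that a zigzag alternates sides, so if $\um^{(i+1)}$ is the left successor of $\um^{(i)}$ then $\um^{(i+2)}$ is the right successor of $\um^{(i+1)}$, and vice versa. Reading off the successor rules for the matrix tree \eqref{M:propStree:tree} and using that in each triple $(\ux,\ux_1,\ux_2)$ one has $\ux_1=\ux_{\um'}$ and $\ux_2=\ux_{\um''}$ for the appropriate neighbouring nodes, I would show by induction along the zigzag that three consecutive matrices $\ux_{\um^{(i)}},\ux_{\um^{(i+1)}},\ux_{\um^{(i+2)}}$ satisfy $\ux_{\um^{(i+2)}}=\ux_{\um^{(i+1)}}\,M^{\pm}\,\ux_{\um^{(i)}}$ where $M^{\pm}$ is $M=\matrice{3}{1}{-1}{0}$ or its transpose $\tM=\matrice{3}{-1}{1}{0}$ depending on the parity of $i$ (which side the zigzag turns to at step $i$). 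One must be careful with the ancestor structure near the root $(2,1,1)$ and the special matrices \eqref{M:ux_um_init}: the first two steps of the zigzag should be checked directly against \eqref{M:propStree:tree} and \eqref{M:ux_um_init}. Since all the matrices in the tree and $M$ itself lie in $\SL_2(\bZ)$, this establishes the recurrence \eqref{ext:lemma:Eu:eq0}; the parity of the $(-1)^i$ sign pattern is fixed by choosing the indexing so the first turn matches.

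For (ii), unboundedness is immediate because along the Markoff tree the top-left entries $m^{(i)}$ strictly increase (each successor has strictly larger largest entry), and Proposition \ref{M:propStree} gives $k^{(i)}\le m^{(i)}\le 2k^{(i)}$, so $\|\ux_{\um^{(i)}}\|\asymp m^{(i)}$ tends to infinity. The growth hypothesis \eqref{ext:propExt:eq3} follows from the recurrence $m^{(i+2)}=3m^{(i+1)}m^{(i)}-($smaller entry$)$, which with positivity of all entries yields $m^{(i+2)}\ge m^{(i+1)}m^{(i)}$, hence $\|\ux_{\um^{(i+2)}}\|\gg\|\ux_{\um^{(i+1)}}\|\,\|\ux_{\um^{(i)}}\|$; and $\det\ux_{\um^{(i)}}=1$ is bounded. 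Proposition \ref{ext:propExt} then produces an extremal $\xi_\um$ with $\xi_\um=\lim x_{\um^{(i)},1}/x_{\um^{(i)},0}=\lim k^{(i)}/m^{(i)}$. To identify this limit with $\lim\alpha_{\um^{(i)}}$, use \eqref{M:alpha_m}: $\alpha_{\um^{(i)}}=(2k^{(i)}-3m^{(i)}+\sqrt{9(m^{(i)})^2-4})/(2m^{(i)})$, and as $m^{(i)}\to\infty$ the square root is $3m^{(i)}+O(1/m^{(i)})$, so $\alpha_{\um^{(i)}}-k^{(i)}/m^{(i)}\to 0$; the companion formula for $\alphabar_{\um^{(i)}}+3$ follows the same way. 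That $\xi_\um\in\cE_3^+$ is exactly the content of (i) together with Lemma \ref{ext:lemma:Eu} (the recurrence forces $u=3$ anyway), and $\xi_\um\in(1/2,1)$ follows from $1/2<k^{(i)}/m^{(i)}\le 1$ in the limit — using $k^{(i)}\le m^{(i)}$ from Proposition \ref{M:propStree} for the upper bound, and $m^{(i)}\le 2k^{(i)}$, i.e. $k^{(i)}/m^{(i)}\ge 1/2$, with the inequality becoming strict in the limit because $m^{(i)}<2k^{(i)}$ strictly once $m^{(i)}\ge 2$ (from $m^{(i)}\ell^{(i)}=(k^{(i)})^2+1$ and $\ell^{(i)}\le m^{(i)}$).

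For (iii), injectivity: if $\xi_{\um}$ and $\xi_{\un}$ are $\GL_2(\bZ)$-equivalent, Lemma \ref{ext:lemma:equiv} forces $\xi_{\un}=\pm\xi_{\um}+b$; since both lie in $(1/2,1)$, we get $\xi_{\un}=\xi_{\um}$, and then the two zigzags give the same sequence of symmetric matrices up to its initial terms and signs, hence the same matrix $M$ and eventually the same nodes, so the maximal zigzags coincide and $\um=\un$ (here I use that a maximal zigzag is determined by any of its tails together with the requirement of maximality, plus uniqueness of the maximal zigzag through a given node). Surjectivity is the main obstacle: given $\eta\in\cE_3^+$, by Lemma \ref{ext:lemma:equiv} we may assume $\eta\in(1/2,1)$, with symmetric matrices $\ux_i\in\SL_2(\bZ)$ satisfying \eqref{ext:lemma:Eu:eq0}; by Lemma \ref{ext:lemma:Eu} each $(x_{i+2,0},x_{i+1,0},x_{i,0})$ solves \eqref{Markoffeq}. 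One must show this triple is a triple of pairwise-distinct positive integers (a \emph{non-degenerate} Markoff solution, after possibly discarding finitely many terms and adjusting signs), that it corresponds to a node of \eqref{M:Mtree}, and that consecutive such triples are linked as successive nodes of a zigzag — i.e. the combinatorial recurrence on the $x_{i,0}$ is exactly the zigzag recurrence on Markoff triples. The positivity/sign normalization should come from the growth condition $\|\ux_{i+2}\|\gg\|\ux_{i+1}\|\|\ux_i\|$ forcing the $x_{i,0}$ to grow and hence (via the Markoff relation mod signs) to be eventually of constant sign, which we take positive. Once the tail of $(\ux_i)$ is matched with a tail of some $(\ux_{\um^{(i)}})$, extremality of both together with the uniqueness clause of Proposition \ref{ext:propExt} gives $\eta=\xi_{\um}$ for the $\um\in\Sigma^*$ that heads that maximal zigzag. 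The delicate point is the bookkeeping showing the recurrence \eqref{ext:lemma:Eu:eq0} on matrices is \emph{equivalent} to following a zigzag in the matrix tree \eqref{M:propStree:tree} — that no other symmetric-matrix solutions of $\ux_{i+2}=\ux_{i+1}M_{i+1}\ux_i$ in $\SL_2(\bZ)$ with positive, unboundedly growing top-left entries can occur — which I expect to require Corollary \ref{M:cor} (to pin down the off-diagonal entry $k$ from the triple $\um$) and the constraints $k\le m\le 2k$ of Proposition \ref{M:propStree} to rule out alternatives.
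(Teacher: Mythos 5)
Your plan matches the paper's proof in all essentials: same use of the matrix tree \eqref{M:propStree:tree} to extract the recurrence, same growth estimate $m_{i+2}\ge c\,m_{i+1}m_i$ from $k\le m\le 2k$ to invoke Proposition \ref{ext:propExt}, same invocation of Lemma \ref{ext:lemma:equiv} to reduce to the interval $(1/2,1)$, and same strategy for surjectivity via Lemma \ref{ext:lemma:Eu}, Corollary \ref{M:cor} and the bounds $x_{i,0}/2\le k\le x_{i,0}$ to pin down $k=x_{i,1}$. The only place the paper is tighter than your sketch is in the normalization step: rather than arguing from the growth estimate that the $x_{i,0}$ are eventually of constant sign, it simply adjusts the signs of $\ux_1,\ux_2$ and runs a direct recurrence on \eqref{ext:lemma:Eu:eq0} (using $x_{i,1}/x_{i,0}\in(1/2,1)$) to get $x_{i,0}>\max\{x_{i-1,0},x_{i-2,0}\}>0$, and it records explicitly that the node $\um^{(i+2)}$ is recoverable from the upper-left entries of $\ux_{i+2},\ux_{i+1},\ux_i$, which is what makes both the injectivity and the zigzag identification go through cleanly.
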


\begin{proof}
Let $M=\matrice{3}{1}{-1}{0}$ be as in Proposition \ref{M:propStree}
and let $(\um^{(i)})_{i\ge 1}$ be a maximal zigzag in
\eqref{M:Mtreex} originating from a point $\um=\um^{(1)}$ in
$\Sigma^*$. For simplicity, we simply write $\ux_i$ to denote the
matrix $\ux_{\um^{(i)}}$. If, for some index $i$, the point
$\um^{(i+1)}$ is the left successor of $\um^{(i)}$, then the node of
the tree \eqref{M:propStree:tree} corresponding to $\um^{(i+1)}$
takes the form $(\ux_{i+1},*,\ux_i)$ and, as $\um^{(i+2)}$ is the
right successor of $\um^{(i+1)}$, we find that $\ux_{i+2} =
\ux_{i+1}M\ux_i$. Similarly, if $\um^{(i+1)}$ is the right successor
of $\um^{(i)}$, then the node of \eqref{M:propStree:tree}
corresponding to $\um^{(i+1)}$ takes the form $(\ux_{i+1},\ux_i,*)$
and $\um^{(i+2)}$ is the left successor of $\um^{(i+1)}$, thus
$\ux_{i+2} = \ux_iM\ux_{i+1} = \ux_{i+1}\tM\ux_i$.  As, the parity
of $i$ decides which alternative holds, we deduce that the condition
\eqref{ext:propExt:eq2} of Proposition \ref{ext:propExt} is
satisfied for each $i\ge 1$ with the present choice of $M$ or with
$M$ replaced by its transpose $\tM$.  The above considerations also
show that, for each $i\ge 1$, the node of \eqref{M:propStree:tree}
corresponding to $\um^{(i+2)}$ is either $(\ux_{i+2}, \ux_{i+1},
\ux_i)$ or $(\ux_{i+2}, \ux_i, \ux_{i+1})$ and so $\um^{(i+2)}$ can
be described as the node of the Markoff tree \eqref{M:Mtreex} formed
by the upper left entries of $\ux_{i+2}$, $\ux_{i+1}$ and $\ux_{i}$.

To verify the conditions \eqref{ext:propExt:eq3} of Proposition
\ref{ext:propExt}, we write $\ux_i =
\matrice{m_i}{k_i}{k_i}{\ell_i}$. With this notation, Proposition
\ref{M:propStree} gives $\|\ux_i\| = m_i$ and $k_i\le m_i \le 2k_i$
for each $i\ge 1$.  Thus, if $\ux_{i+2} = \ux_{i+1}M\ux_i$, we find
that
\[
 m_{i+2}
  = (3m_{i+1}-k_{i+1})m_i+m_{i+1}k_i
  \ge (5/2)m_{i+1}m_i.
\]
Otherwise, we have $\ux_{i+2} = \ux_iM\ux_{i+1}$ and the same
computation applies with the indices $i$ and $i+1$ permuted. This
means that $\|\ux_{i+2}\| \ge (5/2)\, \|\ux_{i+1}\|\, \|\ux_i\|$ for
each $i\ge 1$.  Since $\det(\ux_i)=1$ for each $i$, the conditions
\eqref{ext:propExt:eq3} of Proposition \ref{ext:propExt} are
fulfilled and therefore $(\ux_i)_{i\ge 1}$ satisfies the conditions
\eqref{ext:propExt:eq1} of the same proposition for some extremal
number $\xi=\xi_\um$.  We have $\xi_\um\in \cE_3^+$ by definition,
and moreover $\xi_\um = \lim_{i\to\infty} k_i/m_i \in [1/2,1]$. Then
\eqref{ext:thm_xi:lim} follow from the formulas \eqref{M:alpha_m}
and, as $\xi_\um$ is irrational, we conclude that $\xi_\um \in
\cE_3^+ \cap(1/2,1)$. The first assertion of the theorem is proved.

Now assume that $\xi_\um=\xi_\un$ for some $\un\in\Sigma^*$, and let
$(\un^{(i)})_{i\ge 1}$ denote the maximal zigzag starting with
$\un^{(1)}=\un$. Then, $(\ux_{\um^{(i)}})_{i\ge 1}$ and
$(\ux_{\un^{(i)}})_{i\ge 1}$ are two sequences of symmetric matrices
with positive entries corresponding to the same extremal number. By
Proposition \ref{ext:propExt}, this is possible if and only if there
exists an integer $s$ such that $\ux_{\um^{(i)}} =
\ux_{\un^{(i+s)}}$ for each sufficiently large $i$. However, we
observed that, for each $i\ge 1$, the triple $\um^{(i+2)}$ is the
node of \eqref{M:Mtreex} formed by the upper left entries of
$\ux_{\um^{(i+2)}}$, $\ux_{\um^{(i+1)}}$ and $\ux_{\um^{(i)}}$.
Similarly, $\un^{(i+2)}$ is formed by the upper left entries of
$\ux_{\un^{(i+2)}}$, $\ux_{\un^{(i+1)}}$ and $\ux_{\un^{(i)}}$. This
forces $\um^{(i)} = \un^{(i+s)}$ for each sufficiently large $i$ and
therefore $\um=\un$ because each zigzag in \eqref{M:Mtreex} is
contained in a unique maximal zigzag.

Lemma \ref{ext:lemma:equiv} together with the preceding observation
reduce the last assertion of the theorem to proving that each
element of $\cE_3^+ \cap(1/2,1)$ is equal to $\xi_\um$ for some
$\um\in\Sigma^*$.  To this end, we fix a point $\xi \in \cE_3^+
\cap(1/2,1)$ and a corresponding sequence $(\ux_i)_{i\ge 1}$ of
symmetric matrices in $\SL_2(\bZ)$ obeying the recurrence relation
\eqref{ext:lemma:Eu:eq0} of Lemma \ref{ext:lemma:Eu} with $u=3$.
Using the notation of that lemma for the entries of $\ux_i$, we have
$\xi = \lim_{i\to\infty} x_{i,1}/x_{i,0}$. Since $\xi$ belongs to
$(1/2,1)$, the ratio $x_{i,1}/x_{i,0}$ must also belong to that
interval for each sufficiently large integer $i$.  Without loss of
generality, we may assume that this already holds for each $i\ge 1$.
Upon multiplying $\ux_1$ and $\ux_2$ by $\pm1$ and adjusting the
following $\ux_i$ so that \eqref{ext:lemma:Eu:eq0} continues to
hold, we may also assume that $x_{1,0}$ and $x_{2,0}$ are positive.
Then a simple recurrence argument based on \eqref{ext:lemma:Eu:eq0}
shows that $x_{i,0}> \max\{x_{i-1,0},\,x_{i-2,0}\}> 0$ for each
$i\ge 3$. By Lemma \ref{ext:lemma:Eu}, this means that, for each
$i\ge 3$, exactly one of the points $(x_{i,0}, x_{i-1,0},
x_{i-2,0})$ or $(x_{i,0}, x_{i-2,0}, x_{i-1,0})$ is a node
$\um^{(i)}$ of the tree \eqref{M:Mtreex}.  In particular, the
integers $x_{i,0}$, $x_{i-1,0}$, $x_{i-2,0}$ are pairwise relatively
prime.

We claim that $\ux_i = \ux_{\um^{(i)}}$ for each $i\ge 3$. Since the
symmetric matrices $\ux_i$ and $\ux_{\um^{(i)}}$ have the same upper
left entries and the same determinant, this reduces to showing that
the off-diagonal entry $k$ of $\ux_{\um^{(i)}}$ is $x_{i,1}$. In the
notation of Lemma \ref{ext:lemma:Eu} (with $u=3$), we have
$\ux_i\ux_{i-2}^{-1} = \ux_{i-1}M_{i-1}$ which, by comparing the
upper right entries of the matrices on both sides (as in the proof
of Corollary \ref{M:cor}), gives $x_{i,1}x_{i-2,0}-x_{i,0}x_{i-2,1}
= (-1)^{i-1}x_{i-1,0}$ and therefore
\begin{equation}
 \label{ext:thm_xi:eq}
 x_{i,1} \equiv (-1)^{i-1} \frac{x_{i-1,0}}{x_{i-2,0}} \mod x_{i,0}.
\end{equation}
By comparison with the conditions that Corollary \ref{M:cor} imposes
on $k$, this leads to $k\equiv \pm\,x_{i,1} \mod x_{i,0}$.  As
Proposition \ref{M:propStree} gives $x_{i,0}/2\le k\le x_{i,0}$ and
as we know that $x_{i,0}/2< x_{i,1}< x_{i,0}$, we conclude that
$k=x_{i,1}$ and the claim is proved.

Comparing the congruence \eqref{ext:thm_xi:eq} with those of
\eqref{M:cor:eq} shows moreover that, for $i\ge 3$, we have
$\um^{(i)} = (x_{i,0}, x_{i-1,0}, x_{i-2,0})$ if $i$ is odd and
$\um^{(i)} = (x_{i,0}, x_{i-2,0}, x_{i-1,0})$ if $i$ is even. Since
$\um^{(i+1)}$ has two coordinates in common with $\um^{(i)}$ and a
larger first coordinate, this implies that, in the Markoff tree
\eqref{M:Mtreex}, $\um^{(i+1)}$ is the left successor of $\um^{(i)}$
if $i$ is odd, and its right successor if $i$ is even (see
\cite[Ch.~II, \S3]{Ca}). Thus, the sequence $(\um^{(i)})_{i\ge 3}$
is a zigzag in \eqref{M:Mtreex} and $(\ux_{\um^{(i)}})_{i\ge 3}$ is
a sequence of symmetric matrices associated to the extremal number
$\xi$.  We conclude that $\xi=\xi_\um$ where $\um$ is the first
element of the maximal zigzag containing $(\um^{(i)})_{i\ge 3}$.
\end{proof}

The main goal of this paper is to show that the set $\{\xi_\um\,;\,
\um\in\Sigma^*\}$ constitutes a system of representatives of the
equivalence classes of extremal numbers $\xi$ with $\nu(\xi)=1/3$.
By Lemma \ref{ext:lemma:equiv}, we know that they belong to distinct
equivalence classes.  The next step is to show that
$\nu(\xi_\um)=1/3$ for each $\um\in\Sigma^*$. This will be achieved
in \S\ref{sec:reduction}.

%%%%%%%%%%%%%%%%%%%%%%%%%%%%%%%%%%%%%%%%%%%%%%%%%%%%%%%%%%%%%%%%%%%%
%
%   Conjugates of an extremal number
%
%%%%%%%%%%%%%%%%%%%%%%%%%%%%%%%%%%%%%%%%%%%%%%%%%%%%%%%%%%%%%%%%%%%%

\section{Conjugates of an extremal number}
\label{sec:conj}

This section deals with approximation to extremal numbers by
quadratic real numbers, and introduces the notion of conjugates of
an extremal number, a concept which will play an important role in
the sequel.  With respect to notation, we define the \emph{norm}
$\|F\|$ of a polynomial $F$ over $\bR$ to be the largest absolute
value of its coefficients, and we define the \emph{height}
$H(\alpha)$ of an algebraic number $\alpha$ to be the norm of its
minimal polynomial in $\bZ[T]$.

Throughout the section, we fix an arbitrary extremal number $\xi$, a
corresponding unbounded sequence of symmetric matrices
$(\ux_i)_{i\ge 1}$ in $\cP$ satisfying the condition
\eqref{ext:propExt:eq1} of Proposition \ref{ext:propExt}, and a
matrix $M \in \cP$ which is assumed to satisfy
\eqref{ext:propExt:eq2} for each $i\ge 1$ (this condition on the
range of $i$ carries no loss of generality). For each $i\ge 1$, we
write
\[
 J = \matrice{0}{1}{-1}{0},
 \quad
 M = \matrice{a}{b}{c}{d},
 \quad
 \ux_i = \matrice{x_{i,0}}{x_{i,1}}{x_{i,1}}{x_{i,2}}
 \et
 X_i = \|\ux_i\|.
\]
We also define new matrices
\[
 W_i = \ux_i * M_i
 \where
 M_i =
  \begin{cases} M &\text{if $i$ is even,}\\
              \tM &\text{if $i$ is  odd,}
  \end{cases}
\]
and real quadratic forms
\[
 F_i(U,T)
 = -\begin{pmatrix}U &T\end{pmatrix} J W_i
    \begin{pmatrix}U\\ T\end{pmatrix}
 \et
 G_i(U,T)
 = -\begin{pmatrix}U &T\end{pmatrix} J
    \begin{pmatrix}1 &\xi\\ \xi &\xi^2 \end{pmatrix}
    M_i
    \begin{pmatrix}U\\ T\end{pmatrix}
\]

It is clear from the above definition that $G_i$ depends only on the
parity of $i$.  A short computation gives the following formulas.

\begin{lemma}
 \label{conj:lemmaG}
For each integer $i\ge 1$, we have
\begin{equation}
 \label{conj:lemmaG:G'G''}
 G_i(U,T)
  =
  \begin{cases}
  G'(U,T) := (c+d\xi)(T-\xi U)(T-\xi'U)
   &\text{if $i$ is odd,}\\[3pt]
  G''(U,T) := (b+d\xi)(T-\xi U)(T-\xi''U)
   &\text{if $i$ is even,}
  \end{cases}
\end{equation}
where
\begin{equation}
 \label{conj:lemmaG:xi'xi''}
 \xi' = -\frac{a+b\xi}{c+d\xi}
 \et
 \xi'' = -\frac{a+c\xi}{b+d\xi}\, \cdot
\end{equation}
The sets $\{\xi',\xi''\}$ and $\{\pm G',\pm G''\}$ depend only on
$\xi$.  Moreover, $\xi$, $\xi'$ and\/ $\xi''$ are three distinct
extremal numbers.
\end{lemma}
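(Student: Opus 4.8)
The plan is to prove the three assertions in the order stated, with essentially all the work concentrated in the last one.

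\emph{The identities for $G_i$.} Since $M_i$ equals $M$ or $\tM$ according to the parity of $i$, it is immediate that $G_i$ depends only on that parity; write $G'$ for $i$ odd (so $M_i=\tM$) and $G''$ for $i$ even (so $M_i=M$). I would simply multiply out: from $J\matrice{1}{\xi}{\xi}{\xi^2}=\matrice{\xi}{\xi^2}{-1}{-\xi}$ one obtains, after multiplying by $\tM$ on the right and forming the corresponding quadratic form, that $G'(U,T)=(c+d\xi)T^2-\big(\xi(c+d\xi)-(a+b\xi)\big)UT-\xi(a+b\xi)U^2$, which is exactly $(c+d\xi)(T-\xi U)(T-\xi'U)$ once one uses the relation $(c+d\xi)\xi'=-(a+b\xi)$ that defines $\xi'$; the computation producing $G''$ is identical with $b$ and $c$ interchanged. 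Here one should first record that $c+d\xi\neq 0$ and $b+d\xi\neq 0$ — otherwise $\xi$ would be rational, using $\det M\neq 0$ ($M$ is invertible in the group $(\cP,*)$) — so that $\xi'$ and $\xi''$ are well defined.

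\emph{Dependence on $\xi$ alone.} The forms $G',G''$ and the numbers $\xi',\xi''$ are manufactured from $\xi$ and $M$ only: the sequence $(\ux_i)$ does not enter the formulas, and replacing the $\ux_i$ by $\pm\ux_i$ does not affect $M$. By the remark following Proposition \ref{ext:propExt}, the only remaining latitude in the choice of $M$, once $\xi$ is fixed, is to replace it by one of $-M$, $\tM$, $-\tM$ — the transpose accounting for shifting the index of $(\ux_i)$ by an odd integer, which toggles the parity convention. Now $M\mapsto -M$ leaves $\xi',\xi''$ unchanged and sends $(G',G'')$ to $(-G',-G'')$, while $M\mapsto\tM$ interchanges $\xi'$ with $\xi''$ and $G'$ with $G''$; hence both $\{\xi',\xi''\}$ and $\{\pm G',\pm G''\}$ are intrinsic to $\xi$.

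\emph{Three distinct extremal numbers.} Extremality of $\xi'$ and $\xi''$ is quick: $\xi'=(-b\xi-a)/(d\xi+c)$ is the image of $\xi$ under a linear fractional transformation with integer coefficients and determinant $\det M\neq 0$, hence is extremal because the set of extremal numbers is stable under $\GL_2(\bQ)$ \cite[\S2]{Rcfrac}, and likewise for $\xi''$. For distinctness I would rule out in turn each of the three possible coincidences. Clearing denominators, both $\xi=\xi'$ and $\xi=\xi''$ force $d\xi^2+(b+c)\xi+a=0$, while $\xi'=\xi''$ forces $(b-c)\big(d\xi^2+(b+c)\xi+a\big)=0$. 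But $\xi$, being extremal, is neither rational nor quadratic, so it cannot be a root of the polynomial $dT^2+(b+c)T+a\in\bZ[T]$ unless that polynomial vanishes identically, i.e. $a=d=0$ and $c=-b$, which would make $M$ skew-symmetric, contrary to Proposition \ref{ext:propExt}; and the remaining alternative $b=c$ in the third case would make $M$ symmetric, again contrary to Proposition \ref{ext:propExt}. So all three coincidences are impossible. The computations in the first two parts are routine; the crux is this last argument, which goes through precisely because the two forbidden shapes of $M$ it produces — symmetric and skew-symmetric — are exactly the ones excluded by Proposition \ref{ext:propExt}, in combination with the non-quadraticity of extremal numbers.
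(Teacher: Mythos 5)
Your proof is correct and follows essentially the same route as the paper: the identities for $G_i$ come out of the direct matrix computation (which the paper dismisses as ``a short computation'' before the lemma), the dependence on $\xi$ alone is traced to the $\{\pm M,\pm\tM\}$ ambiguity exactly as in the paper, and the distinctness argument reduces all three possible coincidences to the vanishing of $dT^2+(b+c)T+a$ (forcing $M$ skew-symmetric) or to $b=c$ (forcing $M$ symmetric), both excluded by Proposition~\ref{ext:propExt} together with the non-quadraticity of $\xi$. You merely spell out the computational details the paper leaves to the reader.
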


\begin{proof}
The second assertion of the lemma follows from the facts that $M$ is
uniquely determined by $\xi$ within the set $\{\pm M, \pm \tM\}$
(see \S\ref{sec:ext}), and that replacing $M$ by $\pm M$ or by
$\pm\tM$ just permutes the elements of $\{\xi',\xi''\}$ and $\{\pm
G',\pm G''\}$. The real numbers $\xi'$ and $\xi''$ are extremal
because they belong to the $\GL_2(\bQ)$-orbit of $\xi$ (see
\cite[\S2]{Rcfrac}).  Finally, the numbers $\xi$, $\xi'$ and $\xi''$
are distinct because $\xi$ is not quadratic over $\bQ$ and, by
Proposition \ref{ext:propExt}, $M$ is neither symmetric nor
skew-symmetric.
\end{proof}

\begin{definition}
 \label{conj:def}
The extremal numbers $\xi'$ and $\xi''$ given by
\eqref{conj:lemmaG:xi'xi''} are called the \emph{conjugates} of
$\xi$ while the polynomials $G'$ and $G''$ given by
\eqref{conj:lemmaG:G'G''} are called the \emph{real quadratic forms
associated} to $\xi$.
\end{definition}

For example, the extremal numbers $\xi_\um$ constructed by Theorem
\ref{ext:thm_xi} have associated matrix $M=\matrice{3}{1}{-1}{0}$,
and so a short computation gives:

\begin{lemma}
 \label{conj:lemma:xi_um}
For each $\um\in\Sigma^*$, the conjugates of $\xi_\um$ are
$\xi_\um-3$ and $\xi_\um+3$ and its associated quadratic forms are,
up to sign,
\[
 G_\um(U,T):=(T-\xi_\um U)(T-(\xi_\um+3)U)
 \et
 G_\um(U,T+3U).
\]
\end{lemma}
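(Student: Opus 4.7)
The plan is a direct computation: substitute the matrix $M$ attached to $\xi_\um$ into the formulas of Lemma~\ref{conj:lemmaG}. By the construction in Theorem~\ref{ext:thm_xi}, since $\xi_\um\in\cE_3^+$, the associated matrix is $M=\matrice{3}{1}{-1}{0}$, so in the notation of \S\ref{sec:conj} we have $a=3$, $b=1$, $c=-1$, $d=0$. This needs to be cross-checked against the convention of Lemma~\ref{ext:lemma:Eu} (which describes $M_i$ alternately as $\tM$ for odd $i$ and $M$ for even $i$), but the match is immediate.

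First, I would plug into \eqref{conj:lemmaG:xi'xi''}. With $d=0$, the denominators collapse to $c=-1$ and $b=1$, giving
\[
 \xi' = -\frac{3+\xi_\um}{-1} = \xi_\um+3
 \et
 \xi'' = -\frac{3-\xi_\um}{1} = \xi_\um-3,
\]
which already proves the first assertion. For the associated forms, I would use \eqref{conj:lemmaG:G'G''}: since $c+d\xi_\um=-1$ and $b+d\xi_\um=1$, it yields
\[
 G'(U,T) = -(T-\xi_\um U)\bigl(T-(\xi_\um+3)U\bigr) = -G_\um(U,T),
\]
while $G''(U,T)=(T-\xi_\um U)\bigl(T-(\xi_\um-3)U\bigr)$. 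A short expansion then shows
\[
 G_\um(U,T+3U) = \bigl(T+(3-\xi_\um)U\bigr)(T-\xi_\um U) = G''(U,T),
\]
which completes the description of the associated forms up to sign.

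There is no substantive obstacle: the argument is purely a substitution once $M$ is identified. The only subtlety is that Lemma~\ref{conj:lemmaG} records $\{\xi',\xi''\}$ and $\{\pm G',\pm G''\}$ as the invariants attached to $\xi_\um$, so the labels $\xi'$ versus $\xi''$ (and likewise $G'$ versus $G''$) may swap depending on whether one chooses $M$ or $\tM$; this has no bearing on the stated conclusion.
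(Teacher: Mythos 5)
Your computation is correct and is exactly what the paper intends: it identifies $M=\matrice{3}{1}{-1}{0}$ from the construction of $\xi_\um$ and substitutes $a=3$, $b=1$, $c=-1$, $d=0$ into the formulas \eqref{conj:lemmaG:xi'xi''} and \eqref{conj:lemmaG:G'G''}, with the labelling ambiguity between $\xi'$ and $\xi''$ correctly noted as immaterial. The paper itself dismisses this as "a short computation," so your proposal matches its proof in both substance and route.
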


In the computations below, we use the fact that, for any
$A,B\in\cP$, the integer $c$ determined by $A*B = c^{-1}AB$ is a
common divisor of $\det(A)$ and $\det(B)$.  We also use the estimate
$X_{i+1}\asymp X_i^\gamma$ coming from \eqref{ext:propExt:eq1}. The
next lemma relates the forms $F_i$ and $G_i$.

\begin{lemma}
 \label{conj:lemmaFG}
For each $i\ge 1$, there exists a non-zero rational number $r_i$
with $|r_i|\asymp X_i$ such that $F_i = r_i G_i + \cO(X_i^{-1})$.
\end{lemma}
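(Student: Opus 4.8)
The plan is to compute both $F_i$ and $G_i$ explicitly in terms of matrix entries and then compare the two expressions coefficient by coefficient, using the structural relations between $W_i = \ux_i * M_i$ and the rank-one matrix $\smallmatrice{1}{\xi}{\xi}{\xi^2}$. The starting observation is that both quadratic forms are built by the same recipe $-\trans(U,T)\,J\,(\cdot)\,(U,T)$, so it suffices to compare the matrices $W_i$ and $\smallmatrice{1}{\xi}{\xi}{\xi^2}M_i$ up to a scalar. Write $W_i = c_i^{-1}\ux_i M_i$ where $c_i$ is a common divisor of $\det(\ux_i)$ and $\det(M_i)$, hence $|c_i|\asymp 1$ by \eqref{ext:propExt:eq1}. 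The natural candidate is then $r_i^{-1} W_i \approx \smallmatrice{1}{\xi}{\xi}{\xi^2}M_i$ for an appropriate scalar $r_i$, and one reads off what $r_i$ must be.

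First I would isolate the rank-one structure. The estimate $\|(\xi,-1)\ux_i\| \asymp X_i^{-1}$ from \eqref{ext:propExt:eq1}, together with $\xi = \lim x_{i,1}/x_{i,0}$, says precisely that the rows of $\ux_i$ are, after dividing by $x_{i,0}$, both close to $(1,\xi)$ within $\cO(X_i^{-2})$; more to the point, $\ux_i = x_{i,0}\smallmatrice{1}{\xi}{\xi}{\xi^2} + E_i$ where the entries of the error matrix $E_i$ are each $\cO(X_i^{-1})$ (this uses $|\det\ux_i|\asymp 1$ to control $x_{i,2} - x_{i,1}^2/x_{i,0}$, i.e.\ the $(2,2)$-entry, and $|x_{i,0}\xi - x_{i,1}|\asymp X_i^{-1}$ for the off-diagonal). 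Multiplying on the right by $M_i$ (whose entries are $\cO(1)$) and dividing by $c_i$ gives
\[
 W_i = \frac{x_{i,0}}{c_i}\,\begin{pmatrix}1 &\xi\\ \xi &\xi^2\end{pmatrix} M_i + \cO(X_i^{-1}).
\]
Setting $r_i = x_{i,0}/c_i$, we have $r_i\in\bQ^\times$ with $|r_i|\asymp X_i$, and applying the bilinear form $-\trans(U,T)J(\cdot)(U,T)$ — which is Lipschitz in the matrix entries with an absolute constant, once we restrict $(U,T)$ to a bounded set, but in fact the forms are compared as polynomials so the $\cO(X_i^{-1})$ bound passes directly to the coefficients — yields $F_i = r_i G_i + \cO(X_i^{-1})$ as claimed. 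One should double-check that $r_i\neq 0$: this holds because $x_{i,0}\neq 0$, which follows since $\ux_i$ is unbounded with $|\det\ux_i|\asymp 1$ and $\xi$ irrational, forcing $x_{i,0}\to\infty$.

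The main obstacle I anticipate is bookkeeping the error term cleanly: one must verify that $\ux_i - x_{i,0}\smallmatrice{1}{\xi}{\xi}{\xi^2}$ really has all four entries $\cO(X_i^{-1})$, not merely the two entries in one row. The $(1,1)$-entry is $x_{i,0} - x_{i,0}\cdot 1 = 0$; the off-diagonal entry is $x_{i,1} - x_{i,0}\xi = -(x_{i,0}\xi - x_{i,1})$, which is $\cO(X_i^{-1})$ by \eqref{ext:propExt:eq1}; the delicate one is the $(2,2)$-entry $x_{i,2} - x_{i,0}\xi^2$, which we rewrite as $(x_{i,2} - x_{i,1}\xi) + \xi(x_{i,1} - x_{i,0}\xi)$, the first summand being $\pm(x_{i,1}\xi - x_{i,2})$ hence $\cO(X_i^{-1})$ again by the norm estimate $\|(\xi,-1)\ux_i\| = \max\{|x_{i,0}\xi-x_{i,1}|,|x_{i,1}\xi-x_{i,2}|\}$. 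With this in hand the rest is routine, and there is nothing deeper to do: the lemma is essentially the statement that $W_i$, suitably normalized, converges to the singular matrix $\smallmatrice{1}{\xi}{\xi}{\xi^2}M_i$ at the rate $X_i^{-2}$ after normalization, i.e.\ $X_i^{-1}$ before.
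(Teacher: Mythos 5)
Your proof is correct and follows essentially the same path as the paper's: write $W_i = c_i^{-1}\ux_i M_i$ with $|c_i|\asymp 1$, set $r_i = x_{i,0}/c_i$, and bound $\|F_i - r_i G_i\|$ by $\big\|\ux_i - x_{i,0}\matrice{1}{\xi}{\xi}{\xi^2}\big\|\asymp\|(\xi,-1)\ux_i\|\asymp X_i^{-1}$. The only cosmetic difference is that the paper bounds $c_i$ via its being a divisor of the fixed integer $\det(M)$, whereas you use the divisor $\det(\ux_i)$ together with $|\det\ux_i|\asymp 1$; both are legitimate and the entry-by-entry verification you spell out is exactly what the paper's one-line estimate compresses.
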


\begin{proof}
Since $W_i=\ux_i*M_i$, we have $W_i=c_i^{-1}\ux_iM_i$ for some
divisor $c_i$ of $\det(M)$.  Therefore, for $i$ large enough, the
rational number $r_i = x_{i,0}/c_i$ is non-zero and satisfies $|r_i|
\asymp |x_{i,0}| \asymp X_i$ as well as
\[
 \| F_i - r_i G_i \|
 \ll
 \Big\| \ux_i - x_{i,0}\matrice{1}{\xi}{\xi}{\xi^2} \Big\|
 \asymp
 \| (\xi,-1)\ux_i \|
 \asymp X_i^{-1}.
\]
\end{proof}

The next result provides an alternative formula for the forms $F_i$
showing that they are essentially homogenous versions of the
quadratic polynomials of \cite[\S8]{RcubicI}.

\begin{lemma}
 \label{conj:lemmaF}
For each $i\ge 1$, we have
\begin{equation}
 \label{conj:lemmaF:eq1}
 F_i(U,T)
 = \frac{1}{d_i}
   \left| \begin{matrix}
   U^2 &UT &T^2\\
   x_{i+1,0} &x_{i+1,1} &x_{i+1,2}\\
   x_{i+2,0} &x_{i+2,1} &x_{i+2,2}
   \end{matrix} \right|
\end{equation}
where $d_i$ is a divisor of $\det(\ux_{i+1})$.  Moreover the content
of $F_i$ as a polynomial in $\bZ[U,T]$ is bounded above
independently of $i$.
\end{lemma}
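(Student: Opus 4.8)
The plan is to identify $F_i$ directly with the $3\times 3$ determinant on the right by computing both sides as explicit quadratic forms in $U,T$. First I would recall that, by the recurrence \eqref{ext:propExt:eq2}, the matrices $W_i=\ux_i*M_i$ satisfy $\ux_{i+2}=\pm\,W_{i+1}\ux_i$ (reading off the two cases of the parity), so that the rows of $\ux_{i+2}$ are, up to a common rational scalar, linear combinations of the rows of $\ux_i$ with coefficients from $W_{i+1}$; similarly $\ux_{i+1}$ relates to $W_i$ and $\ux_{i-1}$. The cleaner route, however, is to expand the determinant along its first row: writing $\ux_{i+1}=\matrice{x_{i+1,0}}{x_{i+1,1}}{x_{i+1,1}}{x_{i+1,2}}$ and similarly for $\ux_{i+2}$, the coefficient of $U^2$, $UT$, $T^2$ in the determinant are the three $2\times 2$ minors formed from the second and third rows, i.e.\ the entries of the vector obtained by a suitable exterior product of the two rows $(x_{i+1,0},x_{i+1,1},x_{i+1,2})$ and $(x_{i+2,0},x_{i+2,1},x_{i+2,2})$. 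On the other side, $F_i(U,T)=-(U\ T)JW_i\binom{U}{T}$ is, after using $W_i=\ux_i*M_i=c_i^{-1}\ux_iM_i$, an explicit quadratic form whose coefficients are (up to the scalar $c_i^{-1}$) the entries of $J\ux_iM_i$.

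The key step is therefore a linear-algebra identity: the exterior product of the second and third rows of the matrix in \eqref{conj:lemmaF:eq1} equals, up to the rational scalar $1/d_i$, the coefficient vector of $-(U\ T)JW_i\binom UT$. I would prove this by substituting the recurrence: since $\ux_{i+2}=\pm\,(\ux_{i+1}*M_{i+1})\,\ux_i$ and $\ux_{i+1}$ is symmetric of determinant dividing a bounded integer, the $2\times 2$ minors of the $2\times 3$ block $\binom{\text{row }i+1}{\text{row }i+2}$ can be rewritten, via the Cauchy--Binet / adjugate identity for $2\times 2$ matrices ($\ux_{i+1}\adj(\ux_{i+1})=\det(\ux_{i+1})I$), in terms of the single matrix $\ux_{i+1}$ and the fixed matrix $M_{i+1}$. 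Carrying this out shows the minor vector is $\det(\ux_{i+1})$ times the coefficient vector of $F_i$ divided by $c_i$, and absorbing the product of the (bounded) divisors $c_i$ and $\det(\ux_{i+1})$ into a single integer $d_i$ dividing $\det(\ux_{i+1})$ (after cancelling the $c_i$ that already appears in $W_i=c_i^{-1}\ux_iM_i$) yields \eqref{conj:lemmaF:eq1}. I expect this bookkeeping of which bounded divisor divides which to be the main obstacle: one must track $c_i\mid\det(M)$, the divisor coming from $\ux_{i+1}*M_{i+1}$, and $\det(\ux_{i+1})$, and check that after all cancellations the surviving factor $d_i$ genuinely divides $\det(\ux_{i+1})$ as claimed, rather than some larger quantity.

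For the final assertion, that the content of $F_i\in\bZ[U,T]$ is bounded independently of $i$, I would argue as follows. From \eqref{conj:lemmaF:eq1}, $d_i F_i$ has integer coefficients equal to the three $2\times 2$ minors of the $2\times 3$ block with rows $\ux_{i+1}$, $\ux_{i+2}$; any common prime $p$ of all three minors, together with the fact that both rows are nonzero mod $p$ (since e.g.\ $\gcd(x_{j,0},x_{j,1},x_{j,2})=1$ as $\ux_j\in\cP$), forces the two rows to be proportional mod $p$, and then $p$ divides all $2\times 2$ minors of the full $3\times 3$ matrix, in particular $p\mid x_{i+1,0}x_{i+2,1}-x_{i+1,1}x_{i+2,0}$ — but one checks from the recurrence and $\det\ux_j\asymp1$ that such a cross term is bounded, so only finitely many $p$, with bounded multiplicity, can occur. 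Hence $\mathrm{cont}(d_iF_i)$ is bounded, and since $d_i\mid\det(\ux_{i+1})=\cO(1)$ is itself bounded, $\mathrm{cont}(F_i)=\mathrm{cont}(d_iF_i)/d_i$ is bounded independently of $i$.
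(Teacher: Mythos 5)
Your treatment of the determinant identity \eqref{conj:lemmaF:eq1} is essentially the paper's computation in different clothing: expanding along the first row, substituting $\ux_{i+2}=\kappa_i^{-1}W_i\ux_{i+1}$ (note: you should use $W_i*\ux_{i+1}$ rather than $W_{i+1}*\ux_i$, so that the adjugate identity $\ux_{i+1}\adj(\ux_{i+1})=\det(\ux_{i+1})I$ applies cleanly), and cancelling, reproduces the paper's trace-formula argument step for step. The divisor bookkeeping is slightly garbled — the $c_i$ from $W_i=c_i^{-1}\ux_iM_i$ appears on both sides and cancels, and what actually survives is $d_i=\det(\ux_{i+1})/\kappa_i$ where $\kappa_i$ comes from $\ux_{i+2}=W_i*\ux_{i+1}$ — but that is a fixable bookkeeping issue.

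The content assertion is where there is a genuine gap, and the gap is not cosmetic. You claim that the cross term $x_{i+1,0}x_{i+2,1}-x_{i+1,1}x_{i+2,0}$ is bounded; this is false. That quantity is one of the three $2\times 2$ minors of the block, hence (up to sign) $d_i$ times the coefficient of $T^2$ in $F_i$, namely $d_i\,F_i(0,1)$; and $\|F_i\|\asymp X_i\to\infty$ (this is used explicitly in Proposition \ref{conj:prop:alpha} and its proof). Moreover the step ``$p$ divides all $2\times 2$ minors of the full matrix, in particular $p\mid x_{i+1,0}x_{i+2,1}-x_{i+1,1}x_{i+2,0}$'' is circular: that cross term is already one of the three minors you assumed $p$ divides, so you have not derived any constraint on $p$. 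The conceptual omission is that your argument never uses the row $\ux_i$. The paper's proof does exactly that: expanding the $3\times 3$ integer determinant $\det(\ux_i,\ux_{i+1},\ux_{i+2})$ (identifying each symmetric matrix with its triple of entries) along the row $\ux_i$ exhibits it as an integer linear combination of the three minors, so $\gcd(\text{minors})$ divides it; and then the non-trivial input \cite[Thm~5.1]{RcubicI} says precisely that $|\det(\ux_i,\ux_{i+1},\ux_{i+2})|$ is bounded independently of $i$. Without bringing in $\ux_i$ and that cited bound, the boundedness of the content does not follow from the structure of a single recurrence step, and your elementary proportionality-mod-$p$ argument cannot close the gap.
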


\begin{proof}
Thanks to the formulas of \cite[\S2]{RcubicI}, the determinant in
the right hand side of \eqref{conj:lemmaF:eq1} can be rewritten as
\[
 \trace\Big( \begin{pmatrix}U^2 &UT\\ UT &T^2\end{pmatrix}
               J \ux_{i+2} J \ux_{i+1} J \Big),
\]
where the symbol $\trace$ stands for the trace.  Since $\ux_{i+2} =
W_i*\ux_{i+1} = \kappa_i^{-1} W_i\ux_{i+1}$ for some divisor
$\kappa_i$ of $\det(\ux_{i+1})$ and since $\ux_{i+1}J\ux_{i+1}J =
-\det(\ux_{i+1})I$, this expression becomes
\[
  - \frac{\det(\ux_{i+1})}{\kappa_i}
    \trace\Big( \begin{pmatrix}U^2 &UT\\ UT &T^2\end{pmatrix}
               J W_i \Big)
 = \frac{\det(\ux_{i+1})}{\kappa_i} F_i(U,T).
\]
This proves the first assertion. Identifying any symmetric matrix
$\matrice{m}{k}{k}{\ell}$ with the triple $(m,k,\ell)$, the formula
\eqref{conj:lemmaF:eq1} implies that the content of $F_i$ divides
$\det(\ux_i,\ux_{i+1},\ux_{i+2})$.  The second assertion follows
since, by \cite[Thm~5.1]{RcubicI}, the absolute value of this
determinant is bounded above independently of $i$.
\end{proof}

Combining the above lemma with the results of \cite[\S8]{RcubicI},
we obtain:

\begin{proposition}
 \label{conj:prop:alpha}
There exists an integer $i_0\ge 1$ such that, for each $i\ge i_0$,
the polynomial $F_i(U,T)$ is irreducible over $\bQ$ and the root
$\alpha_i$ of $F_i(1,T)$ which is closest to $\xi$ is algebraic over
$\bQ$ of degree $2$ with
\[
  H(\alpha_i) \asymp \|F_i\| \asymp X_i
  \et
  |\xi-\alpha_i| \asymp H(\alpha_i)^{-2\gamma-2}.
\]
Moreover, for each algebraic number $\alpha\in\bC$ of degree $\le 2$
over $\bQ$ with $\alpha\neq \alpha_i$ for each $i\ge i_0$, we have
$|\xi-\alpha| \gg H(\alpha)^{-4}$.
\end{proposition}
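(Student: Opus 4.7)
The plan is to combine Lemmas \ref{conj:lemmaFG} and \ref{conj:lemmaF}---which pin down respectively the analytic and the arithmetic behavior of $F_i$---with the approximation results of \cite[\S8]{RcubicI}. The latter study polynomials built from three consecutive symmetric matrices $\ux_j,\ux_{j+1},\ux_{j+2}$ in exactly the form exhibited by \eqref{conj:lemmaF:eq1}, so after accounting for the bounded factor $d_i$ and the bounded content of $F_i$, the proposition reduces to transcribing the results of that reference.

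Concretely, I would first establish the size and localization of the roots. Lemma \ref{conj:lemmaFG} gives $F_i = r_i G_i + \cO(X_i^{-1})$ with $|r_i|\asymp X_i$; since $G_i\in\{G',G''\}$ is a fixed non-zero quadratic form vanishing at $T=\xi$, this yields $\|F_i\|\asymp X_i$ and the rough estimate $F_i(1,\xi) = \cO(X_i^{-1})$. Moreover $G_i(1,T)$ factors as $\lambda_i(T-\xi)(T-\eta_i)$ with $\eta_i\in\{\xi',\xi''\}$ at a fixed positive distance from $\xi$, so a direct perturbative argument shows that, for $i$ large enough, $F_i(1,T)$ has two distinct real roots $\alpha_i\to\xi$ and $\beta_i\to\eta_i$.

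Next I would sharpen the upper bound on $F_i(1,\xi)$. Evaluating the $3\times 3$ determinant in Lemma \ref{conj:lemmaF} at $(U,T)=(1,\xi)$ and performing the row operations $R_j\to R_j-x_{j,0}R_1$ for $j=i+1,i+2$ reduces the computation to a $2\times 2$ determinant whose entries $x_{j,k}-\xi^k x_{j,0}$ are all $\cO(X_j^{-1})$, by $\|(\xi,-1)\ux_j\|\asymp X_j^{-1}$ together with the telescoping identity $x_{j,2}-\xi^2 x_{j,0}=(x_{j,2}-\xi x_{j,1})+\xi(x_{j,1}-\xi x_{j,0})$. This yields $F_i(1,\xi)=\cO(X_{i+1}^{-1}X_{i+2}^{-1})=\cO(X_i^{-2\gamma-1})$, using $X_{i+1}X_{i+2}\asymp X_i^{\gamma+\gamma^2}=X_i^{2\gamma+1}$. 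Combined with the factorization $F_i(1,T)=A_i(T-\alpha_i)(T-\beta_i)$ satisfying $|A_i|\asymp X_i$ and $|\xi-\beta_i|\asymp 1$, this gives the upper bound $|\xi-\alpha_i|\ll X_i^{-2\gamma-2}$.

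The hard part is the matching lower bound $|\xi-\alpha_i|\gg X_i^{-2\gamma-2}$, the irreducibility of $F_i$ for $i\ge i_0$, and the final assertion that any quadratic $\alpha$ outside the sequence $(\alpha_i)_{i\ge i_0}$ satisfies $|\xi-\alpha|\gg H(\alpha)^{-4}$. All three follow from \cite[\S8]{RcubicI}: the two rows of the $2\times 2$ minor above are $(\xi,-1)\ux_j$ (for $j=i+1,i+2$) right-multiplied by the fixed invertible matrix $\matrice{-1}{-\xi}{0}{-1}$, so the non-degeneracy properties of these error vectors proved in that reference yield both the matching lower bound and the irreducibility, while the final statement is its main approximation theorem. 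Once irreducibility is in place, the bounded content in Lemma \ref{conj:lemmaF} gives $H(\alpha_i)\asymp\|F_i\|\asymp X_i$, completing the proof.
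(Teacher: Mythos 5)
Your proposal is correct and follows essentially the same route as the paper: both identify $d_iF_i(1,T)$, via the determinant formula of Lemma~\ref{conj:lemmaF}, with the polynomial $Q_{i+1}$ studied in \cite[\S 8]{RcubicI}, and then import the irreducibility, the two-sided estimate $|\xi-\alpha_i|\asymp H(\alpha_i)^{-2\gamma-2}$ and the final bound $|\xi-\alpha|\gg H(\alpha)^{-4}$ from Proposition~8.1 and Theorem~8.2 of that reference. The only difference is that you rederive $\|F_i\|\asymp X_i$ and the upper bound $|\xi-\alpha_i|\ll X_i^{-2\gamma-2}$ by hand (correctly), where the paper simply cites them.
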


\begin{proof}
According to \cite[Thm.~8.2]{RcubicI}, the polynomial
$Q_{i+1}(T):=d_iF_i(1,T)$ is irreducible over $\bQ$ for each
sufficiently large $i$.  For those $i$, the quadratic form
$F_i(U,T)$ is irreducible over $\bQ$ and $\alpha_i$ is algebraic
over $\bQ$ of degree $2$.  Moreover, since by Lemma
\ref{conj:lemmaF} the integer $d_i$ and the content of $F_i$ are
bounded, we deduce that $H(\alpha_i) \asymp \|F_i\| \asymp
\|Q_{i+1}\|$.  According to \cite[Prop.~8.1]{RcubicI}, we also have
$\|Q_{i+1}\| \asymp X_i$.  The remaining estimates follow from
\cite[Thm.~8.2]{RcubicI}.
\end{proof}

\begin{definition}
 \label{conj:def:best_app}
In view of the above proposition, the sequence $(\alpha_i)_{i\ge
i_0}$ is uniquely determined by the extremal number $\xi$ up to its
first terms. We refer to it as a sequence of \emph{best quadratic
approximations} to $\xi$.
\end{definition}

The next lemma provides such sequences for the extremal numbers
$\xi_\um$ defined in Theorem \ref{ext:thm_xi}, in terms of the
quadratic numbers $\alpha_\um$ given by \eqref{M:alpha_m}.

\begin{lemma}
 \label{conj:lemma:best_app}
Let $\um\in\Sigma^*$ and let $(\um^{(i)})_{i\ge 1}$ denote the
maximal zigzag in the tree \eqref{M:Mtreex} starting with
$\um^{(1)}=\um$.  Put $r=1$ if $\um^{(2)}$ is the right successor of
$\um^{(1)}$ and $r=0$ otherwise.  Then a sequence $(\alpha_i)_{i\ge
1}$ of best quadratic approximations to $\xi_\um$ is given by
\begin{equation}
 \label{conj:lemma:best_app:eq}
 \alpha_i
  = \begin{cases}
    \alpha_{\um^{(i)}} &\text{if\quad $i\equiv r \mod 2$,}\\
    \alphabar_{\um^{(i)}}+3 &\text{if\quad $i\not\equiv r \mod 2$.}
    \end{cases}
\end{equation}
\end{lemma}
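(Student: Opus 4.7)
The plan is to identify the roots of $F_i(1,T)$ (from Section \ref{sec:conj}) explicitly for the sequence $(\ux_{\um^{(i)}})_{i\ge 1}$ associated to $\xi_\um$, and then apply the closest-root selection of Proposition \ref{conj:prop:alpha}. Since these symmetric matrices lie in $\SL_2(\bZ)$ and, up to the ambiguity $\{\pm M,\pm\tM\}$, the attached matrix $M$ is $\matrice{3}{1}{-1}{0}$, both $\ux_{\um^{(i)}}$ and $M_i$ have determinant $\pm 1$, so $W_i=\ux_{\um^{(i)}}*M_i$ equals the product $\ux_{\um^{(i)}}M_i$. Combining with $-\begin{pmatrix}U & T\end{pmatrix}J=\begin{pmatrix}T & -U\end{pmatrix}$, this gives
\[
F_i(U,T)=\begin{pmatrix}T & -U\end{pmatrix}\ux_{\um^{(i)}}M_i\begin{pmatrix}U\\ T\end{pmatrix},\qquad M_i\in\{M,\tM\}.
\]

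The first task is to pin down the parity convention, which is the trickiest part of the argument. The recurrence established in the proof of Theorem \ref{ext:thm_xi} uses $M$ when $\um^{(i+1)}$ is a left successor of $\um^{(i)}$ and $\tM$ otherwise; since the zigzag forces these events to alternate with $i$, the starting pattern is determined by $r$. Matching this alternation against the convention of Section \ref{sec:conj} (in which $M_i=M$ at even $i$, where $M$ is allowed to be replaced by $\tM$), together with Lemma \ref{ext:lemma:Eu}, forces $M_i=M$ if and only if $i\equiv r\pmod{2}$. With this settled, I would expand $F_i$ in each case: writing $\ux_{\um^{(i)}}=\matrice{m}{k}{k}{\ell}$, the case $M_i=M$ recovers $F_i=F_{\um^{(i)}}$ directly from \eqref{M:F_um}, with roots $\alpha_{\um^{(i)}}$ and $\alphabar_{\um^{(i)}}$ at $U=1$; the case $M_i=\tM$ yields
\[
F_i(U,T)=-mT^2+(3m+2k)TU-(3k+\ell)U^2=-F_{\um^{(i)}}(U,T-3U),
\]
whose discriminant is again $9m^2-4$ thanks to $m\ell-k^2=1$, and whose roots at $U=1$ are therefore $\alpha_{\um^{(i)}}+3$ and $\alphabar_{\um^{(i)}}+3$.

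To select the root closest to $\xi_\um$, I would invoke \eqref{ext:thm_xi:lim}: both $\alpha_{\um^{(i)}}$ and $\alphabar_{\um^{(i)}}+3$ tend to $\xi_\um$, whereas $\alphabar_{\um^{(i)}}=\alpha_{\um^{(i)}}-m^{-1}\sqrt{9m^2-4}$ stays near $\xi_\um-3$ and $\alpha_{\um^{(i)}}+3$ stays near $\xi_\um+3$, each at distance close to $3$ from $\xi_\um\in(1/2,1)$. Hence for all sufficiently large $i$, the root of $F_i(1,T)$ closest to $\xi_\um$ is $\alpha_{\um^{(i)}}$ when $i\equiv r\pmod{2}$ and $\alphabar_{\um^{(i)}}+3$ when $i\not\equiv r\pmod{2}$, which is exactly \eqref{conj:lemma:best_app:eq}. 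The main obstacle is the parity bookkeeping in the first step; once the correspondence between $r$ and the $M_i$ pattern is fixed, the rest of the argument is a direct computation combined with the selection rule of Proposition \ref{conj:prop:alpha}.
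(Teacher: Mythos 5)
Your proof is correct and follows essentially the same route as the paper: set up the recurrence $\ux_{i+2}=\ux_{i+1}M_{i+1}\ux_i$ with the $M_i$-pattern determined by $r$, identify $F_i(1,T)$ with $F_{\um^{(i)}}(1,T)$ or $-F_{\um^{(i)}}(1,T-3)$ according to the parity of $i-r$, and use the limits \eqref{ext:thm_xi:lim} to select the root closest to $\xi_\um$ via Proposition \ref{conj:prop:alpha}. The explicit expansion $-mT^2+(3m+2k)TU-(3k+\ell)U^2$ and the parity bookkeeping both check out against the paper's conventions.
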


\begin{proof}
Define $\ux_i = \ux_{\um^{(i)}}$ for each $i\ge 1$ so that
$(\ux_i)_{i\ge 1}$ is a sequence of symmetric matrices in
$\SL_2(\bZ)$ corresponding to $\xi_\um$ (see Theorem
\ref{ext:thm_xi}).  By virtue of the choice of $r$, the triple
$\um^{(i+1)}$ is a right successor of $\um^{(i)}$ in
\eqref{M:Mtreex} if and only if $i\equiv r \mod 2$.  From this we
deduce that
\[
 \ux_{i+2}
 = \ux_{i+1} \matrice{3}{(-1)^{i-r+1}}{(-1)^{i-r}}{0} \ux_i
\]
for each $i\ge 1$ (same argument as in the first paragraph of the
proof of Theorem \ref{ext:thm_xi}).  Thus, in view of Proposition
\ref{conj:prop:alpha}, it remains simply to show that, for each
sufficiently large $i$, the real number defined by
\eqref{conj:lemma:best_app:eq} is the root of the polynomial
\[
 -\begin{pmatrix}1 &T \end{pmatrix} J \ux_i
  \matrice{3}{(-1)^{i-r}}{(-1)^{i-r-1}}{0}
  \begin{pmatrix}1\\ T \end{pmatrix}
\]
which is closest to $\xi_\um$.  If $i\equiv r$ mod $2$, this
polynomial is simply $F_{\um^{(i)}}(1,T)$ (with the notation of
\eqref{M:F_um}). If $i\not\equiv r$ mod $2$, a short computation
shows that it is equal to $-F_{\um^{(i)}}(1,T-3)$.  The conclusion
follows since the roots of $F_{\um^{(i)}}(1,T)$ are
$\alpha_{\um^{(i)}}$ and $\alphabar_{\um^{(i)}}$ which, according to
\eqref{ext:thm_xi:lim}, converge respectively to $\xi_\um$ and
$\xi_\um-3$ as $i\to\infty$.
\end{proof}

The next result justifies the terminology of Definition
\ref{conj:def}.

\begin{proposition}
 \label{conj:prop:xi'xi''}
Let $(\alpha_i)_{i\ge i_0}$ be as in Proposition
\ref{conj:prop:alpha}.  Then, as $i\to\infty$, we have
\begin{equation}
 \label{conj:prop:xi'xi'':estimates}
  |\xi' - \alphabar_{2i-1}| \asymp H(\alpha_{2i-1})^{-2}
  \et
  |\xi''- \alphabar_{2i}| \asymp H(\alpha_{2i})^{-2}.
\end{equation}
Therefore, the sequence of conjugates of a sequence of best
quadratic approximations to $\xi$ admits exactly two accumulation
points, namely the conjugates $\xi'$ and $\xi''$ of $\xi$.
\end{proposition}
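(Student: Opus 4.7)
The plan is to derive explicit formulas for $F_i(1,\xi)$ and $F_i(1,\xi')$ in terms of the linear forms $\eta_i:=x_{i,1}-\xi x_{i,0}$ and $\eta_i^*:=x_{i,2}-\xi x_{i,1}$, and then chase estimates. I treat the case $i$ odd, so $M_i=\tM$ and one expects $\alphabar_i\to\xi'$; the even case is handled identically with the roles of $\xi'$ and $\xi''$ (and of $\tM,M$) swapped. Starting from $F_i(U,T)=c_i^{-1}(T,-U)\,\ux_i\tM\,(U,T)^t$ (obtained from $(U,T)J=(-T,U)$) and using the defining relations $(c+d\xi)\xi'=-(a+b\xi)$ and $(b+d\xi)\xi''=-(a+c\xi)$ to effect the key algebraic cancellations, a direct computation produces the two identities
\[
 F_i(1,\xi') = c_i^{-1}(b+d\xi')(\xi'\eta_i-\eta_i^*) \et
 F_i(1,\xi)  = c_i^{-1}(b+d\xi)(\xi''\eta_i-\eta_i^*).
\]
The crucial point is that $F_i(1,\xi)$ is controlled by the \emph{transverse} conjugate $\xi''$, not by $\xi'$: this crossover is what drives the proof.

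Factoring $F_i(1,T)=C_i(T-\alpha_i)(T-\alphabar_i)$ with leading coefficient satisfying $|C_i|\asymp\|F_i\|\asymp X_i$, Proposition~\ref{conj:prop:alpha} gives $|\xi-\alpha_i|\asymp X_i^{-2\gamma-2}$, while Lemma~\ref{conj:lemmaFG} (applied to the coefficients of $F_i/r_i \to G'$) forces $\alphabar_i\to\xi'$, so that $|\xi-\alphabar_i|\to|\xi-\xi'|\neq 0$. Multiplying, $|F_i(1,\xi)|\asymp X_i^{-2\gamma-1}$, and the second identity above then yields $\eta_i^*=\xi''\eta_i+O(X_i^{-2\gamma-1})$. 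Next, expanding $\det(\ux_i)=x_{i,0}x_{i,2}-x_{i,1}^2$ in $\eta_i,\eta_i^*$ produces the identity $x_{i,0}(\eta_i^*-\xi\eta_i)=\det(\ux_i)+\eta_i^2$, and $|\det(\ux_i)|\asymp 1$ forces $|\xi\eta_i-\eta_i^*|\asymp X_i^{-1}$. Combined with $\|(\xi,-1)\ux_i\|=\max(|\eta_i|,|\eta_i^*|)\asymp X_i^{-1}$ and the near-alignment $\eta_i^*=\xi''\eta_i+o(X_i^{-1})$, the hypothesis $|\eta_i|=o(X_i^{-1})$ would force $|\eta_i^*|=o(X_i^{-1})$ as well, contradicting the maximum estimate. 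Hence $|\eta_i|\asymp X_i^{-1}$ for all large $i$.

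Finally, $\xi'\eta_i-\eta_i^*=(\xi'-\xi'')\eta_i+(\xi''\eta_i-\eta_i^*)$: the first summand is $\asymp X_i^{-1}$ by the preceding paragraph and $\xi'\neq\xi''$ (Lemma~\ref{conj:lemmaG}), while the second is $o(X_i^{-1})$; hence $|\xi'\eta_i-\eta_i^*|\asymp X_i^{-1}$ and $|F_i(1,\xi')|\asymp X_i^{-1}$ by the first identity. Dividing the factorization $F_i(1,\xi')=C_i(\xi'-\alpha_i)(\xi'-\alphabar_i)$ by $|C_i|\,|\xi'-\alpha_i|\asymp X_i$ then gives
\[
 |\xi'-\alphabar_i|\asymp X_i^{-2}\asymp H(\alpha_i)^{-2},
\]
as required. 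The even-$i$ case is strictly analogous, and the last assertion of the proposition is immediate since each of the subsequences $(\alphabar_{2i-1})$ and $(\alphabar_{2i})$ converges to a single limit. The main obstacle is the derivation of the identity for $F_i(1,\xi)$ in the first paragraph: establishing that the transverse conjugate $\xi''$ (rather than $\xi'$) governs the value of $F_i$ at $\xi$, via the identity $a+c\xi=-(b+d\xi)\xi''$, is what couples the two conjugates quantitatively and permits the sharp height information for $\alpha_i$ to be converted into sharp height information for $\alphabar_i$.
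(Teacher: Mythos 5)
Your proof is correct, and it reaches the two-sided estimate by a genuinely different mechanism than the paper, most notably for the lower bound $|\xi'-\alphabar_i|\gg X_i^{-2}$, which is the delicate half. The paper gets the upper bound by comparing the $UT$-coefficients of $p_i^{-1}F_i$ and $(T-\xi U)(T-\xi'U)$ (so $\alpha_i+\alphabar_i=\xi+\xi'+\cO(X_i^{-2})$), and gets the lower bound from the integrality of the resultant $\Res(F_i,F_{i+2})$: a nonzero integer is at least $1$, and the estimate of the four root differences then forces $|\alphabar_i-\xi'|\gg X_i^{-2}$. You instead obtain both bounds at once from the exact factorizations $F_i(1,\xi')=c_i^{-1}(b+d\xi')(\xi'\eta_i-\eta_i^*)$ and $F_i(1,\xi)=c_i^{-1}(b+d\xi)(\xi''\eta_i-\eta_i^*)$ (which I have checked: they follow from $a+c\xi'=-\xi(b+d\xi')$ and $a+c\xi=-\xi''(b+d\xi)$ respectively), reducing everything to the two-sided bound $|\eta_i|\asymp X_i^{-1}$; that in turn you extract from the hypothesis $\|(\xi,-1)\ux_i\|\asymp X_i^{-1}$, $|\det\ux_i|\asymp 1$ and the near-alignment $\eta_i^*=\xi''\eta_i+\cO(X_i^{-2\gamma-1})$ supplied by the transverse conjugate. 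What your route buys is transparency: it shows that the exponent $-2$ is forced directly by the non-degeneracy of the approximation data in \eqref{ext:propExt:eq1}, with no auxiliary integrality input and no appeal to $F_{i+2}$; what the paper's resultant argument buys is uniformity, since the same device is reused almost verbatim to prove Theorem \ref{conj:thm}. Two small points you should make explicit: the comparison $|C_i|\asymp\|F_i\|$ for the leading coefficient is not automatic for a general polynomial but follows from Lemma \ref{conj:lemmaFG} because the $T^2$-coefficient $c+d\xi$ of $G'$ is nonzero; and $b+d\xi\neq 0$, $b+d\xi'\neq 0$ hold because $(b,d)\neq(0,0)$ (as $\det M\neq 0$) and $\xi$, $\xi'$ are irrational.
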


\begin{proof}
We simply prove \eqref{conj:prop:xi'xi'':estimates} since the second
assertion follows from it. For each $i\ge i_0$, let $p_i :=
F_i(0,1)$ denote the coefficient of $T^2$ in $F_i(U,T)$. If $i\ge
i_0$ is odd, Lemma \ref{conj:lemmaFG} gives
\[
 (T - \alpha_i U)(T - \alphabar_i U)
  = p_i^{-1}F_i(U,T)
  = (T-\xi U)(T-\xi'U) + \cO(X_i^{-2}),
\]
and therefore $\alpha_i+\alphabar_i = \xi+\xi'+\cO(X_i^{-2})$ by
comparing the coefficients of $UT$.  Since Proposition
\ref{conj:prop:alpha} gives $\|F_i\|\asymp X_i$ and $|\alpha_i-\xi|
\asymp X_i^{-2\gamma-2}$, we deduce that $|p_i|\asymp X_i$ and
$|\alphabar_i - \xi'| \ll X_i^{-2}$.  To bound $|\alphabar_i -
\xi'|$ from below, we first note that, since $\xi'\neq \xi$, the
above estimates imply
\[
 |\alpha_i-\alpha_{i+2}|\asymp  X_i^{-2\gamma-2},\quad
 |\alphabar_i-\alpha_{i+2}|\asymp  1,\quad
 |\alpha_i-\alphabar_{i+2}|\asymp  1,
\]
and so the resultant of $F_i$ and $F_{i+2}$ satisfies
\[
 \begin{aligned}
 |\Res(F_i,F_{i+2})|
 &= p_i^2p_{i+2}^2\, |\alpha_i-\alpha_{i+2}|\,
 |\alphabar_i-\alpha_{i+2}|\, |\alpha_i-\alphabar_{i+2}|\,
 |\alphabar_i-\alphabar_{i+2}|\\
 &\ll X_i^2 X_{i+2}^2 X_i^{-2\gamma-2}
     \big(|\alphabar_i - \xi'| + \cO(X_{i+2}^{-2})\big)\\
 &\ll X_i^2 |\alphabar_i - \xi'| + \cO(X_{i+1}^{-2}).
 \end{aligned}
\]
If $i$ is large enough this resultant is a non-zero integer.  Its
absolute value is then bounded below by $1$, and the above estimate
leads to $|\alphabar_i - \xi'| \gg X_i^{-2}$, thus $|\alphabar_i -
\xi'| \asymp X_i^{-2} \asymp H(\alpha_i)^{-2}$. The proof for $i$
even is similar: it suffices to replace everywhere $\xi'$ by
$\xi''$.
\end{proof}

\begin{corollary}
 \label{conj:cor:A.xi}
For each $A\in\GL_2(\bQ)$, the conjugates of $A\cdot\xi$ are
$A\cdot\xi'$ and $A\cdot\xi''$.
\end{corollary}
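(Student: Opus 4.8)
The plan is to reduce to the case $A\in\GL_2(\bZ)$ and then to combine Corollary \ref{ext:cor} with Proposition \ref{conj:prop:xi'xi''}. First I would observe that both sides of the asserted equality only depend on the $\GL_2(\bQ)$-orbit in a compatible way, so it suffices to treat the generators of $\GL_2(\bQ)$; since the full group is generated by $\SL_2(\bZ)$ together with scalar matrices and a single matrix like $\mathrm{diag}(n,1)$, and since scalar matrices act trivially on $\bR\setminus\bQ$, the real work is for $A\in\SL_2(\bZ)$ and for $A=\mathrm{diag}(n,1)$ with $n$ a positive integer. Actually it is cleaner to argue uniformly: for any $A\in\GL_2(\bQ)$, let $\xi^\dagger:=A\cdot\xi$, which is extremal by \cite[\S2]{Rcfrac}, and let $(\ux_i^\dagger)_{i\ge1}$ and $M^\dagger$ be associated data as in Proposition \ref{ext:propExt}.

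The key step is to identify the conjugates of $\xi^\dagger$ via the accumulation-point characterization in Proposition \ref{conj:prop:xi'xi''}, rather than by chasing the formula \eqref{conj:lemmaG:xi'xi''} through the $*$-product. Concretely, I would take a sequence of best quadratic approximations $(\alpha_i)_{i\ge i_0}$ to $\xi$ and show that $(A\cdot\alpha_i)_{i}$, suitably indexed, is a sequence of best quadratic approximations to $\xi^\dagger$. This follows because $A$ is a fixed element of $\GL_2(\bQ)$: it sends an algebraic number of degree $2$ to one of degree $2$, multiplies heights by a bounded factor (bounded in terms of $A$ only), and distorts the distance $|\xi-\alpha|$ only up to bounded multiplicative constants since $\xi$ is irrational and $A$ acts as a fixed Möbius transformation of bounded derivative near $\xi$. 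Hence $A\cdot\alpha_i$ satisfies the estimates of Proposition \ref{conj:prop:alpha} relative to $\xi^\dagger$, so up to finitely many terms $(A\cdot\alpha_i)_i$ is a best quadratic approximation sequence to $\xi^\dagger$ — and by the uniqueness in Definition \ref{conj:def:best_app} it is \emph{the} one, up to its first terms and a possible index shift. Now the conjugate $\overline{A\cdot\alpha_i}$ is exactly $A\cdot\overline{\alpha_i}$, because $A$ has rational entries and therefore commutes with the nontrivial automorphism of $\bQ(\alpha_i)$. Applying Proposition \ref{conj:prop:xi'xi''} to $\xi^\dagger$, its conjugates are the two accumulation points of $(\overline{A\cdot\alpha_i})_i=(A\cdot\overline{\alpha_i})_i$; since $A$ is continuous on $\bR\setminus\bQ$ and $(\overline{\alpha_i})_i$ accumulates exactly at $\xi'$ and $\xi''$, the set of accumulation points of the image is exactly $\{A\cdot\xi',\,A\cdot\xi''\}$. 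This gives the corollary.

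The main obstacle is a bookkeeping issue rather than a conceptual one: Proposition \ref{conj:prop:xi'xi''} assigns $\xi'$ to the accumulation point of the odd-indexed conjugates and $\xi''$ to the even-indexed ones, and the index shift and parity flip introduced when passing from $(\alpha_i)_i$ to the canonical best-approximation sequence of $\xi^\dagger$ may swap the roles of $\xi'$ and $\xi''$. Since the statement only claims the \emph{set} $\{A\cdot\xi',A\cdot\xi''\}$ is the set of conjugates of $A\cdot\xi$ (consistent with the fact that from Lemma \ref{conj:lemmaG} only the unordered pair is well defined), this swap is harmless, and I would simply remark that the two accumulation points of $(A\cdot\overline{\alpha_i})_i$ are $A\cdot\xi'$ and $A\cdot\xi''$ in some order, which is all that is needed.
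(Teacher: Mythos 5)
Your final argument is correct and is essentially the paper's own proof: the paper also fixes a sequence $(\alpha_i)$ of best quadratic approximations to $\xi$, uses the chain $|A\cdot\xi - A\cdot\alpha_i| \asymp |\xi-\alpha_i| \asymp H(\alpha_i)^{-2\gamma-2} \asymp H(A\cdot\alpha_i)^{-2\gamma-2}$ to conclude that $(A\cdot\alpha_i)$ is a sequence of best quadratic approximations to $A\cdot\xi$, and then reads off the conjugates as the accumulation points of $(A\cdot\alphabar_i)$ via Proposition~\ref{conj:prop:xi'xi''}. Your opening detour through generators of $\GL_2(\bQ)$ is abandoned in favour of exactly this uniform argument, and your remark about the possible swap of $\xi'$ and $\xi''$ is harmless since (as Lemma~\ref{conj:lemmaG} records) only the unordered pair $\{\xi',\xi''\}$ is well defined.
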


\begin{proof}
Fix $A\in\GL_2(\bQ)$ and a sequence $(\alpha_i)_{i\ge 1}$ of best
quadratic approximations to $\xi$. Since
\[
 | A\cdot\xi - A\cdot\alpha_i |
   \asymp |\xi-\alpha_i|
   \asymp H(\alpha_i)^{-2\gamma-2}
   \asymp H(A\cdot\alpha_i)^{-2\gamma-2},
\]
we deduce that $(A\cdot\alpha_i)_{i\ge 1}$ is a sequence of best
quadratic approximations to the extremal number $A\cdot\xi$.  Thus
the conjugates of $A\cdot\xi$ are the accumulation points of the
sequence $(A\cdot\alphabar_i)_{i\ge 1}$, namely $A\cdot\xi'$ and
$A\cdot\xi''$.
\end{proof}

Based on this proposition a simple computation gives:

\begin{corollary}
 \label{conj:cor:N.xi}
Let $\disp N = \begin{pmatrix}b &a\\ -d &-c\end{pmatrix}$.  Then we
have $\xi'=N\cdot\xi$ and $\xi''=N^{-1}\cdot\xi$.  Moreover, for
each $i\in \bZ$, the conjugates of $N^i\cdot\xi$ are
$N^{i-1}\cdot\xi$ and $N^{i+1}\cdot\xi$.
\end{corollary}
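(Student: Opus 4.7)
The proof is essentially a direct computation that unpacks the definitions, then invokes Corollary \ref{conj:cor:A.xi} for the iterated statement. Here is the plan.

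First I would verify $\xi' = N\cdot\xi$ by computing the linear fractional transformation directly from the definition \eqref{M:eq:actionGL}. Writing $N = \begin{pmatrix}b & a\\ -d & -c\end{pmatrix}$, one gets $N\cdot\xi = (b\xi+a)/(-d\xi-c) = -(a+b\xi)/(c+d\xi)$, which is exactly $\xi'$ as defined in \eqref{conj:lemmaG:xi'xi''}. For $\xi'' = N^{-1}\cdot\xi$, I would note that $\det(N) = -bc+ad = \det(M) \neq 0$ (since $M\in\cP$ is not skew-symmetric; in particular its determinant is nonzero), so
\[
 N^{-1} = \frac{1}{\det(M)}\begin{pmatrix}-c & -a\\ d & b\end{pmatrix},
\]
and therefore $N^{-1}\cdot\xi = (-c\xi-a)/(d\xi+b) = -(a+c\xi)/(b+d\xi) = \xi''$, again matching \eqref{conj:lemmaG:xi'xi''}.

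For the second assertion, I would apply Corollary \ref{conj:cor:A.xi} with $A = N^i$: the conjugates of $N^i\cdot\xi$ are $N^i\cdot\xi'$ and $N^i\cdot\xi''$. Substituting $\xi' = N\cdot\xi$ and $\xi'' = N^{-1}\cdot\xi$ from the first part and using that the action of $\GL_2(\bQ)$ on $\bR\setminus\bQ$ by linear fractional transformations is a group action, one has $N^i\cdot\xi' = N^{i+1}\cdot\xi$ and $N^i\cdot\xi'' = N^{i-1}\cdot\xi$, as required.

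There is really no obstacle here beyond making sure the sign conventions in the definition of $N$ match exactly with the formulas \eqref{conj:lemmaG:xi'xi''}, and checking that $\det(M)\neq 0$ so that $N^{-1}$ makes sense. Both are immediate: the matrix $N$ is designed precisely to encode $\xi'$ as a linear fractional transform of $\xi$, and Proposition \ref{ext:propExt} together with Lemma \ref{conj:lemmaG} guarantees that $M$ is non-degenerate (neither symmetric nor skew-symmetric, and in particular invertible over $\bQ$).
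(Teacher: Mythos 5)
Your proof is correct and is precisely the ``simple computation'' the paper leaves implicit: you verify $\xi'=N\cdot\xi$ and $\xi''=N^{-1}\cdot\xi$ directly against the formulas \eqref{conj:lemmaG:xi'xi''} and then apply Corollary \ref{conj:cor:A.xi} with $A=N^i$, which is exactly the intended route. One small quibble with a side remark: being neither symmetric nor skew-symmetric does not by itself force $\det(M)\neq 0$ (consider $\matrice{1}{2}{1}{2}$); the invertibility of $M$, hence of $N$, instead comes from the fact that $M$ lies in the group $\cP$, whose quotient $\cP/\{\pm I\}$ is $\mathrm{PGL}_2(\bQ)$, so this does not affect the validity of your argument.
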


In particular, this shows that $\xi$ is one of the two conjugates of
$\xi'$ and also one of the two conjugates of $\xi''$.  Although we
will not need the next result in the sequel, we decided to include
it as it provides an attractive complement to Proposition
\ref{conj:prop:alpha}.

\begin{theorem}
 \label{conj:thm}
Let $(\alpha_i)_{i\ge i_0}$ be as in Proposition
\ref{conj:prop:alpha}. For each $i\ge i_0$, define
\[
 \alpha_i'
 =
 \begin{cases}
  \alpha_i &\text{if $i$ is odd,}\\
  N\cdot\alphabar_i &\text{if $i$ is even,}
 \end{cases}
\]
where $N$ is the integral matrix of Corollary \ref{conj:cor:N.xi},
then,
\begin{equation}
 \label{conj:thm:eq1}
 |\xi-\alpha_i'|\,|\xi'-\alphabar_i'|
 \asymp
 H(\alpha_i')^{-2\gamma-4}.
\end{equation}
For each quadratic or rational number $\alpha\in\bC$ not belonging
to the sequence $(\alpha_i')_{i\ge i_0}$, we have instead
\begin{equation}
\label{conj:thm:eq2}
 |\xi-\alpha|\,|\xi'-\alphabar| \gg H(\alpha)^{-6}
\end{equation}
where $\alphabar$ denotes the conjugate of\/ $\alpha$ over $\bQ$.
\end{theorem}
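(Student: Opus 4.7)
The plan is to establish \eqref{conj:thm:eq1} by multiplying together the estimates already at hand, and to derive \eqref{conj:thm:eq2} through a resultant argument with the integer quadratic forms $F_i$ of Lemma \ref{conj:lemmaF}.

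For \eqref{conj:thm:eq1} with $i$ odd, $\alpha_i'=\alpha_i$, and Propositions \ref{conj:prop:alpha} and \ref{conj:prop:xi'xi''} multiply to give $|\xi-\alpha_i|\,|\xi'-\alphabar_i|\asymp H(\alpha_i)^{-2\gamma-2}\cdot H(\alpha_i)^{-2}=H(\alpha_i')^{-2\gamma-4}$. For $i$ even, $\alpha_i'=N\cdot\alphabar_i$ and, since Galois conjugation commutes with the $\bQ$-rational action of $N$, $\alphabar_i'=N\cdot\alpha_i$. Combining $\xi=N\cdot\xi''$ and $\xi'=N\cdot\xi$ from Corollary \ref{conj:cor:N.xi} with the bi-Lipschitz behaviour of $N$ near any point distinct from its pole $-c/d$ (which differs from each of $\xi,\xi',\xi''$ because $\det M\neq 0$), I transfer the estimates to $|\xi-\alpha_i'|\asymp|\xi''-\alphabar_i|\asymp H(\alpha_i)^{-2}$ and $|\xi'-\alphabar_i'|\asymp|\xi-\alpha_i|\asymp H(\alpha_i)^{-2\gamma-2}$. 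Since $H(\alpha_i')\asymp H(\alpha_i)$, the product is again $H(\alpha_i')^{-2\gamma-4}$.

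For \eqref{conj:thm:eq2}, set $h=H(\alpha)$. The rational case $\alpha=p/q$ is handled quickly by evaluating $F_i$ at $(q,p)$: choose $i$ odd with $X_i\le h<X_{i+2}$, so $F_i(q,p)$ is a nonzero integer (since $\alpha_i,\alphabar_i$ are quadratic), and the approximation $F_i\approx r_iG'$ of Lemma \ref{conj:lemmaFG} forces $|(\xi-\alpha)(\xi'-\alpha)|\gg h^{-3}$, which implies \eqref{conj:thm:eq2} as $\alphabar=\alpha$. In the quadratic case, let $Q_\alpha\in\bZ[U,T]$ be the primitive minimal form, of norm $\asymp h$. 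If $\alpha$ or $\alphabar$ belongs to $\{\alpha_j,\alphabar_j\}_{j\ge i_0}$, the exclusion $\alpha\notin(\alpha_i')$ together with Propositions \ref{conj:prop:alpha}, \ref{conj:prop:xi'xi''} and the pairwise distinctness of $\xi,\xi',\xi''$ forces one of $|\xi-\alpha|$ or $|\xi'-\alphabar|$ to be $\asymp 1$ while the other is $\gg h^{-2\gamma-2}$, which exceeds $h^{-6}$.

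In the remaining generic case, Proposition \ref{conj:prop:alpha} applied to $\xi$ and (via Corollary \ref{conj:cor:A.xi}) to $\xi'$ yields $|\xi-\alpha|,|\xi-\alphabar|,|\xi'-\alpha|,|\xi'-\alphabar|\gg h^{-4}$. Choosing $i$ odd with $X_i$ comparable to $h$, the resultant $\Res(F_i,Q_\alpha)=p_i^2Q_\alpha(1,\alpha_i)Q_\alpha(1,\alphabar_i)\in\bZ\setminus\{0\}$ yields $|Q_\alpha(1,\alpha_i)Q_\alpha(1,\alphabar_i)|\gg X_i^{-2}$. Taylor-expanding with $\alpha_i=\xi+O(X_i^{-2\gamma-2})$, $\alphabar_i=\xi'+O(X_i^{-2})$ and $|Q_\alpha'(1,\cdot)|\ll h$, and verifying that the main terms dominate, I conclude $A^2|\xi-\alpha||\xi-\alphabar||\xi'-\alpha||\xi'-\alphabar|\gg X_i^{-2}$. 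The trivial Vi\`ete bound $|\xi-\alphabar||\xi'-\alpha|\ll h^2/A^2$ (from $|\alpha|,|\alphabar|\ll h/|A|$) then isolates $|\xi-\alpha||\xi'-\alphabar|\gg X_i^{-2}h^{-2}\gg h^{-4}$, well exceeding the required $h^{-6}$. The main obstacle is the error control in this last step: since $X_{i+1}\asymp X_i^\gamma$, the sequence $(X_i)$ is sparse and the choice $X_i\asymp h$ cannot be achieved exactly, while the error term $hX_i^{-2\gamma-2}$ in approximating $Q_\alpha(1,\alpha_i)$ by $Q_\alpha(1,\xi)$ risks swamping the main term $A\delta\delta'$ when $|A|$ or $\delta\delta'$ is small. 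Handling this requires a finer sub-casing using $\max\{\delta,\delta'\}\gg 1/h$ from Vi\`ete (which prevents simultaneous collapse of both main factors) and, when necessary, replacement of $F_i$ by $F_{i+2}$; the trade-off between the range of $X_i$ permitting valid approximation and the range yielding a tight resultant bound ultimately produces the exponent $6$ in place of the $4$ that would arise from the idealised choice $X_i\asymp h$.
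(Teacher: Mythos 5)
Your proof of \eqref{conj:thm:eq1} coincides with the paper's (odd $i$ by multiplying Propositions \ref{conj:prop:alpha} and \ref{conj:prop:xi'xi''}; even $i$ by transporting through $N$ via $\xi'=N\cdot\xi$, $\xi=N\cdot\xi''$), and your resultant strategy for the generic quadratic case of \eqref{conj:thm:eq2} is also the paper's. But the second half has three genuine gaps. First, the rational case: with $X_i\le h$, Lemma \ref{conj:lemmaFG} only gives $F_i(q,p)=r_iG'(q,p)+\cO(X_i^{-1}h^2)$, and the error $\cO(X_i^{-1}h^2)\ge\cO(h)$ swamps the lower bound $|F_i(q,p)|\ge 1$, so nothing follows; if instead you take $X_i\gg h^2$ to control the error, sparsity ($X_i\asymp X_{i-2}^{\gamma^2}$) forces $X_i$ as large as $h^{2\gamma+2}$ and you only get $|(\xi-\alpha)(\xi'-\alpha)|\gg h^{-2\gamma-4}$, which is weaker than $h^{-6}$. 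The paper quotes $|\xi-\alpha|\gg h^{-3}$ and $|\xi'-\alpha|\gg h^{-3}$ from \cite[Thm.~1.3]{RcubicI}; alternatively, since $\alphabar=\alpha$ here, at most one of $|\xi-\alpha|$, $|\xi'-\alpha|$ can be $<|\xi-\xi'|/2$, and the other is $\gg h^{-4}$ by Proposition \ref{conj:prop:alpha}. Second, your case split omits $\alpha=N\cdot\alphabar_j$ with $j$ odd: such $\alpha$ lies neither in $\{\alpha_j,\alphabar_j\}$ nor in $(\alpha_i')$, so it falls into your ``generic case'', yet $\alphabar=N\cdot\alpha_j$ is then a best quadratic approximation to $\xi'=N\cdot\xi$ and the bound $|\xi'-\alphabar|\gg h^{-4}$ you invoke fails ($|\xi'-\alphabar|$ can be $\asymp h^{-2\gamma-2}$). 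The conclusion survives because $|\xi-\alpha|\asymp|\xi''-\alphabar_j|\asymp 1$, but this must be a separate case, as in the paper.

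Third, and most seriously, the quantitative core of the generic case is not carried out. You correctly locate the difficulty (the error $hX_i^{-2\gamma-2}$ can dominate the main term, and sparsity forbids $X_i\asymp h$), but you only assert that ``a finer sub-casing \dots ultimately produces the exponent $6$''. The paper resolves this with a specific dichotomy that your sketch does not contain: starting from $c\le H(\alpha)^2X_i^2\bigl(|\xi-\alpha|+X_i^{-2\gamma-2}\bigr)\bigl(|\xi'-\alphabar|+X_i^{-2}\bigr)$ (obtained by discarding the two cross factors of the resultant and using the triangle inequality, i.e.\ bounding $|\alpha-\alpha_i|$ \emph{above} by $|\xi-\alpha|+\cO(X_i^{-2\gamma-2})$ rather than trying to make the main term dominate), either one of $|\xi-\alpha|$, $|\xi'-\alphabar|$ exceeds $(c/4)H(\alpha)^{-2}$ — in which case the $H(\alpha)^{-4}$ bound on the other factor finishes the proof — or both are smaller, in which case choosing $i$ odd minimal with $H(\alpha)^2X_i^{-2\gamma-2}\le c/4$ yields $c/4\le H(\alpha)^2X_i^2\,|\xi-\alpha|\,|\xi'-\alphabar|$ with $X_i\ll H(\alpha)^{1/\gamma}$, hence $|\xi-\alpha|\,|\xi'-\alphabar|\gg H(\alpha)^{-2\gamma}$. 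Without this (or an equivalent) argument your proof of \eqref{conj:thm:eq2} is incomplete.
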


\begin{proof}
If $i$ is odd, the estimate \eqref{conj:thm:eq1} follows from
Propositions \ref{conj:prop:alpha} and \ref{conj:prop:xi'xi''} since
$\alpha_i' = \alpha_i$ and $\alphabar_i'=\alphabar_i$.  If $i$ is
even, we find
\[
 \begin{aligned}
 |\xi-\alpha_i'|\,|\xi'-\alphabar_i'|
 &= |\xi-N\cdot\alphabar_i|\,|\xi'-N\cdot\alpha_i| \\
 &\asymp |N^{-1}\cdot\xi-\alphabar_i|\,|N^{-1}\cdot\xi'-\alpha_i|
 = |\xi''-\alphabar_i|\,|\xi-\alpha_i|
 \end{aligned}
\]
and \eqref{conj:thm:eq1} again follows from Propositions
\ref{conj:prop:alpha} and \ref{conj:prop:xi'xi''} because
$H(\alpha_i) \asymp H(\alpha_i')$.

To prove the second part of the theorem, we first note that, if
$\alpha = \alpha_i$ for some even integer $i$, then Proposition
\ref{conj:prop:alpha} provides $|\xi-\alpha| \asymp
H(\alpha)^{-2\gamma-2}$ while the estimates of Proposition
\ref{conj:prop:xi'xi''} lead to $|\xi' - \alphabar| \asymp 1$ since
$\xi'\neq \xi''$. Similarly, if $\alpha = N \cdot \alphabar_i$ for
some odd integer $i$, we find $|\xi' - \alphabar| = |N\cdot\xi -
N\cdot\alpha_i| \asymp |\xi - \alpha_i|\asymp
H(\alpha)^{-2\gamma-2}$ and $|\xi-\alpha| \asymp |N^{-1}\cdot\xi -
\alphabar_i| = |\xi'' - \alphabar_i| \asymp 1$. In both cases, this
leads to
\[
 |\xi-\alpha|\,|\xi'-\alphabar|
   \asymp H(\alpha)^{-2\gamma-2}
   \gg H(\alpha)^{-6}.
\]
If $\alpha = \alphabar_i$ for any integer $i\ge i_0$, then we find
instead $|\xi-\alpha| \asymp |\xi'-\alphabar| \asymp 1$ and so
\eqref{conj:thm:eq2} holds again.  The same estimate holds if
$\alpha \in \bQ$ because in that case we have $|\xi-\alpha|\gg
H(\alpha)^{-3}$ and $|\xi'-\alpha|\gg H(\alpha)^{-3}$ by \cite[Thm
1.3]{RcubicI}.  We may therefore assume that $\alpha$ is irrational
and different from $\alpha_i$, $\alphabar_i$ and $N\cdot\alphabar_i$
for each $i\ge i_0$. In this case, Proposition \ref{conj:prop:alpha}
gives
\begin{equation}
\label{conj:thm:eq3}
 |\xi-\alpha|\gg H(\alpha)^{-4}
 \et
 |\xi'-\alphabar| \asymp |\xi-N^{-1}\cdot\alphabar|
                  \gg H(\alpha)^{-4}.
\end{equation}
Let $p$ denote the positive integer for which the polynomial
\[
 F(U,T) := p (T-\alpha U)(T-\alphabar U)
\]
has relatively prime integer coefficients.  Then, $F$ is an
irreducible polynomial of $\bZ[T]$ and, for each $i\ge i_0$, we have
\[
 1 \le |\Res(F,F_i)|
    = p^2 p_i^2 |\alpha-\alpha_i|\, |\alpha-\alphabar_i|\,
    |\alphabar-\alpha_i|\, |\alphabar-\alphabar_i|,
\]
where $p_i = F_i(0,1)$.  Since
\[
\begin{aligned}
 p\, |p_i|\, |\alpha-\alphabar_i|\, |\alphabar-\alpha_i|
 &\le p\, |p_i|
     \big( 2 \max\{1,|\alpha|\} \max\{1,|\alphabar_i|\}\big)
     \big( 2 \max\{1,|\alphabar|\} \max\{1,|\alpha_i|\}\big)\\
 &= 4 \big( p \max\{1,|\alpha|\} \max\{1,|\alphabar|\}\big)
     \big( |p_i| \max\{1,|\alpha_i|\} \max\{1,|\alphabar_i|\}\big)\\
 &\ll H(\alpha)H(\alpha_i),
 \end{aligned}
\]
we deduce that
\[
 1 \ll H(\alpha)^2 H(\alpha_i)^2 |\alpha-\alpha_i|\,
      |\alphabar-\alphabar_i|.
\]
If $i$ is odd, Propositions \ref{conj:prop:alpha} and
\ref{conj:prop:xi'xi''} also give $H(\alpha_i)\asymp X_i$,
$|\alpha-\alpha_i| \le |\xi-\alpha| + \cO(X_i^{-2\gamma-2})$ and
$|\alphabar-\alphabar_i| \le |\xi'-\alphabar| + \cO(X_i^{-2})$.
Combining these estimates, we deduce the existence of a constant
$c>0$ such that
\[
 c \le H(\alpha)^2 X_i^2 \big( |\xi-\alpha| + X_i^{-2\gamma-2} \big)
      \big( |\xi'-\alphabar| + X_i^{-2} \big),
\]
for each odd integer $i$.  If $|\xi-\alpha| \ge (c/4)H(\alpha)^{-2}$
or $|\xi'-\alphabar| \ge (c/4)H(\alpha)^{-2}$, then the required
estimate \eqref{conj:thm:eq2} follows from \eqref{conj:thm:eq3} and
we are done.  Otherwise, we obtain
\[
 \frac{c}{2}
   \le H(\alpha)^2 X_i^{-2\gamma-2}
       + H(\alpha)^2 X_i^2\, |\xi-\alpha|\, |\xi'-\alphabar|.
\]
Choose $i$ to be the smallest positive odd integer such that
$H(\alpha)^2 X_i^{-2\gamma-2} \le c/4$.  Then we have $X_i \ll
H(\alpha)^{1/\gamma}$ and we obtain
\[
 \frac{c}{4}
   \le H(\alpha)^2 X_i^2\, |\xi-\alpha|\, |\xi'-\alphabar|
   \ll H(\alpha)^{2\gamma} |\xi-\alpha|\, |\xi'-\alphabar|,
\]
which is stronger than \eqref{conj:thm:eq2}.
\end{proof}

\begin{remark}
A similar argument shows that Theorem \ref{conj:thm} holds with
$\xi'$ replaced by $\xi''$ and $\alpha'_i$ replaced by $\alpha_i''$
where $\alpha_i'' = \alpha_i$ if $i$ is even, and $\alpha_i'' =
N^{-1}\cdot\alphabar_i$ if $i$ is odd.
\end{remark}

%%%%%%%%%%%%%%%%%%%%%%%%%%%%%%%%%%%%%%%%%%%%%%%%%%%%%%%%%%%%%%%%%%%%
%
%   Minima of the associated real quadratic forms
%
%%%%%%%%%%%%%%%%%%%%%%%%%%%%%%%%%%%%%%%%%%%%%%%%%%%%%%%%%%%%%%%%%%%%

\section{Minima of the associated real quadratic forms}
\label{sec:reduction}

We keep the notation of the preceding section.  In particular we
deal with a fixed arbitrary extremal number $\xi$ with conjugates
$\xi'$ and $\xi''$ and associated quadratic forms $G'$ and $G''$.
The main result of this section is that $\nu(\xi) =
\mu(G')/\sqrt{\disc(G')} = \mu(G'')/\sqrt{\disc(G'')}$.  We will
deduce from this that the extremal numbers $\xi_\um$
($\um\in\Sigma^*$) constructed by Theorem \ref{ext:thm_xi} have
Lagrange constant $\nu(\xi_\um) = 1/3$.  The proof goes through a
series of lemmas.

\begin{lemma}
 \label{red:lemmad}
Let $d$ denote the least common multiple of all integers $\det(W_i)$
with $i\ge 1$.  Suppose that $W_i\equiv W_j \mod 4d$ for some
indices $i,j\ge 1$.  Then, we have $W_jW_i^{-1} \in\SL_2(\bZ)$.
\end{lemma}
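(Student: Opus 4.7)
The plan is to decompose the conclusion into two independent verifications: that $\det(W_j W_i^{-1}) = 1$ and that $W_j W_i^{-1}$ has integer entries.  Both rest on the single observation that, since the determinant of a $2\times 2$ matrix is a polynomial with integer coefficients in its entries, the congruence $W_j \equiv W_i \pmod{4d}$ propagates to
\[
 \det(W_j) \equiv \det(W_i) \pmod{4d}.
\]

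For the determinant step, each of $\det(W_i)$ and $\det(W_j)$ divides $d$ by the very definition of $d$ as the least common multiple of all the integers $\det(W_n)$, hence each has absolute value at most $d$.  Their difference therefore lies in $[-2d,2d]$ while being a multiple of $4d$, forcing it to vanish.  Call the common value $\delta$; then $\det(W_j W_i^{-1}) = \delta/\delta = 1$.

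For the integrality of $W_j W_i^{-1}$, write $W_j = W_i + 4dK$ with $K$ a $2\times 2$ integer matrix.  Multiplying on the right by $\adj(W_i)$ and using $W_i \adj(W_i) = \delta I$ gives
\[
 W_j \adj(W_i) = \delta I + 4d\, K\,\adj(W_i).
\]
Dividing by $\delta$ yields
\[
 W_j W_i^{-1} = I + (4d/\delta)\, K\, \adj(W_i).
\]
Since $\delta$ divides $d$, the ratio $4d/\delta$ is an integer, and since both $K$ and $\adj(W_i)$ already have integer entries, $W_j W_i^{-1}$ is integral, placing it in $\SL_2(\bZ)$.

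The only real subtlety is the role of the factor $4$ in the modulus $4d$: it is exactly what is required to make the a priori bound $2d$ on $|\det(W_j)-\det(W_i)|$ strictly less than the modulus of the congruence, thereby forcing equality of the two determinants.  A weaker modulus such as $2d$ would deliver the congruence only up to sign of $\delta$, and the clean adjugate identity exploited above would require an additional case analysis.  Beyond this, the argument is straightforward $2\times 2$ linear algebra over $\bZ$.
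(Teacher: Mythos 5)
Your proof is correct and follows essentially the same route as the paper's: both establish integrality via the adjugate identity $W_jW_i^{-1}=\det(W_i)^{-1}W_j\adj(W_i)$ together with the fact that $\det(W_i)$ divides $d$, and both get $\det(W_jW_i^{-1})=1$ by noting that two divisors of $d$ congruent modulo $4d$ must coincide. Your explicit write-out of $W_j=W_i+4dK$ and the aside on why the modulus $4d$ suffices are just more detailed renderings of the same argument.
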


\begin{proof}
This follows from the formula $W_jW_i^{-1} = \det(W_i)^{-1} W_j
\adj(W_i)$ where $\adj(W_i)$ denotes the adjoint of $W_i$.  Since
$W_j\adj(W_i) \equiv W_i\adj(W_i) \equiv \det(W_i) I \mod 4d$, and
since $\det(W_i)$ divides $d$, the matrix $W_jW_i^{-1}$ has integer
coefficients.  Moreover, as $\det(W_i)$ and $\det(W_j)$ divide $d$
and are congruent modulo $4d$, they must be equal, and so
$\det(W_jW_i^{-1})=1$.
\end{proof}

\begin{lemma}
 \label{red:lemmak}
Let $i_0\in\{0,1\}$.  There exists an integer $k\ge 1$ such that
$W_{i+2k}W_i^{-1}\in \SL_2(\bZ)$ for an infinite set of indices
$i\ge 1$ with $i\equiv i_0 \mod 2$.
\end{lemma}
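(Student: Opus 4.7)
The plan is to apply Lemma \ref{red:lemmad} after producing, via iterated pigeonhole, a fixed integer $k\ge 1$ for which $W_{i+2k}\equiv W_i\pmod{4d}$ holds on an infinite set of indices $i\equiv i_0\pmod 2$.

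The first step will be to verify that $d$ is a well-defined positive integer. Writing $W_i=c_i^{-1}\ux_iM_i$ with $c_i$ a positive divisor of $\det(M)$ yields $\det(W_i)=c_i^{-2}\det(\ux_i)\det(M)$; since \eqref{ext:propExt:eq1} gives $|\det(\ux_i)|\asymp 1$, the integers $\det(\ux_i)$ take only finitely many values, and the $c_i$ are bounded, so the integers $\det(W_i)$ take only finitely many values and $d=\mathrm{lcm}_i|\det(W_i)|$ is finite.

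Next, setting $N=4d$ and $R=\Mat_2(\bZ/N\bZ)$, I will construct by iterated pigeonhole an infinite nested family $T_0\supseteq T_1\supseteq T_2\supseteq\cdots$ of infinite subsets of $\bZ_{\ge 1}$ together with elements $A_0,A_1,\ldots\in R$: start from $T_0=\{i\ge 1:i\equiv i_0\pmod 2\}$, and at stage $n$ choose $A_n\in R$ so that $T_{n+1}:=\{i\in T_n:W_{i+2n}\equiv A_n\pmod N\}$ is infinite, which is possible because $R$ is finite. By construction, every $i\in T_{n+1}$ satisfies $W_{i+2j}\equiv A_j\pmod N$ for $j=0,\ldots,n$.

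To finish, I will apply pigeonhole once more to the sequence $(A_n)_{n\ge 0}$ in $R$: some value $V\in R$ occurs infinitely often. Either $A_0=V$ (in which case I pick $n_1\ge 1$ with $A_{n_1}=V$ and set $k=n_1$, so that $W_i\equiv V\equiv W_{i+2k}\pmod N$ for every $i\in T_{n_1+1}$), or $A_0\neq V$ (in which case I pick $n_1<n_2$ with $A_{n_1}=A_{n_2}=V$ and set $k=n_2-n_1$, so that for each $i\in T_{n_2+1}$ the index $i':=i+2n_1$ has parity $i_0$ and satisfies $W_{i'}\equiv V\equiv W_{i'+2k}\pmod N$). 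In both cases Lemma \ref{red:lemmad} yields $W_{i+2k}W_i^{-1}\in\SL_2(\bZ)$ on the resulting infinite set of indices. There is no real obstacle here: the only non-combinatorial point is the finiteness of $d$, which is handed to me by the bound $|\det(\ux_i)|\asymp 1$.
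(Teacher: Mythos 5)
Your argument is correct and follows essentially the same route as the paper: reduce the matrices $W_i$ modulo $4d$, use the finiteness of the set of residue classes to produce a single gap $k$ realizing the congruence $W_{i+2k}\equiv W_i \pmod{4d}$ on an infinite set of indices of the prescribed parity, and conclude with Lemma \ref{red:lemmad}. The only differences are organizational: the paper runs the pigeonhole on finite windows of $N+1$ consecutive terms and then on the bounded gap $k\le N$, while you use nested infinite subsequences; your preliminary verification that $d$ is a well-defined finite integer is a sound point that the paper leaves implicit.
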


\begin{proof} Let $d$ be as in Lemma \ref{red:lemmad}, and let
$N=(4d)^4$ denote the number of congruence classes of $2\times 2$
integral matrices modulo $4d$.  For each integer $j\ge 1$ with
$j\equiv i_0 \mod 2$, at least two matrices among $W_j, W_{j+2},
\dots, W_{j+2N}$ are congruent modulo $4d$.  So there exist integers
$i$ and $k$ with $i\ge j$, $i\equiv i_0 \mod 2$ and $1\le k\le N$
such that $W_i\equiv W_{i+2k} \mod 4d$.  By varying $j$, we get
infinitely many such pairs $(i,k)$.  As $k$ stays within a finite
set, at least one value of $k$ arises infinitely many often. The
conclusion follows by Lemma \ref{red:lemmad}.
\end{proof}

\begin{lemma}
 \label{red:lemmaWWW}
For each $i\ge 2$, we have $\|W_iW_{i-1}W_i-W_{i-1}W_i^2\| \asymp
X_{i-1}$.
\end{lemma}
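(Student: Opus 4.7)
The plan is to simplify $W_iW_{i-1}W_i - W_{i-1}W_i^2 = [W_i,W_{i-1}]\,W_i$ by exploiting the recurrence from Proposition~\ref{ext:propExt}. Writing $W_j = \alpha_j^{-1}\ux_jM_j$ for a positive integer $\alpha_j$ that is bounded independently of $j$ (since $|\det\ux_j|\asymp 1$ and $\alpha_j^2 \mid \det\ux_j\cdot\det M$), the relation $\ux_{i+1}=\pm\,\ux_i\ast M_i\ast\ux_{i-1}$ (valid for $i$ large, which suffices up to adjusting implied constants) yields an identity $\ux_iM_i\ux_{i-1} = d_i\,\ux_{i+1}$ for a bounded integer $d_i$. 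Since $\ux_{i+1}$ is symmetric and $M_i^T = M_{i-1}$ (the transposition swaps $M$ and $\tM$, hence the parity), taking transposes gives the companion identity $\ux_{i-1}M_{i-1}\ux_i = d_i\,\ux_{i+1}$. Using both, I compute
\[
 W_iW_{i-1} = \tfrac{d_i}{\alpha_i\alpha_{i-1}}\,\ux_{i+1}M_{i-1}, \qquad
 W_{i-1}W_i = \tfrac{d_i}{\alpha_i\alpha_{i-1}}\,\ux_{i+1}M_i,
\]
and since $M$ is non-symmetric we have $M_{i-1}-M_i = \pm(c-b)J$ with $c\neq b$, so
\[
 [W_i,W_{i-1}]\,W_i = \frac{\pm\,d_i(c-b)}{\alpha_i^2\alpha_{i-1}}\;\ux_{i+1}\,J\,\ux_i\,M_i.
\]

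To estimate $\|\ux_{i+1}J\ux_iM_i\|$, I decompose $\ux_{i+1} = x_{i+1,0}P + R_{i+1}$, where $P = \uv\,\uv^T$ with $\uv = (1,\xi)^T$, and $\|R_{i+1}\| = O(X_{i+1}^{-1})$ by the second estimate in \eqref{ext:propExt:eq1}. The key observation is that
\[
 \uv^T J = (1,\xi)\matrice{0}{1}{-1}{0} = (-\xi,1) = -\uw^T
 \where \uw = (\xi,-1)^T,
\]
giving $PJ = -\uv\,\uw^T$ and hence $PJ\ux_i = -\uv(\ux_i\uw)^T$. Since $\ux_i\uw = (x_{i,0}\xi-x_{i,1},x_{i,1}\xi-x_{i,2})^T$ has norm $\|(\xi,-1)\ux_i\| \asymp X_i^{-1}$, we obtain
\[
 \ux_{i+1}\,J\,\ux_i = -x_{i+1,0}\,\uv(\ux_i\uw)^T + R_{i+1}J\ux_i,
\]
where the main term has norm $\asymp X_{i+1}X_i^{-1}$ and the error is $O(X_{i+1}^{-1}X_i)$. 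Using $X_{i+1}\asymp X_i^\gamma$ together with $\gamma^2=\gamma+1$, these orders simplify respectively to $X_{i-1}$ and $X_{i-1}^{-1}$. Multiplying on the right by the fixed invertible matrix $M_i$ preserves the leading order, so $\|\ux_{i+1}J\ux_iM_i\|\asymp X_{i-1}$, which finishes the proof.

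The main obstacle is recognizing that cancellation is essential: the naive bounds on $W_iW_{i-1}W_i$ and $W_{i-1}W_i^2$ are both $\ll X_i^2X_{i-1}$, so a factor of $X_i^2$ must drop. This happens in two stages. First, the symmetry $\ux_iM_i\ux_{i-1}=\ux_{i-1}M_{i-1}\ux_i$, coming from the symmetry of $\ux_{i+1}$ and the identity $M_i^T=M_{i-1}$, collapses the commutator into the single matrix product $\ux_{i+1}(M_{i-1}-M_i)$. Second, the Euclidean orthogonality $\uv^T\uw = 0$, manifested in the formula $PJ\ux_i = -\uv(\ux_i\uw)^T$, forces $\ux_{i+1}J\ux_i$ to be much smaller than the naive bound $X_{i+1}X_i$, bringing it down to $X_{i+1}X_i^{-1}=X_{i-1}$.
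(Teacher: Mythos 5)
Your proof is correct, and its first half is exactly the paper's: both reduce the commutator $[W_i,W_{i-1}]W_i$ to a bounded nonzero multiple of $\ux_{i+1}J\ux_iM_i$ via the symmetry of $\ux_{i+1}$ and the identity $\trans M_i=M_{i-1}$. Where you diverge is in estimating $\|\ux_{i+1}J\ux_i\|$. The paper feeds the recurrence in a second time and uses the exact algebraic identity $\ux_iJ\ux_i=\det(\ux_i)J$ (valid for any symmetric $2\times2$ matrix), obtaining $\ux_{i+1}J\ux_i=\kappa^{-1}\det(\ux_i)\,\ux_{i-1}M_{i-1}J$ on the nose, so the estimate $\asymp X_{i-1}$ is immediate and uses nothing beyond $|\det\ux_i|\asymp1$. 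You instead split $\ux_{i+1}=x_{i+1,0}\uv\,\trans\uv+R_{i+1}$ and exploit $\trans\uv J=-\trans\uw$ together with $\|\ux_i\uw\|\asymp X_i^{-1}$, then convert $X_{i+1}X_i^{-1}$ into $X_{i-1}$ via $X_{i+1}\asymp X_i^\gamma$ and $\gamma^2=\gamma+1$. This is a legitimate alternative that explains the size $X_{i-1}$ from the Diophantine side, but it consumes the full strength of \eqref{ext:propExt:eq1} (both the approximation estimate and the growth exponent), whereas the paper's route is exact and needs neither; your route also silently uses $|x_{i+1,0}|\asymp X_{i+1}$ for the lower bound on the main term --- this does follow from $\|(\xi,-1)\ux_{i+1}\|\ll X_{i+1}^{-1}$ (it is the same fact invoked in Lemma \ref{conj:lemmaFG}), but you should state it, since otherwise the rank-one term could a priori be small.
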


\begin{proof}
Since $W_i = \ux_i*M_i$ and $W_{i-1} = \ux_{i-1}*M_{i-1}$ are
respectively quotients of $\ux_iM_i$ and $\ux_{i-1}M_{i-1}$ by
divisors of $\det(M)$, this amounts to showing that
\[
 \|(\ux_iM_i\ux_{i-1}M_{i-1}-\ux_{i-1}M_{i-1}\ux_iM_i)\ux_iM_i\|
  \asymp
 X_{i-1}.
\]
Since $\ux_iM_i\ux_{i-1} = \ux_{i-1}M_{i-1}\ux_i$ is the product of
$\ux_{i+1}$ by a divisor $\kappa$ of $\det(\ux_i)\det(M)$ and since
the latter is a bounded integer, this in turn amounts to showing
that
\[
 \|\ux_{i+1} (M_{i-1} - M_i) \ux_iM_i\|  \asymp X_{i-1}.
\]
Finally, since $M_{i-1} - M_i = \pm (M-\tM) = \pm (b-c)J$, this last
estimate follows from the fact that $\ux_{i+1}J\ux_i = \kappa^{-1}
\ux_{i-1}M_{i-1}\ux_i J \ux_i = \kappa^{-1} \det(\ux_i)
\ux_{i-1}M_{i-1} J$ has norm of the same order as $\|\ux_{i-1}\| =
X_{i-1}$.
\end{proof}

\begin{lemma}
 \label{red:lemmaF}
For each $i\ge 2$, we have
\begin{equation}
 \label{red:lemmaF:eq0}
 \|F_{i+2}((U,T)\tW_i)-\det(W_i)F_{i+2}(U,T)\| \ll X_{i-1}.
\end{equation}
\end{lemma}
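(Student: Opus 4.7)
The plan is to rewrite the left-hand side as a quadratic form $-(U,T)A_i(U,T)^t$ in $(U,T)$ and then bound the norm of the $2\times 2$ coefficient matrix
\[
 A_i := \tW_i J W_{i+2} W_i - \det(W_i) J W_{i+2}.
\]
From the definition $F_{i+2}(U,T) = -(U,T) J W_{i+2}(U,T)^t$, the substitution $(U,T) \mapsto (U,T)\tW_i$ yields $F_{i+2}((U,T)\tW_i) = -(U,T) \tW_i J W_{i+2} W_i (U,T)^t$, so each coefficient of the desired polynomial difference is a fixed linear combination of entries of $A_i$, and $\|F_{i+2}((U,T)\tW_i)-\det(W_i)F_{i+2}(U,T)\| \ll \|A_i\|$.

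The key move is to unfold $W_{i+2}$ via the recurrence \eqref{ext:propExt:eq2} twice. Since $M_{i+2}=M_i$, the relation $\ux_{i+2} = \pm\ux_{i+1}*M_{i+1}*\ux_i$ translates, after multiplying by $M_{i+2}$ on the right, to $W_{i+2} = \pm\kappa_2^{-1}W_{i+1}W_i$ for some positive integer $\kappa_2$ dividing $\det(W_{i+1})$ and $\det(W_i)$, hence bounded since each $|\det W_j|$ is bounded. Shifting the same recurrence by one (valid for $i\ge 2$) gives analogously $W_{i+1} = \pm\kappa_1^{-1}W_iW_{i-1}$ with $\kappa_1$ bounded. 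Combining these yields
\[
 W_{i+2} = \lambda\, W_i W_{i-1} W_i
\]
for some $\lambda = \pm\kappa_1^{-1}\kappa_2^{-1}$ with $|\lambda|\asymp 1$.

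Substituting this into $A_i$ and invoking the classical identity $\tW_i J W_i = \det(W_i)\,J$ (which holds for any $2\times 2$ matrix, by a direct computation) to absorb the outer $\tW_i J$ with an inner $W_i$, I obtain
\begin{align*}
 A_i
  &= \lambda\,(\tW_i J W_i)\, W_{i-1} W_i^2 \;-\; \lambda\det(W_i)\, J W_i W_{i-1} W_i\\
  &= -\lambda \det(W_i)\, J \bigl(W_i W_{i-1} W_i - W_{i-1} W_i^2\bigr).
\end{align*}
Lemma \ref{red:lemmaWWW} bounds the parenthesized quantity by $\asymp X_{i-1}$, and since $|\lambda|$, $|\det(W_i)|$ and $\|J\|$ are all bounded, this gives $\|A_i\|\ll X_{i-1}$, as required.

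The one delicate point is recognizing that a single application of the recurrence does not suffice: expanding $W_{i+2}$ only as $W_{i+1}W_i$ and trying to extract a commutator $[W_{i+1},W_i]$ yields at best a bound of order $X_iX_{i-1}$, not $X_{i-1}$. Going one step further, down to the product $W_iW_{i-1}W_i$, is what lets the outer $\tW_i$ pair with an inner $W_i$ via the determinant identity and exposes exactly the expression already controlled by Lemma \ref{red:lemmaWWW}.
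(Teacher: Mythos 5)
Your argument is correct and is essentially the paper's own proof: you reduce to bounding the coefficient matrix $A_i = \tW_i J W_{i+2}W_i - \det(W_i)JW_{i+2}$, unfold $W_{i+2}$ to $W_i W_{i-1} W_i$ up to a bounded scalar via the recurrence, absorb the outer $\tW_i J$ with an inner $W_i$ through the identity $\tW_i J W_i = \det(W_i)J$, and conclude by Lemma~\ref{red:lemmaWWW}. The only cosmetic difference is that you track the bounded rational factors $\kappa_1,\kappa_2$ explicitly where the paper simply writes $W_{i+2}=W_{i+1}*W_i=W_i*W_{i-1}*W_i$ and works up to $\asymp$.
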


\begin{proof}
The left hand side of \eqref{red:lemmaF:eq0} is the norm of the
polynomial $\begin{pmatrix} U &T\end{pmatrix} A \begin{pmatrix} U\\
T\end{pmatrix}$ where
\[
 -A = \tW_i J W_{i+2}W_i -\det(W_i)J W_{i+2}.
\]
Since $W_{i+2} = W_{i+1}*W_i = W_i*W_{i-1}*W_i$, we find that
\[
\begin{aligned}
 \|A\|
 &\asymp \|\tW_i JW_iW_{i-1}W_i^2 - \det(W_i) JW_iW_{i-1}W_i \| \\
 &= |\det W_i|\,\|JW_{i-1}W_i^2 - JW_iW_{i-1}W_i \| \\
 &\ll X_{i-1},
\end{aligned}
\]
where the last estimate comes from Lemma \ref{red:lemmaWWW}.  The
conclusion follows.
\end{proof}

\begin{lemma}
 \label{red:lemmaGW}
For each $i\ge 2$, we have
\begin{equation*}
 \|G_i((U,T)\tW_i)-\det(W_i)G_i(U,T)\| \ll X_i^{-2}.
\end{equation*}
\end{lemma}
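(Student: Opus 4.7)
The plan is to deduce this estimate from Lemma~\ref{red:lemmaF} by exploiting the near-proportionality between $F_{i+2}$ and $G_i$ provided by Lemma~\ref{conj:lemmaFG}. The crucial observation is that $G_i$ depends only on the parity of $i$, so $G_{i+2}=G_i$, and Lemma~\ref{conj:lemmaFG} (applied at index $i+2$) gives
\[
 F_{i+2}(U,T) = r_{i+2}\, G_i(U,T) + E_{i+2}(U,T),
\]
where $r_{i+2}\in\bQ$ satisfies $|r_{i+2}|\asymp X_{i+2}$ and the remainder polynomial $E_{i+2}$ has $\|E_{i+2}\|\ll X_{i+2}^{-1}$. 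Solving this identity for $G_i$ lets us transfer any norm estimate on $F_{i+2}$ into one on $G_i$, up to controlled error.

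First I would substitute $(U,T)\mapsto (U,T)\tW_i$ in the above identity and divide by $r_{i+2}$, obtaining an expression for $G_i((U,T)\tW_i)$ in terms of $F_{i+2}((U,T)\tW_i)$ modulo a remainder $r_{i+2}^{-1}E_{i+2}((U,T)\tW_i)$. Since the substitution is by a matrix of norm $\|W_i\|\asymp X_i$ and $E_{i+2}$ is of degree $2$, the remainder picks up a factor $\|W_i\|^2\asymp X_i^2$, giving a contribution of norm $\ll r_{i+2}^{-1}\,X_{i+2}^{-1}X_i^2\asymp X_{i+2}^{-2}X_i^2$. Next I would apply Lemma~\ref{red:lemmaF} to replace $F_{i+2}((U,T)\tW_i)$ by $\det(W_i)F_{i+2}(U,T)$ modulo $\cO(X_{i-1})$, and then use the same relation once more to rewrite $\det(W_i)F_{i+2}(U,T)$ as $\det(W_i)r_{i+2}G_i(U,T)$ modulo $\cO(X_{i+2}^{-1})$. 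Note that $|\det(W_i)|\asymp 1$ because $W_i=\ux_i*M_i$ and $|\det(\ux_i)|\asymp 1$, so this constant factor causes no issue.

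Collecting the three errors, the left-hand side of the claimed estimate has norm
\[
 \ll r_{i+2}^{-1}\Bigl(X_{i-1}+X_{i+2}^{-1}X_i^{2}+X_{i+2}^{-1}\Bigr)
 \asymp X_{i+2}^{-1}X_{i-1}+X_{i+2}^{-2}X_i^{2}+X_{i+2}^{-2}.
\]
Invoking the golden-ratio growth \eqref{ext:propExt:eq1}, namely $X_{i+1}\asymp X_i^{\gamma}$ and therefore $X_{i+2}\asymp X_i^{\gamma+1}=X_i^{\gamma^2}$ and $X_{i-1}\asymp X_i^{\gamma-1}$, the first summand evaluates to $X_i^{-(\gamma+1)+(\gamma-1)}=X_i^{-2}$, while the second and third are $X_i^{-2\gamma}$ and $X_i^{-2(\gamma+1)}$, both dominated by $X_i^{-2}$. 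No genuine obstacle appears; the only bookkeeping to be careful about is ensuring that the change of variables by $\tW_i$ inflates no error beyond the $X_i^{-2}$ threshold, which is exactly why $\gamma^2=\gamma+1$ is used in two places.
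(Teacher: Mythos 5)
Your proposal is correct and follows essentially the same route as the paper: both invoke Lemma~\ref{conj:lemmaFG} at index $i+2$ (using $G_{i+2}=G_i$) to trade $G_i$ for $r_{i+2}^{-1}F_{i+2}$, apply Lemma~\ref{red:lemmaF} to handle the substitution by $\tW_i$, and convert back, with the dominant error $X_{i+2}^{-1}X_{i-1}\asymp X_i^{-2}$. Your exponent bookkeeping via $\gamma^2=\gamma+1$ matches the paper's implicit estimates exactly.
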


\begin{proof}
Since $G_i=G_{i+2}$, Lemma \ref{conj:lemmaFG} shows that $G_i =
r_{i+2}^{-1}F_{i+2} +\cO(X_{i+2}^{-2})$ for some non-zero rational
number $r_{i+2}$ with $|r_{i+2}|\asymp X_{i+2}$. As
$\|W_i\|\asymp X_i$, this gives
\[
 \begin{aligned}
 G_i((U,T)\tW_i)
 &= r_{i+2}^{-1}F_{i+2}((U,T)\tW_i) +\cO(X_i^2X_{i+2}^{-2}) \\
 &= r_{i+2}^{-1}\det(W_i)F_{i+2}(U,T) +\cO(X_{i+2}^{-1}X_{i-1})
   \quad\text{by Lemma \ref{red:lemmaF},}\\
 &= \det(W_i)G_i(U,T) +\cO(X_{i+2}^{-1}X_{i-1}).
 \end{aligned}
\]
\end{proof}

\begin{lemma}
 \label{red:lemmaGSik}
For any integers $i\ge 1$ and $k\ge 0$, the matrix $S_{i,k} :=
W_{i+2k}W_i^{-1}$ satisfies
\begin{equation*}
 \|G_{i+1}((U,T)\tS_{i,k})-\det(S_{i,k})G_{i+1}(U,T)\|
 \le cX_{i+1}^{-2},
\end{equation*}
with a constant $c>0$ which is independent of both $i$ and $k$.
\end{lemma}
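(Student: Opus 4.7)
The plan is to set $R_A(G)(U,T) := G((U,T)\trans A) - \det(A)\, G(U,T)$ for any invertible $2\times 2$ matrix $A$, so that the conclusion becomes $\|R_{S_{i,k}}(G_{i+1})\| \le c X_{i+1}^{-2}$, and to attack it by induction on $k$ using the composition identity
\[
 R_{AB}(G) = R_A(G)((\cdot)\trans B) + \det(A)\, R_B(G),
\]
obtained by inserting and removing $\det(A)\, G((U,T)\trans B)$ inside $G((U,T)\trans A\trans B)$.

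For the base case $k=1$, the recurrence $W_{i+2} = W_{i+1}*W_i = c_i^{-1} W_{i+1}W_i$ with $c_i$ a positive divisor of $\det M$ gives $S_{i,1} = c_i^{-1} W_{i+1}$, whence $R_{S_{i,1}}(G_{i+1}) = c_i^{-2} R_{W_{i+1}}(G_{i+1})$ and Lemma \ref{red:lemmaGW} at index $i+1$ yields $\|R_{S_{i,1}}(G_{i+1})\| \ll X_{i+1}^{-2}$. For the inductive step I would apply the composition identity with $A = S_{i+2k,1}$ and $B = S_{i,k}$, noting that $S_{i+2k,1} S_{i,k} = W_{i+2k+2}W_{i+2k}^{-1}W_{i+2k}W_i^{-1} = S_{i,k+1}$, and iterate the resulting two-term recursion down to $k=1$ to arrive at
\[
 \|R_{S_{i,k+1}}(G_{i+1})\| \le \bigl|\det S_{i+2,k}\bigr|\cdot\|R_{S_{i,1}}(G_{i+1})\| + C \sum_{j=1}^{k} \bigl|\det S_{i+2j+2,k-j}\bigr|\cdot \|R_{S_{i+2j,1}}(G_{i+1})\|\,\|S_{i,j}\|^2,
\]
where the coefficient of the $j$-th error term has been compressed via the telescoping $\prod_{m=j+1}^{k} S_{i+2m,1} = W_{i+2k+2}W_{i+2j+2}^{-1} = S_{i+2j+2,k-j}$ into the determinant of a single $S$-matrix.

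To close the argument, three estimates are needed. Every determinant $|\det S_{i+2a,b}|$ is bounded by $D_0 := \sup_j |\det W_j|$, which is finite because each $\det W_j$ is a nonzero integer dividing $\det M$. The decomposition $S_{i,j} = C_j W_{i+2j-1}W_{i+2j-3}\cdots W_{i+1}$ with $C_j = \prod_{\ell=0}^{j-1} c_{i+2\ell}^{-1} \le 1$ gives $\|S_{i,j}\| \ll X_{i+2j}$, and combining this with the base case at index $i+2j$ and the growth $X_{i+2j+1}\asymp X_{i+2j}^\gamma$ delivers
\[
 \|R_{S_{i+2j,1}}(G_{i+1})\|\,\|S_{i,j}\|^2 \ll X_{i+2j+1}^{-2}X_{i+2j}^{2} \asymp X_{i+2j}^{-2/\gamma} \asymp X_{i+2j-1}^{-2}.
\]
Since $X_{i+2j-1}\asymp X_{i+1}^{\gamma^{2(j-1)}}$ grows doubly exponentially in $j$, the series $\sum_{j\ge 1} X_{i+2j-1}^{-2}$ is dominated by its first term $X_{i+1}^{-2}$, and everything collapses to $\|R_{S_{i,k+1}}(G_{i+1})\| \ll X_{i+1}^{-2}$ with an implicit constant depending only on $D_0$, $C$ and the constants of the preceding lemmas.

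The main obstacle is keeping the accumulated determinant factors under control: a naive iteration of the two-term recursion would multiply the principal $X_{i+1}^{-2}$ error by $D_0^{k-1}$, which grows without bound whenever $D_0>1$. The observation that rescues the argument is the telescoping above, which says that any block of consecutive $S_{*,1}$'s is itself one of the $S_{*,*}$'s, and therefore has determinant bounded by the single constant $D_0$ regardless of how many factors are multiplied. This converts the a-priori exponential accumulation into a bound that is uniform in both $i$ and $k$.
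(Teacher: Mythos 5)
Your proof is correct and follows essentially the same route as the paper: both arguments induct on $k$ using the cocycle identity for $G\mapsto G((U,T)\trans A)-\det(A)\,G(U,T)$ together with the one-step estimate of Lemma \ref{red:lemmaGW} applied through $S_{j,1}=c_j^{-1}W_{j+1}$, and both rely on the doubly exponential growth of the $X_j$ (and on $G_{i+1}=G_{i+2j+1}$) to absorb the accumulated errors. The only difference is bookkeeping: the paper peels off the factor $W_{i+1}$ from the bottom and tames the accumulation via a contraction on the normalized quantities $h_{i,k}=\|H_{i,k}\|X_{i+1}^2$, whereas you peel from the top, unroll the recursion into an explicit sum, and control the coefficients by telescoping the determinants into a single $\det S_{i+2j+2,k-j}$ — a clean alternative, with the small caveats that your bound $\|S_{i,j}\|\ll X_{i+2j}$ really carries a harmless factor $c^j$ from submultiplicativity of the norm, and that $\det W_j$ need not divide $\det M$ (it is merely a bounded nonzero integer, which suffices).
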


\begin{proof}
Define $H_{i,k}(U,T) = G_{i+1}((U,T)\tS_{i,k}) - \det(S_{i,k})
G_{i+1}(U,T)$ for each $i\ge 1$ and $k\ge 0$.  When $k\ge 1$, we
have
\[
 S_{i,k} =  S_{i+2,k-1}W_{i+2}W_i^{-1}
         = a_i^{-1} S_{i+2,k-1}W_{i+1}
\]
for some bounded positive integer $a_i$, and so
\[
 \begin{aligned}
 H_{i,k}(U,T)
  &= a_i^{-2} H_{i+2,k-1}((U,T)\tW_{i+1}) \\
  &\quad + a_i^{-2} \det(S_{i+2,k-1})
      \big( G_{i+1}((U,T)\tW_{i+1}) - \det(W_{i+1})G_{i+1}(U,T)
      \big).
 \end{aligned}
\]
Since $|\det(S_{i+2,k-1})| \le |\det(W_{i+2})| \ll 1$, we deduce
from Lemma \ref{red:lemmaGW} that
\begin{equation}
\label{red:lemmaGSik:eq1}
 \|H_{i,k}\|
  \le c_1 \|H_{i+2,k-1}\|\,X_{i+1}^2 + c_1 X_{i+1}^{-2}
\end{equation}
with a constant $c_1>0$ which is independent of $i$ and $k$.  Put
$h_{i,k}=\|H_{i,k}\|X_{i+1}^2$ and choose $c_2>0$ such that
$X_iX_{i+1} \le c_2X_{i+2}$ for each $i\ge 1$.  Then, we find
$X_{i+3}^{-2} \le c_2^4 X_i^{-2} X_{i+1}^{-4}$ and so
\eqref{red:lemmaGSik:eq1} leads to
\begin{equation}
\label{red:lemmaGSik:eq2}
 h_{i,k} \le c_1 + c_1c_2^4 X_i^{-2} h_{i+2,k-1},
\end{equation}
for any $i,k\ge 1$.  Our goal is to show that $h_{i,k}$ is bounded
above independently of $i$ and $k$.  To this end, we choose an
integer $i_0\ge 1$ such that $X_i^2\ge 2c_1c_2^4$ for each $i\ge
2i_0$.  Then \eqref{red:lemmaGSik:eq2} gives  $h_{i,k} \le c_1 +
(1/2) h_{i+2,k-1}$ for each $i\ge 2i_0$ and $k\ge 1$.  Since
$h_{i+2k,0}=0$, this implies that $h_{i,k}\le 2c_1$ whenever $i\ge
2i_0$.  If $1\le i < 2i_0 \le 2k$, the estimate
\eqref{red:lemmaGSik:eq2} leads to $h_{i,k} \ll 1 + h_{i+2i_0,k-i_0}
\le 1+2c_1$.  We conclude that $h_{i,k}\ll 1$ for any $i\ge 1$ and
$k\ge 0$.
\end{proof}

\begin{lemma}
 \label{red:lemmaGS}
Let $G$ stand for one of the polynomials $G'$ or $G''$.  For each
$\delta>0$, there exists a matrix $S\in\SL_2(\bZ)$ which satisfies
both
\begin{equation}
 \label{red:lemmaGS:eq0}
 \|(\xi,-1)S\| \le \delta \et \|G((U,T)\tS)-G(U,T)\| \le \delta.
\end{equation}
\end{lemma}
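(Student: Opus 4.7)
The strategy is to take $S$ of the form $S_{i,k} = W_{i+2k}W_i^{-1}$ for suitable $i$ and $k$, since the preceding lemmas are tailored precisely to control such matrices. The pointwise estimate of Lemma \ref{red:lemmaGSik} already says that $G_{i+1}((U,T)\tS_{i,k}) - \det(S_{i,k})G_{i+1}(U,T)$ is of order $X_{i+1}^{-2}$, so once we can arrange both $\det(S_{i,k}) = 1$ and $G_{i+1} = G$, the second inequality in \eqref{red:lemmaGS:eq0} will come for free by sending $i \to \infty$.

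First I would fix a parity $i_0 \in \{0,1\}$ so that $G_{i_0 + 1} = G$ (i.e.\ $i_0 = 0$ if $G = G'$, since $G_i = G'$ for odd $i$, and $i_0 = 1$ if $G = G''$). Then I invoke Lemma \ref{red:lemmak} to obtain an integer $k \ge 1$ and an infinite set $I$ of indices $i \ge 1$ with $i \equiv i_0 \pmod 2$ such that $S_{i,k} \in \SL_2(\bZ)$ for every $i \in I$. For such $i$, $\det(S_{i,k}) = 1$ and $G_{i+1} = G$, so Lemma \ref{red:lemmaGSik} gives
\[
 \|G((U,T)\tS_{i,k}) - G(U,T)\| \le c\,X_{i+1}^{-2},
\]
which tends to $0$.

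The first inequality in \eqref{red:lemmaGS:eq0} is handled directly: factor $(\xi,-1)S_{i,k} = \bigl((\xi,-1)W_{i+2k}\bigr) W_i^{-1}$. Since $W_j = c_j^{-1}\ux_j M_j$ with $c_j$ a bounded divisor of $\det M$, one has $\|(\xi,-1)W_j\| \ll \|(\xi,-1)\ux_j\| \asymp X_j^{-1}$. On the other hand $W_i^{-1} = \det(W_i)^{-1}\adj(W_i)$, and as $\det(W_i)$ is a nonzero integer, $\|W_i^{-1}\| \ll \|\adj(W_i)\| = \|W_i\| \asymp X_i$. Thus
\[
 \|(\xi,-1)S_{i,k}\| \ll X_{i+2k}^{-1} X_i.
\]
Iterating $X_{j+1} \asymp X_j^{\gamma}$ gives $X_{i+2k} \gg X_i^{\gamma^{2k}}$, so this last quantity is $\ll X_i^{1-\gamma^{2k}}$; since $\gamma^{2k} > 1$ and $X_i \to \infty$ along $I$, it tends to $0$. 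Taking $i \in I$ large enough makes both quantities $\le \delta$ simultaneously, giving the required $S = S_{i,k}$.

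The only delicate point is securing $S_{i,k} \in \SL_2(\bZ)$ rather than merely in $\cP$, but that is exactly the content of the pigeonhole argument already packaged in Lemmas \ref{red:lemmad} and \ref{red:lemmak}. Everything else amounts to combining the asymptotic estimates from Proposition \ref{ext:propExt} with the approximation bound for $G_{i+1}$ already furnished by Lemma \ref{red:lemmaGSik}.
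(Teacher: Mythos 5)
Your proposal is correct and follows essentially the same route as the paper's proof: choose the parity $i_0$ so that $G=G_{i+1}$, invoke Lemma \ref{red:lemmak} to get an infinite set of indices with $S_{i,k}\in\SL_2(\bZ)$, apply Lemma \ref{red:lemmaGSik} for the second bound, and estimate $\|(\xi,-1)S_{i,k}\|\ll X_{i+2k}^{-1}X_i$ for the first. The only cosmetic difference is that you justify $X_{i+2k}^{-1}X_i\to 0$ via $X_{j+1}\asymp X_j^{\gamma}$, while the paper simply notes this quantity is $\ll X_{i+1}^{-1}$; both are fine.
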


\begin{proof}
Put $i_0=0$ if $G=G'$ and $i_0=1$ if $G=G''$, so that $G=G_{i+1}$
for each integer $i\ge 1$ with $i\equiv i_0 \mod 2$.  By Lemma
\ref{red:lemmak}, there exists an integer $k\ge 1$ such that
$S_{i,k}=W_{i+2k}W_i^{-1} \in \SL_2(\bZ)$ for an infinite set $I$ of
positive integers $i$ with $i\equiv i_0 \mod 2$.  Since $W_i^{-1} =
\det(W_i)^{-1} \adj(W_i)$ and $W_{i+2k}= \ux_{i+2k}*M_i$, we find
that
\[
 \|(\xi,-1)S_{i,k}\|
 \ll \|(\xi,-1)\ux_{i+2k}\|\,\|W_i\|
 \ll X_{i+2k}^{-1}X_i \ll X_{i+1}^{-1}.
\]
This combined with Lemma \ref{red:lemmaGSik} shows that, given
$\delta>0$, the matrix $S=S_{i,k}$ satisfies \eqref{red:lemmaGS:eq0}
for each sufficiently large $i\in I$.
\end{proof}

\begin{theorem}
 \label{red:thm}
We have  $\disp \nu(\xi) = \frac{\mu(G')}{\sqrt{\disc(G')}} =
\frac{\mu(G'')}{\sqrt{\disc(G'')}}$.
\end{theorem}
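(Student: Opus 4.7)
The plan is to prove each equality by establishing the two opposing inequalities; the argument for $G''$ is identical up to an exchange of parity and conjugate, so I describe only the case of $G'$.

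For the upper bound $\mu(G')/\sqrt{\disc(G')}\le\nu(\xi)$ a direct computation suffices. The factorization $G'(U,T)=(c+d\xi)(T-\xi U)(T-\xi' U)$ yields $\disc(G')=(c+d\xi)^{2}(\xi-\xi')^{2}$, hence for every integer pair $(q,p)$,
\[
 \frac{|G'(q,p)|}{\sqrt{\disc(G')}}=\frac{|p-\xi q|\,|p-\xi' q|}{|\xi-\xi'|}.
\]
Taking $p$ to be the nearest integer to $q\xi$ with $q\ge 1$, this ratio equals $q\|q\xi\|(1+O(q^{-1}))$. Plugging in a sequence with $q\|q\xi\|\to\nu(\xi)$ gives the desired inequality.

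For the reverse inequality I fix a nonzero $(U_0,T_0)\in\bZ^{2}$. By Lemma \ref{red:lemmak} applied with $i_0=0$ (so that $G_{i+1}=G'$), there exist $k\ge 1$ and an infinite set $I$ of even positive integers for which $S_{i,k}=W_{i+2k}W_i^{-1}\in\SL_2(\bZ)$. For each $i\in I$, I set $(U_i,T_i)=(U_0,T_0)\tS_{i,k}\in\bZ^{2}$. The estimate $\|(\xi,-1)S_{i,k}\|\ll X_{i+1}^{-1}$ (from the proof of Lemma \ref{red:lemmaGS}) gives $|T_i-\xi U_i|\ll\|(U_0,T_0)\|\,X_{i+1}^{-1}$, while Lemma \ref{red:lemmaGSik} (with $\det S_{i,k}=1$), evaluated at $(U_0,T_0)$, gives $|G'(U_i,T_i)-G'(U_0,T_0)|\ll\|(U_0,T_0)\|^{2}\,X_{i+1}^{-2}$. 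Both bounds tend to $0$ as $i\to\infty$ through $I$.

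Since $S_{i,k}$ is invertible, $(U_i,T_i)\ne(0,0)$; since $\xi$ is irrational, no nonzero lattice value can be attained infinitely often as $|T_i-\xi U_i|\to 0$, so $\|(U_i,T_i)\|\to\infty$, whence $|U_i|\to\infty$. For large $i$ the integer $T_i$ is then the nearest integer to $\xi U_i$, so $\|U_i\xi\|=|T_i-\xi U_i|$, and, using $|T_i-\xi'U_i|=|U_i|\,|\xi-\xi'|\,(1+o(1))$,
\[
 \frac{|G'(U_i,T_i)|}{\sqrt{\disc(G')}}=\frac{|T_i-\xi U_i|\,|T_i-\xi'U_i|}{|\xi-\xi'|}=|U_i|\,\|U_i\xi\|\,(1+o(1)).
\]
Letting $i\to\infty$ through $I$, the right-hand side is bounded below by $\liminf_{q\to\infty} q\|q\xi\|=\nu(\xi)$ while the left-hand side converges to $|G'(U_0,T_0)|/\sqrt{\disc(G')}$. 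Taking the infimum over $(U_0,T_0)$ gives $\mu(G')/\sqrt{\disc(G')}\ge\nu(\xi)$; the same argument with $i_0=1$ and $G''=G_{i+1}$ disposes of $G''$. The one delicate step is the verification that $|U_i|\to\infty$, for which the compactness sketch above is the essential input; everything else is a direct consequence of the preparatory lemmas evaluated at the fixed vector $(U_0,T_0)$.
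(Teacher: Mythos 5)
Your proof is correct and follows essentially the same route as the paper: the easy inequality is the same direct computation with a nearest-integer approximant, and the hard inequality uses exactly the paper's key input, namely the unimodular matrices $S_{i,k}=W_{i+2k}W_i^{-1}$ of Lemmas \ref{red:lemmak} and \ref{red:lemmaGSik} (packaged in the paper as Lemma \ref{red:lemmaGS}) to transport a lattice point to one with small $|T-\xi U|$ while almost preserving the value of $G'$. The only difference is organizational --- you fix an arbitrary $(U_0,T_0)$ and bound $|G'(U_0,T_0)|$ from below before taking the infimum, whereas the paper starts from an $\epsilon$-near minimizer of $|G'|$ --- and your verification that $|U_i|\to\infty$ is sound.
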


\begin{proof}
We have $\disc(G') = \theta^2$ where $\theta:=(c+d\xi)(\xi-\xi')$,
and
\begin{equation}
 \label{red:thm:eq1}
 \begin{aligned}
 G'(U,T) &= (c+d\xi)(T-\xi U)(T-\xi'U)
         = (c+d\xi)(T-\xi U)^2 + \theta (T-\xi U)U.
 \end{aligned}
\end{equation}
Fix a real $\epsilon$ with $0<\epsilon <1$.  By definition, there
exists a non-zero point $(u,t)\in\bZ^2$ for which $|G'(u,t)| \le
\mu(G')+\epsilon$.  Then, by Lemma \ref{red:lemmaGS}, there exists
$S \in \SL_2(\bZ)$ such that the point $(q,p) =(u,t)\tS \in \bZ^2$
satisfies both $|q\xi-p| = \Big|(\xi,-1)S \begin{pmatrix} u\\t
\end{pmatrix}\Big| \le \epsilon$ and $|G'(q,p)-G'(u,t)| \le
\epsilon$.  Combining this with \eqref{red:thm:eq1}, we deduce that
\[
 \mu(G') +2\epsilon
 \ge
 |G'(q,p)|
 \ge
 |\theta|\,|q(q\xi-p)| - |c+d\xi|\epsilon^2.
\]
By letting $\epsilon$ tend to $0$, the integer $|q|$ tends to
infinity and we conclude that $\mu(G') \ge |\theta|\nu(\xi)$.

The reverse inequality follows directly from \eqref{red:thm:eq1} by
observing that, for each $\epsilon>0$, there exists a point
$(q,p)\in\bZ^2$ with $q\ge 1$, $|q\xi-p|\le \epsilon$ and $q|q\xi-p|
\le \nu(\xi)+\epsilon$ and so by \eqref{red:thm:eq1} we obtain
$\mu(G') \le |G'(q,p)| \le |\theta|(\nu(\xi)+\epsilon) +
|c+d\xi|\epsilon^2$ which upon letting $\epsilon \to 0$ gives
$\mu(G') \le |\theta|\nu(\xi)$.  This shows that $\mu(G') =
\sqrt{\disc(G')}\,\nu(\xi)$. The proof for $G''$ is similar.
\end{proof}

\begin{corollary}
 \label{red:thm:cor1}
 We have $\disp \nu(\xi) = \nu(\xi') = \nu(\xi'')$.
\end{corollary}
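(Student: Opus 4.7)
The plan is to apply Theorem \ref{red:thm} not only to $\xi$ but also to $\xi'$ and $\xi''$, and exploit the fact that the two real quadratic forms entering into the identity are essentially shared between $\xi$ and each of its conjugates.

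First I would invoke Lemma \ref{conj:lemmaG}, which tells us that $\xi'$ and $\xi''$ are themselves extremal numbers. Next, I would apply Corollary \ref{conj:cor:N.xi}: $\xi$ is itself one of the two conjugates of $\xi'$, and likewise one of the two conjugates of $\xi''$. Consequently, when Theorem \ref{red:thm} is applied to $\xi'$, one of the two real quadratic forms associated to $\xi'$ is of the shape $\lambda'(T-\xi' U)(T-\xi U)$ for some non-zero $\lambda' \in \bR$, and hence is a non-zero scalar multiple of
\[
 G'(U,T) = (c+d\xi)(T-\xi U)(T-\xi' U).
\]
Since the ratio $\mu(F)/\sqrt{\disc(F)}$ is invariant under $\bR^*\times \GL_2(\bZ)$ as noted in Section \ref{sec:M}, the contribution of that form to Theorem \ref{red:thm} (applied to $\xi'$) equals $\mu(G')/\sqrt{\disc(G')}$. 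Combining this with the conclusion of the theorem for both $\xi$ and $\xi'$, we obtain $\nu(\xi') = \mu(G')/\sqrt{\disc(G')} = \nu(\xi)$.

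The same reasoning, with $\xi''$ and $G''$ playing the roles of $\xi'$ and $G'$, yields $\nu(\xi'') = \mu(G'')/\sqrt{\disc(G'')} = \nu(\xi)$, which proves the corollary. The only point needing a little care is to confirm that the quadratic form attached to $\xi'$ through its conjugate $\xi$ really is proportional to $G'$; but this is immediate from Definition \ref{conj:def} because the expression defining $G'$ in Lemma \ref{conj:lemmaG} is symmetric in $\xi$ and $\xi'$ up to a non-zero real factor. I do not anticipate any significant obstacle, as all the heavy lifting has been done in Theorem \ref{red:thm}.
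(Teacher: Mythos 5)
Your proposal is correct and follows essentially the same route as the paper: invoke Corollary \ref{conj:cor:N.xi} to see that $\xi$ is a conjugate of $\xi'$ (and of $\xi''$), so that the quadratic form attached to $\xi'$ through $\xi$ is proportional to $G'$, and then apply Theorem \ref{red:thm} to both numbers. Your extra remark about the scalar factor and the invariance of $\mu(F)/\sqrt{\disc(F)}$ under $\bR^*$-scaling is a slightly more careful phrasing of what the paper states directly, but it is the same argument.
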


\begin{proof}
By Corollary \ref{conj:cor:N.xi}, $\xi$ is one of the two conjugates
of $\xi'$. Thus, $G'$ is also one of the two real quadratic
polynomials associated to $\xi'$ and so Theorem \ref{red:thm} gives
$\nu(\xi') = \mu(G')/\sqrt{\disc(G')} = \nu(\xi)$.  Similarly, we
find that $\nu(\xi'')=\nu(\xi)$.
\end{proof}

\begin{corollary}
 \label{red:thm:cor2}
For any $\um\in\Sigma^*$, we have $\disp \nu(\xi_\um) = \mu(G_\um)/3
= 1/3$ where $G_\um$ is as in Lemma \ref{conj:lemma:xi_um}.
\end{corollary}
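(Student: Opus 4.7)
The plan is to reduce the claim to a statement purely about the quadratic form $G_\um$ via Theorem~\ref{red:thm}, and then to read off the value of $\mu(G_\um)$ by comparing $G_\um$ with the Markoff forms $F_{\um^{(i)}}$ attached to the zigzag $(\um^{(i)})_{i\ge 1}$ defining $\xi_\um$.

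First, observe that Lemma~\ref{conj:lemma:xi_um} identifies $G_\um$, up to sign, as one of the real quadratic forms associated to $\xi_\um$ in the sense of Definition~\ref{conj:def}. A direct expansion
\[
 G_\um(U,T) = T^2 - (2\xi_\um+3)UT + \xi_\um(\xi_\um+3)U^2
\]
gives $\disc(G_\um) = (2\xi_\um+3)^2 - 4\xi_\um(\xi_\um+3) = 9$. Combined with Theorem~\ref{red:thm}, this yields $\nu(\xi_\um) = \mu(G_\um)/3$, so it remains to establish $\mu(G_\um)=1$. The upper bound $\mu(G_\um)\le 1$ is immediate, since $G_\um(0,1)=1$.

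For the lower bound, the idea is to compare $G_\um$ with the Markoff forms at each node of the zigzag. Write $\ux_{\um^{(i)}} = \matrice{m_i}{k_i}{k_i}{\ell_i}$. By Theorem~\ref{M:thmM} applied to $\um^{(i)}\in\Sigma^*$, one has $\mu(F_{\um^{(i)}})/\sqrt{\disc(F_{\um^{(i)}})} = 1/\sqrt{9-4m_i^{-2}}$, which simplifies to $\mu(F_{\um^{(i)}}) = m_i$ since $\disc(F_{\um^{(i)}}) = 9m_i^2-4$. Therefore, for every nonzero $(u,t)\in\bZ^2$,
\[
 \Big|\frac{1}{m_i}F_{\um^{(i)}}(u,t)\Big| \ge 1.
\]
Since $(1/m_i)F_{\um^{(i)}}(U,T) = (T-\alpha_{\um^{(i)}}U)(T-\alphabar_{\um^{(i)}}U)$ and since \eqref{ext:thm_xi:lim} gives $\alpha_{\um^{(i)}}\to \xi_\um$ and $\alphabar_{\um^{(i)}}\to \xi_\um-3$, passing to the limit in $i$ yields
\[
 |(t-\xi_\um u)(t-(\xi_\um-3)u)| \ge 1
\]
for every nonzero $(u,t)\in\bZ^2$. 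The left-hand side equals $|G_\um(u,t+3u)|$, and since $(u,t)\mapsto (u,t+3u)$ is a bijection of $\bZ^2\setminus\{(0,0)\}$ onto itself, this gives $\mu(G_\um)\ge 1$, completing the proof.

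The only step that requires any care is the passage to the limit in the last paragraph, but it is entirely routine: the coefficients of $(1/m_i)F_{\um^{(i)}}$ converge to those of $(T-\xi_\um U)(T-(\xi_\um-3)U)$ (equivalently, $k_i/m_i \to \xi_\um$ and $\ell_i/m_i\to\xi_\um^2$ thanks to $m_i\ell_i - k_i^2 =1$), so the inequality survives the limit for each fixed integer pair $(u,t)$.
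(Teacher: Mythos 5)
Your proof is correct and follows essentially the same route as the paper: both reduce to $\nu(\xi_\um)=\mu(G_\um)/3$ via Theorem~\ref{red:thm} and obtain the lower bound $\mu(G_\um)\ge 1$ by passing to the limit in the Markoff forms $F_{\um^{(i)}}$ along the zigzag, using Theorem~\ref{M:thmM}. The only (harmless) difference is the upper bound, where you evaluate $G_\um(0,1)=1$ directly instead of invoking, as the paper does, the fact that $\xi_\um$ is non-quadratic and hence has $\nu(\xi_\um)\le 1/3$.
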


\begin{proof}
Fix $\um\in \Sigma^*$.  By Theorem \ref{red:thm}, we have
$\nu(\xi_\um) = \mu(G_\um)/3$ since $\disc(G_\um)=9$.  According to
Theorem \ref{ext:thm_xi}, we also have $\xi_\um = \lim_{i\to\infty}
\alpha_{\um^{(i)}} = \lim_{i\to\infty} (\alphabar_{\um^{(i)}}+3)$
where $(\um^{(i)})_{i\ge 1}$ denote the maximal zigzag in the tree
\eqref{M:Mtreex} originating from $\um$.  In terms of the quadratic
forms \eqref{M:F_um}, this means that
\[
 \frac{G_\um}{3}
  = \lim_{i\to \infty}
    \frac{F_{\um^{(i)}}}{\sqrt{\disc(F_{\um^{(i)}})}}
\]
and thus $\mu(G_\um)/3 \ge \limsup_{i\to\infty}
\mu(F_{\um^{(i)}})/\sqrt{\disc(F_{\um^{(i)}})}$.  Finally, Theorem
\ref{M:thmM} shows that the latter limit superior is equal to $1/3$.
This gives $\nu(\xi_\um)\ge 1/3$ and, since $\xi_\um$ is not
quadratic, we conclude that $\nu(\xi_\um)=1/3$.
\end{proof}

%%%%%%%%%%%%%%%%%%%%%%%%%%%%%%%%%%%%%%%%%%%%%%%%%%%%%%%%%%%%%%%%
%
%    Continued fraction expansions
%
%%%%%%%%%%%%%%%%%%%%%%%%%%%%%%%%%%%%%%%%%%%%%%%%%%%%%%%%%%%%%%%%

\section{Continued fraction expansions}
 \label{sec:cont}

In this section we define notions of \emph{reduced} and
\emph{balanced} extremal numbers and we describe the continued
fraction expansions of the extremal numbers $\xi_\um$ introduced in
\S\ref{sec:ext}.  To begin, we first set additional notation and
recall some basic facts about continued fraction expansions.

Let $\cW$ denote the monoid of words on the set $\{1,2,3,\dots\}$ of
positive integers with the product given by concatenation of words.
For any non-empty word $\uw$ of $\cW$ written either as a sequence
$\uw=(a_1,\dots,a_k)$ or as a string $\uw=a_1\cdots a_k$, we define
\begin{equation*}
 \varphi(\uw)
 = \matrice{a_1}{1}{1}{0} \cdots \matrice{a_k}{1}{1}{0}
 \in \GL_2(\bZ),
\end{equation*}
and for the empty word $\emptyset$, we set $\varphi(\emptyset)=I$.
Then the map $\varphi\colon\cW\to \GL_2(\bZ)$ is a morphism of
monoids and, with our convention that the norm of a matrix is the
maximum of the absolute values of its coefficients, we obtain:

\begin{lemma}
 \label{cont:lemma:ineq_norm}
 $\|\varphi(\uw_1)\|\,\|\varphi(\uw_2)\| \le \|\varphi(\uw_1\uw_2)\|
\le 2\|\varphi(\uw_1)\|\, \|\varphi(\uw_2)\|$ for any
$\uw_1,\uw_2\in\cW$.
\end{lemma}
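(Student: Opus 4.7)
The plan is to reduce both inequalities to elementary facts about $2\times 2$ matrices with nonnegative integer entries. Since each generator $\matrice{a}{1}{1}{0}$ has nonnegative entries, it is immediate by induction on the length of $\uw$ that every $\varphi(\uw)$ does too.

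For the upper bound, no structure beyond the matrix product is needed: for any two real $2\times 2$ matrices $A$ and $B$, each entry of $AB$ is a sum of two products of entries of $A$ and $B$, whence $\|AB\| \le 2\|A\|\,\|B\|$. Taking $A = \varphi(\uw_1)$ and $B = \varphi(\uw_2)$ yields the right-hand inequality directly.

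For the lower bound, the main step is to show that the upper-left entry of $\varphi(\uw)$ realizes $\|\varphi(\uw)\|$. Writing $\varphi(a_1,\dots,a_k)$ in the form $\matrice{\alpha_k}{\alpha_{k-1}}{\gamma_k}{\gamma_{k-1}}$, where the $\alpha_j$ and $\gamma_j$ satisfy the classical continuant recurrences $\alpha_j = a_j\alpha_{j-1} + \alpha_{j-2}$, $\gamma_j = a_j\gamma_{j-1} + \gamma_{j-2}$ with initial values $\alpha_0=1$, $\alpha_1=a_1$, $\gamma_0=0$, $\gamma_1=1$, a short induction using $a_j\ge 1$ gives $0\le\gamma_{k-1}\le\gamma_k\le\alpha_k$ and $\alpha_{k-1}\le\alpha_k$, so $\alpha_k$ is the largest entry. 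Writing then $\varphi(\uw_i)$ as $\matrice{A_i}{B_i}{C_i}{D_i}$ with $A_i = \|\varphi(\uw_i)\|$ and $B_i, C_i, D_i \ge 0$, the upper-left entry of $\varphi(\uw_1\uw_2) = \varphi(\uw_1)\varphi(\uw_2)$ equals $A_1A_2 + B_1C_2 \ge A_1A_2$, which yields the left-hand inequality.

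There is no serious obstacle; both bounds are routine, the only mild subtlety being the identification of the norm-realizing entry of $\varphi(\uw)$, which is handled by the inductive monotonicity above. The case of an empty word is trivial since $\varphi(\emptyset) = I$ has norm $1$ and the same upper-left entry.
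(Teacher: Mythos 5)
Your proof is correct and follows essentially the same route as the paper: both hinge on the observation that every $\varphi(\uw)$ has nonnegative entries with the upper-left entry realizing the norm, so that the upper-left entry of the product already gives the lower bound, while the upper bound is the generic $2\times 2$ estimate $\|AB\|\le 2\|A\|\,\|B\|$. The paper merely states this structural fact about $\varphi(\uw)$ without proof, whereas you verify it via the continuant recurrences; no further comment is needed.
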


\begin{proof}
This follows by observing that, for any non-empty word $\uw\in\cW$,
the matrix $\varphi(\uw)$ takes the form $\matrice{a}{b}{c}{d}$ with
$a\ge \max\{b,c\}$ and $\min\{b,c\}\ge d\ge 0$, and so
$\|\varphi(\uw)\| = a$.
\end{proof}

We say that an irrational real quadratic number $\alpha$ is
\emph{reduced} if $0<\alpha<1$ and $\alphabar<-1$ where $\alphabar$
denotes the conjugate of $\alpha$ over $\bQ$. Such a number is
characterized as follows:

\begin{lemma}
 \label{cont:lemma:reduced}
Let $\alpha$ be an irrational real quadratic number.  Then $\alpha$
is reduced if and only if its continued fraction expansion takes the
form $\alpha = [0,\Pi^\infty] = [0,\Pi,\Pi,\dots]$ for some
non-empty word $\Pi=(a_1,\dots,a_k)$ in $\cW$.  When this happens
the conjugate $\alphabar$ of $\alpha$ is given by $-\alphabar =
[(\Pi^*)^\infty]=[\Pi^*,\Pi^*,\dots]$ where $\Pi^* =
(a_k,\dots,a_1)$ is the reverse of $\Pi$.  Moreover we have
$\varphi(\Pi)\cdot(1/\alpha) = 1/\alpha$ and $H(\alpha) \le
\|\varphi(\Pi)\|$.

Conversely, if $0<\alpha<1$ and if
$\varphi(\Pi)\cdot(1/\alpha)=1/\alpha$ for some non-empty word
$\Pi\in\cW$, then $\alpha=[0,\Pi^\infty]$ and so $\alpha$ is
reduced.
\end{lemma}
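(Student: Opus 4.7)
The plan is to reduce the lemma to the classical theorem of Galois, which characterizes the purely periodic simple continued fractions: a real quadratic irrational $\beta$ satisfies $\beta = [\Pi^\infty]$ for some non-empty word $\Pi \in \cW$ if and only if $\beta > 1$ and $\betabar \in (-1, 0)$, and moreover in that case $-1/\betabar = [(\Pi^*)^\infty]$.  The key observation is that $\alpha$ is reduced in the sense of the lemma exactly when $1/\alpha$ is Galois-reduced, since $0 < \alpha < 1$ is equivalent to $1/\alpha > 1$ and $\alphabar < -1$ is equivalent to $1/\alphabar \in (-1, 0)$.

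Given this reduction, the forward direction is immediate: Galois applied to $1/\alpha$ produces $\Pi = (a_1,\dots,a_k)$ with $1/\alpha = [\Pi^\infty]$, whence $\alpha = [0, \Pi^\infty]$; the companion identity (applied to the same $\beta = 1/\alpha$ with conjugate $1/\alphabar$) gives $-\alphabar = -1/(1/\alphabar) = [(\Pi^*)^\infty]$.  The matrix identity $\varphi(\Pi)\cdot(1/\alpha) = 1/\alpha$ is the standard Möbius reformulation of the self-similarity $1/\alpha = [a_1,\dots,a_k, 1/\alpha]$, combined with the identity $[a_1,\dots,a_k, x] = \varphi(a_1\cdots a_k)\cdot x$ valid for any $x > 0$.

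To deduce the height bound, I write $\varphi(\Pi) = \matrice{p}{p'}{q}{q'}$.  A short induction on the length of $\Pi$ shows that all entries are non-negative integers with $p, q, p' \ge 1$.  The fixed-point relation then unfolds to $q(1/\alpha)^2 + (q' - p)(1/\alpha) - p' = 0$, so the non-zero integer polynomial $qT^2 + (q' - p)T - p'$ is a non-zero integer multiple of the minimal polynomial of $1/\alpha$, giving $H(1/\alpha) \le \max\{q, |q' - p|, p'\} \le \|\varphi(\Pi)\|$, where the last inequality uses $|q' - p| \le \max\{p, q'\}$ by non-negativity of the entries.  Since the minimal polynomials of $\alpha$ and $1/\alpha$ are reverses of one another, $H(\alpha) = H(1/\alpha)$ and the stated bound follows.

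For the converse, assume $0 < \alpha < 1$ and $\varphi(\Pi)\cdot(1/\alpha) = 1/\alpha$ for some non-empty $\Pi \in \cW$.  Iterating the fixed-point relation yields $1/\alpha = [\Pi^n, 1/\alpha]$ for every $n \ge 1$, and since $1/\alpha > 0$ the right-hand side converges to $[\Pi^\infty]$ as $n \to \infty$, so $\alpha = [0, \Pi^\infty]$; the forward direction then shows $\alpha$ is reduced.  The proof contains no serious obstacle beyond recalling Galois' theorem; the only point requiring care is the selection of the correct fixed point of $\varphi(\Pi)$ in this last step, which is pinned down by the positivity hypothesis $0 < \alpha < 1$ (the other root of $qT^2 + (q'-p)T - p'$ is non-positive, since the product of the two roots is $-p'/q < 0$).
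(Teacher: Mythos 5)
Your proposal is correct and follows essentially the same route as the paper: the first two assertions are delegated to Galois' theorem (via the observation that $\alpha$ is reduced exactly when $1/\alpha$ is Galois-reduced), and the remaining assertions come from the equivalence of $\varphi(\Pi)\cdot(1/\alpha)=1/\alpha$ with $1/\alpha=[\Pi,1/\alpha]$ and hence with $\alpha=[0,\Pi^\infty]$, together with the short computation unfolding the fixed-point relation into $q(1/\alpha)^2+(q'-p)(1/\alpha)-p'=0$ to get the height bound. The paper leaves that computation and the convergence step implicit; you have merely written them out.
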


\begin{proof}
The first two assertions are due to E.~Galois \cite{Ga}.  The other two follow
from the fact that the condition $\varphi(\Pi)\cdot(1/\alpha)=1/\alpha$ is
equivalent to $1/\alpha=[\Pi,1/\alpha]$, which is itself equivalent to
$\alpha=[0,\Pi^\infty]$, while a short computation shows that it implies
$H(\alpha)\le \|\varphi(\Pi)\|$.
\end{proof}

Since any extremal number comes with exactly two conjugates, it is
natural to transpose the notion of reduced irrational real quadratic
number to extremal numbers by stating:

\begin{definition}
 \label{cont:def:ext_red}
An extremal number $\xi$ is \emph{reduced} if $0<\xi<1$ and if its
conjugates $\xi'$ and $\xi''$ satisfy $\xi'<-1$ and $\xi''<-1$.
\end{definition}

\begin{lemma}
 \label{cont:lemma:ext_red}
Let $\xi = [a_0,a_1,a_2,\dots]$ be an extremal number in continued
fraction form.  For each sufficiently large index $i\ge 1$, the
number $\xi_i:=[0,a_i,a_{i+1},a_{i+2},\dots]$ is a reduced extremal
number in the $\GL_2(\bZ)$-equivalence class of $\xi$. Moreover, for
any $i\ge 1$ for which $\xi_i$ is reduced, the two conjugates of
$\xi_{i+1}$ belong to the open interval $(-a_i-1,-a_i)$.
\end{lemma}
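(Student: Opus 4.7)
The plan is to prove the ``moreover'' clause first — since it packages the induction step for the reduced property — and then to seed the induction at a large index.

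For the moreover clause, the identity $1/\xi_i = a_i + \xi_{i+1}$ reads $\xi_{i+1} = g \cdot \xi_i$ with $g = \matrice{-a_i}{1}{1}{0} \in \GL_2(\bZ)$. Corollary~\ref{conj:cor:A.xi} gives the conjugates of $\xi_{i+1}$ as $g \cdot \xi_i' = -a_i + 1/\xi_i'$ and $g \cdot \xi_i'' = -a_i + 1/\xi_i''$. The reduced hypothesis $\xi_i', \xi_i'' < -1$ puts $1/\xi_i'$ and $1/\xi_i''$ in $(-1,0)$, so both conjugates of $\xi_{i+1}$ lie in $(-a_i-1,-a_i)$, as claimed. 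Since $a_i \ge 1$, this interval is contained in $(-\infty,-1)$, and together with $\xi_{i+1} \in (0,1)$ this shows $\xi_{i+1}$ is also reduced, providing the induction step.

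For the seed, I would set $g_i = \varphi(a_0, a_1, \ldots, a_{i-1}) = \matrice{p_{i-1}}{p_{i-2}}{q_{i-1}}{q_{i-2}}$, where $p_j/q_j$ are the convergents of $\xi$. The identity $\xi = g_i \cdot (1/\xi_i)$ gives $\xi_i = (J g_i^{-1}) \cdot \xi$ with $J = \matrice{0}{1}{1}{0}$, exhibiting $\xi_i$ as $\GL_2(\bZ)$-equivalent to $\xi$ and hence extremal. A direct computation combined with Corollary~\ref{conj:cor:A.xi} yields
\[
 \xi_i' = -\frac{q_{i-1}}{q_{i-2}}\cdot\frac{\xi' - p_{i-1}/q_{i-1}}{\xi' - p_{i-2}/q_{i-2}},
\]
and similarly for $\xi_i''$. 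Since $p_j/q_j \to \xi$ and $\xi' \ne \xi$ (Lemma~\ref{conj:lemmaG}), the second factor tends to $1$, while $q_{i-1}/q_{i-2} \ge 1 + 1/q_{i-2}$ for $i \ge 3$ via the recursion $q_{i-1} = a_{i-1} q_{i-2} + q_{i-3}$. Comparing these two scales yields $\xi_i' < -1$ strictly for all large $i$, and the same argument gives $\xi_i'' < -1$; combined with $\xi_i \in (0,1)$, this produces the base case.

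The chief technical point — and really the only step beyond a direct unpacking of the definitions — is the quantitative comparison yielding strict $\xi_i' < -1$: the error in the convergent-ratio factor is $O(1/q_{i-2}^2)$, whereas $q_{i-1}/q_{i-2}$ exceeds $1$ by at least $1/q_{i-2}$, so the first-order term wins for all sufficiently large $i$. The only input needed from the theory of extremal numbers beyond Corollary~\ref{conj:cor:A.xi} is the fact that $\xi$ differs from each of its conjugates (Lemma~\ref{conj:lemmaG}), which ensures the denominator $\xi' - p_{i-2}/q_{i-2}$ stays bounded away from zero.
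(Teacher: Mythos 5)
Your proof is correct and follows essentially the same route as the paper: both use Corollary~\ref{conj:cor:A.xi} to propagate the conjugates through the maps $x\mapsto 1/x-a_i$, deduce $\xi_i',\xi_i''<-1$ for large $i$ from $\xi'\neq\xi\neq\xi''$, and obtain the ``moreover'' clause from $1/\xi_i',1/\xi_i''\in(-1,0)$. The only difference is that the paper dispatches the ``sufficiently large $i$'' step by appealing to the standard continued-fraction fact (the numbers $\xi',\xi''$ do not share the expansion of $\xi$), whereas you supply the explicit quantitative estimate with convergents — a harmless elaboration of the same idea.
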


\begin{proof}
Let $\xi'$ and $\xi''$ denote the conjugates of $\xi$.  By Corollary
\ref{conj:cor:A.xi}, each $\xi_i$ is extremal with conjugates
$\xi'_i$ and $\xi''_i$ given recursively by
\[
 \xi'_1=\xi'-a_0,
 \quad
 \xi''_1=\xi''-a_0,
 \quad
 \xi'_{i+1} = \frac{1}{\xi'_i}-a_{i},
 \quad
 \xi''_{i+1} = \frac{1}{\xi''_i}-a_{i}
 \quad
 (i\ge 1).
\]
Moreover, since $\xi'$ and $\xi''$ are distinct from $\xi$, they do
not have the same continued fraction expansion, and so have
$\xi'_i<-1$ and $\xi''_i<-1$ for each sufficiently large $i$. For
each of those $i$, the number $\xi_i$ is reduced.  The last
assertion is clear.
\end{proof}

In particular, each extremal number is equivalent to infinitely many
reduced ones.  We now show that this ambiguity disappears with the
following stronger notion.

\begin{definition}
An extremal number is \emph{balanced} if it is reduced and if its
conjugates have distinct integral parts.
\end{definition}

\begin{proposition}
 \label{cont:prop:balanced}
Any extremal number is equivalent to a unique balanced extremal
number.
\end{proposition}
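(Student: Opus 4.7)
The plan is to treat existence and uniqueness separately, leveraging the explicit transformation rule for conjugates of continued fraction shifts provided by Lemma~\ref{cont:lemma:ext_red}.

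For existence, I would first apply Lemma~\ref{cont:lemma:ext_red} to replace $\xi$ by an equivalent reduced extremal number. If this representative is already balanced, we are done; otherwise the conjugates $\xi'$ and $\xi''$ share an integer part and both lie in a common interval $(-k-1,-k)$ with $k\ge 1$. I would then apply the unimodular transformation $T_k(\zeta)=1/(k+\zeta)$ coming from $\smash{\matrice{0}{1}{1}{k}}\in\GL_2(\bZ)$: direct computation gives $T_k(\xi)\in(1/(k+1),1/k)$ and $T_k(\xi'),T_k(\xi'')<-1$, so $T_k(\xi)$ is again reduced, and then iterate. To establish termination, write $\xi'=-k-\gamma$ and $\xi''=-k-\gamma'$ with $\gamma,\gamma'\in(0,1)$; a short calculation gives $T_k(\xi')=-1/\gamma$ and $T_k(\xi'')=-1/\gamma'$, so one step of the iteration replaces $(\gamma,\gamma')$ by $(\{1/\gamma\},\{1/\gamma'\})$ --- the simultaneous Gauss map --- and the transformed extremal number is balanced precisely when $\lfloor 1/\gamma\rfloor\ne\lfloor 1/\gamma'\rfloor$, i.e.\ when $\gamma$ and $\gamma'$ have distinct leading partial quotients. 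Since $\xi$ is non-quadratic, $\xi'\neq\xi''$ are distinct irrationals (conjugates of an extremal number are themselves extremal), so $\gamma\neq\gamma'$ are distinct irrationals in $(0,1)$; their continued fraction expansions must differ at some finite position, and the iteration terminates at a balanced representative after finitely many steps.

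For uniqueness, suppose $\xi=[0,a_1,a_2,\dots]$ and $\tilde\xi=[0,b_1,b_2,\dots]$ are both balanced and $\GL_2(\bZ)$-equivalent. Since both are irrational and lie in $(0,1)$, their continued fractions share a common tail, so there exist indices $N,M\ge 1$ with $\xi_N=\tilde\xi_M$; choose such a pair minimising $N+M$. If $N\ge 2$ then $\xi_{N-1}$ is reduced (by induction using Lemma~\ref{cont:lemma:ext_red}), so by the last assertion of that lemma both conjugates of $\xi_N$ lie in $(-a_{N-1}-1,-a_{N-1})$; hence $\xi_N$ is not balanced. If moreover $M=1$ this contradicts $\tilde\xi=\xi_N$ being balanced, while if $M\ge 2$ the analogous reasoning applied to $\tilde\xi$ forces $a_{N-1}=b_{M-1}$ (the common integer part of the two collapsed conjugates), whence $\xi_{N-1}=\tilde\xi_{M-1}$ in contradiction with the minimality of $N+M$. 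A symmetric argument rules out $N=1,M\ge 2$, so $(N,M)=(1,1)$ and $\xi=\tilde\xi$.

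The main technical point will be the termination step for existence: one must verify carefully that the first partial quotients $\lfloor 1/\gamma\rfloor$ and $\lfloor 1/\gamma'\rfloor$ exactly encode the integer parts of the transformed conjugates, so that disagreement of these quotients at the first position where the continued fractions of $\gamma$ and $\gamma'$ diverge corresponds precisely to the balanced condition on the resulting extremal number.
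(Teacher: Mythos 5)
Your proof is correct and follows essentially the same route as the paper: your existence step, which prepends the common leading partial quotients of the two conjugates' expansions onto that of $\xi$ (the Gauss-map description of the effect on $(\gamma,\gamma')$ is the paper's digit-peeling recursion $\xi_i=1/(a_i+\xi_{i+1})$ in different clothing, terminating because $\xi'\neq\xi''$), and your uniqueness step, combining Serret's theorem with the localization of conjugates from Lemma~\ref{cont:lemma:ext_red}, both match the paper's argument. No gaps.
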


\begin{proof}
\emph{Existence:} Let $\xi_1$ be an extremal number with conjugates
denoted $\xi'_1$ and $\xi''_1$.  In order to show that $\xi_1$ is
equivalent to a balanced extremal number, we may assume, in view of
Lemma \ref{cont:lemma:ext_red}, that it is reduced.  Then, we find
continued fraction expansions of the form
\[
   \xi_1  = [0, a_1, a_2, a_3, \dots], \quad
 -\xi'_1  = [a'_0, a'_{-1}, a'_{-2}, \dots], \quad
 -\xi''_1 = [a''_0, a''_{-1}, a''_{-2}, \dots],
\]
for sequences of positive integers $(a_i)_{i\ge 1}$, $(a'_i)_{i\le
0}$ and $(a''_i)_{i\le 0}$.  If $a'_0\neq a''_0$, then $\xi_1$ is
already balanced.  Otherwise, since $\xi'_1\neq \xi''_1$, there
exists a largest integer $k\le -1$ such that $a'_k\neq a''_k$. For
each $i=0,-1,\dots,k+1$, we put $a_i:=a'_i=a''_i$ and define
recursively $\xi_i := 1/(a_i+\xi_{i+1})$, $\xi'_i :=
1/(a_i+\xi'_{i+1})$ and $\xi''_i := 1/(a_i+\xi''_{i+1})$.  For each
of those $i$, we have
\[
   \xi_i  = [0, a_i, a_{i+1}, a_{i+2}, \dots], \quad
 -\xi'_i  = [a'_{i-1}, a'_{i-2},  \dots], \quad
 -\xi''_i = [a''_{i-1}, a''_{i-2}, \dots],
\]
and, by Corollary \ref{conj:cor:A.xi}, the number $\xi_i$ is
extremal with conjugates $\xi'_i$ and $\xi''_i$.  In particular,
$\xi$ is equivalent to $\xi_{k+1}$ which is balanced.

\emph{Uniqueness:} Let $\xi$ and $\eta$ be equivalent balanced
extremal numbers.  In order to complete the proof of the
proposition, it remains only to show that $\xi=\eta$.  To this end,
write $\xi=[0,a_1,a_2,\dots]$ and $\eta=[0,b_1,b_2,\dots]$.  Since
$\xi$ and $\eta$ are equivalent, it follows from Serret's theorem
\cite[Ch.~I, Thm.~6B]{Sc}, that there exist integers $k,\ell\ge 1$
such that $a_{k+i} = b_{\ell+i}$ for each $i\ge 0$.  Choose $k$
minimal with this property and define $\zeta = [0,a_k,a_{k+1},\dots]
= [0,b_\ell,b_{\ell+1},\dots]$.  If $k>1$, Lemma
\ref{cont:lemma:ext_red} shows that $\zeta$ has conjugates in the
interval $(-a_{k-1}-1, -a_{k-1})$.  Similarly, if $\ell>1$, it shows
that these conjugates lie in the interval $(-b_{\ell-1}-1,
-b_{\ell-1})$.  If $k>1$ and $\ell>1$, this means that
$a_{k-1}=a_{\ell-1}$, against the choice of $k$.  Thus, we must have
$k=1$ or $\ell=1$, and so $\zeta$ is equal to $\xi$ or $\eta$.  In
particular, $\zeta$ is balanced.  In view of the above, this is
possible only if $k=\ell=1$ which means that $\zeta=\xi=\eta$ as
requested.
\end{proof}

The following simple fact is the only combinatorial property that we
will need about the continued fraction expansion of general extremal
numbers.

\begin{proposition}
 \label{cont:prop:cube}
Let $\xi=[0,a_1,a_2,a_3,\dots]$ be the continued fraction expansion
of an extremal real number from the interval $(0,1)$.  There are
finitely many finite words $\Pi\in\cW$ whose cube is a prefix of $P
:= a_1a_2a_3\cdots$.
\end{proposition}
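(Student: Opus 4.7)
The plan is to associate to each candidate word $\Pi$ a reduced quadratic real number $\alpha_\Pi := [0,\Pi^\infty]$ and then to exploit the tension between two facts: the hypothesis that $\Pi^3$ is a prefix of $P$ forces $\alpha_\Pi$ to approximate $\xi$ very well, with exponent $6$, while Proposition \ref{conj:prop:alpha} caps the exponent of approximation to $\xi$ by quadratic numbers at $2\gamma+2 < 6$ for the best approximations, and at $4$ otherwise.

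The first step is the approximation estimate. Since $\xi = [0,a_1,a_2,\dots]$ and $\alpha_\Pi = [0,\Pi^\infty]$ share the same first $3|\Pi|$ partial quotients, both lie between two consecutive convergents at level $3|\Pi|$, forming an interval of length $\le 2/q_{3|\Pi|}^2$. By Lemma \ref{cont:lemma:ineq_norm} iterated twice, $q_{3|\Pi|} \asymp \|\varphi(\Pi^3)\| \asymp \|\varphi(\Pi)\|^3$, hence $|\xi - \alpha_\Pi| \ll \|\varphi(\Pi)\|^{-6}$. The second step is the standard bound $H(\alpha_\Pi) \le \|\varphi(\Pi)\|$ from Lemma \ref{cont:lemma:reduced}.

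The third step invokes Proposition \ref{conj:prop:alpha} in two cases. If $\alpha_\Pi \ne \alpha_i$ for every $i \ge i_0$, then $|\xi-\alpha_\Pi| \gg H(\alpha_\Pi)^{-4} \ge \|\varphi(\Pi)\|^{-4}$, so combining with the upper bound gives $\|\varphi(\Pi)\|^{-4} \ll \|\varphi(\Pi)\|^{-6}$, forcing $\|\varphi(\Pi)\|^2$ bounded. If $\alpha_\Pi = \alpha_i$ for some $i \ge i_0$, then $|\xi-\alpha_\Pi| \asymp H(\alpha_\Pi)^{-2\gamma-2}$, which together with $H(\alpha_\Pi)\le \|\varphi(\Pi)\|$ yields $\|\varphi(\Pi)\|^{-2\gamma-2} \ll \|\varphi(\Pi)\|^{-6}$, i.e.\ $\|\varphi(\Pi)\|^{4-2\gamma} \ll 1$. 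Since $4-2\gamma = 3-\sqrt{5} > 0$, this again forces $\|\varphi(\Pi)\|$ bounded. The finitely many exceptional values $\alpha_i$ with $i<i_0$ contribute only finitely many $\Pi$, because $\xi$ is transcendental (hence $P$ is not eventually periodic), so for each such $\alpha$ only finitely many finite words $\Pi$ with $[0,\Pi^\infty]=\alpha$ can have $\Pi^3$ as a prefix of $P$.

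In every case $\|\varphi(\Pi)\|$ is bounded by a constant depending only on $\xi$. Since for any non-empty word $\Pi$ the entry $\|\varphi(\Pi)\|$ grows at least like the Fibonacci numbers in $|\Pi|$ and dominates every letter of $\Pi$, this bound constrains both the length of $\Pi$ and the letters appearing in it, leaving only finitely many possibilities. The main obstacle is the exponent calculation itself: the whole argument hinges on the inequality $2\gamma+2 < 6$, meaning that a cubic repetition at the beginning of the continued fraction would produce a quadratic approximation better than the best ones allowed by the Davenport--Schmidt theory for extremal numbers.
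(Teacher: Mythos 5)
Your proposal is correct and follows essentially the same route as the paper: associate to $\Pi$ the reduced quadratic number $[0,\Pi^\infty]$, use Lemma \ref{cont:lemma:ineq_norm} and Lemma \ref{cont:lemma:reduced} to get $|\xi-\alpha_\Pi|\ll H(\alpha_\Pi)^{-6}$, and conclude via Proposition \ref{conj:prop:alpha} since both exponents $4$ and $2\gamma+2$ are less than $6$. Your version merely spells out the case split inside Proposition \ref{conj:prop:alpha} and the final counting step more explicitly than the paper does.
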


\begin{proof}
Suppose that $\Pi^3$ is a prefix of $P$ for some finite word
$\Pi\in\cW$, and consider the quadratic real number
$\alpha:=[0,\Pi^\infty]$.  By Lemma \ref{cont:lemma:reduced}, we
have $H(\alpha)\le \varphi(\Pi)$ and the theory of continued
fractions shows that
\[
 |\xi-\alpha| \le 2\, \|\varphi(\Pi^3)\|^{-2}.
\]
Thanks to Lemma \ref{cont:lemma:ineq_norm}, we deduce from this that
$|\xi-\alpha| \le 2\, \|\varphi(\Pi)\|^{-6} \le 2 H(\alpha)^{-6}$.
By Proposition \ref{conj:prop:alpha}, this holds only for finitely
many quadratic numbers $\alpha$.  In turn, this means that
$\|\varphi(\Pi)\|$ is bounded above and so $\Pi$ belongs to a finite
set of prefixes of $P$.
\end{proof}

We now turn to a characterization of the continued fraction
expansions of the extremal numbers $\xi_\um$. In view of the
formulas \eqref{ext:thm_xi:lim}, the first step is to describe the
continued fraction expansion of the quadratic numbers $\alpha_\um$.
For this, we denote by $\cW_0$ the sub-monoid of $\cW$ generated by
the words $\ua = (1,1) = 1\,1$ and $\ub = (2,2) = 2\,2$.  We let the
endomorphims of $\cW_0$ act on the right on $\cW_0$ and denote by
$U$ and $V$ the specific such endomorphisms determined by the
conditions
\begin{equation}
 \label{cont:endo_UV}
  \ua^U=\ua\ub,\quad \ub^U=\ub \et \ua^V=\ua, \quad \ub^V=\ua\ub.
\end{equation}
as in \cite[\S3]{Bo}.  Building on these, we form a tree of endomorphisms of
$\cW_0$:
\begin{equation}
 \label{cont:tree:endo}
 \begin{matrix}
  I\\[-3pt]
  \acci\\[3pt]
  V  \hspace{154pt} U\\[-3pt]
  \accii \hspace{80pt} \accii\\[3pt]
  \hspace{4pt} V^2 \hspace{62pt} UV \hspace{64pt}
  VU \hspace{66pt} U^2\\[-3pt]
  \acciii \hspace{40pt} \acciii \hspace{40pt}
  \acciii \hspace{40pt} \acciii\\[3pt]
  \cdots \hspace{25pt} \cdots \hspace{25pt} \cdots \hspace{25pt}
  \cdots \hspace{25pt}
  \cdots \hspace{25pt} \cdots \hspace{25pt} \cdots \hspace{25pt}
  \cdots
  \end{matrix}
\end{equation}
where each node $\psi$ has successors $V\psi$ on the left and
$U\psi$ on the right.  For each node $\um$ of the Markoff tree
\eqref{M:Mtree}, we denote by $\psi_\um$ the endomorphism of $\cW_0$
which occupies the same position.  This gives for example
$\psi_{(5,1,2)}=I$ and $\psi_{(194,13,5)}=UV$.

\begin{lemma}
 \label{cont:lemma:Pi_m}
For each $\um\in\Sigma$, the quadratic number $\alpha_\um$ given by
\eqref{M:alpha_m} is reduced and its continued fraction expansion is
$\alpha_\um = [0,(\Pi_\um)^\infty]$ where $\Pi_\um = \ua$ if
$\um=(1,1,1)$, $\Pi_\um = \ub$ if $\um=(2,1,1)$ and $\Pi_\um =
(\ua\ub)^{\psi_\um}$ otherwise.
\end{lemma}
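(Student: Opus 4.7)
The plan is to invoke the converse of Lemma \ref{cont:lemma:reduced}: once I show (a) $0<\alpha_\um<1$ and (b) $\varphi(\Pi_\um)\cdot(1/\alpha_\um)=1/\alpha_\um$, both the expansion $\alpha_\um=[0,\Pi_\um^\infty]$ and the reducedness of $\alpha_\um$ follow at once. (The word $\Pi_\um$ is non-empty because $\ua,\ub$ are non-empty and the endomorphisms $U,V$ of $\cW_0$ preserve non-emptiness.) For (a), since $\alpha_\um$ is the larger root of $F_\um(1,T)=mT^2+(3m-2k)T+(\ell-3k)$ by \eqref{M:alpha_m}, it suffices to verify $F_\um(1,0)<0<F_\um(1,1)$. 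Using $m\ell=k^2+1$, the value $F_\um(1,1)=4m-5k+\ell$ rewrites as $\bigl((4m-k)(m-k)+1\bigr)/m$, which is positive by the bound $k\le m$ from Proposition \ref{M:propStree}; and $F_\um(1,0)=\ell-3k\le m-3k\le -k<0$ by the bounds $\ell\le m\le 2k$ from the same proposition. The two degenerate cases $\um=(1,1,1)$ and $(2,1,1)$ are direct.

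For (b) the crux is the matrix identity $\ux_\um M=\varphi(\Pi_\um)$ for all $\um\in\Sigma$. Granting it, if $\varphi(\Pi_\um)=\matrice{a}{b}{c}{d}$, then \eqref{M:F_um} gives $F_\um(1,T)=bT^2+(a-d)T-c$, so the algebraic relation $F_\um(1,\alpha_\um)=0$ rearranges to $(a+b\alpha_\um)/(c+d\alpha_\um)=1/\alpha_\um$, which, in the notation of \eqref{M:eq:actionGL}, is precisely the fixed-point condition. The two degenerate cases reduce to the direct computations $\ux_{(1,1,1)}M=\matrice{2}{1}{1}{1}=\varphi(\ua)$ and $\ux_{(2,1,1)}M=\matrice{5}{2}{2}{1}=\varphi(\ub)$.

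The remaining cases I prove by induction on level in the tree \eqref{M:Mtree}, strengthening the hypothesis as follows: at each node, with associated triple $(\ux_\um,\ux_1,\ux_2)$ from \eqref{M:propStree:tree},
\[
 \ux_\um M=\varphi(A_\um B_\um),\qquad \ux_1 M=\varphi(A_\um),\qquad \ux_2 M=\varphi(B_\um),
\]
where $A_\um:=\ua^{\psi_\um}$ and $B_\um:=\ub^{\psi_\um}$. At the root $(5,1,2)$, $\psi_\um=I$, so $A_\um=\ua$, $B_\um=\ub$, and the three equalities follow from the degenerate base cases together with a direct computation for $\ux_{(5,1,2)}M$. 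At the left successor $\um_L$, Proposition \ref{M:propStree} provides the new triple $(\ux_1 M\ux_\um,\ux_1,\ux_\um)$, while $\psi_{\um_L}=V\psi_\um$ combined with \eqref{cont:endo_UV} forces $A_{\um_L}=A_\um$ and $B_{\um_L}=A_\um B_\um$; the three identities at $\um_L$ then follow from the inductive ones by matrix multiplication, using that $\varphi$ is a morphism of monoids. The right successor is symmetric, with $\psi_{\um_R}=U\psi_\um$, $A_{\um_R}=A_\um B_\um$, $B_{\um_R}=B_\um$.

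The main obstacle is isolating this strengthened hypothesis: recognizing that $\ux_\um M$, and not $\ux_\um$ itself, should equal $\varphi(\Pi_\um)$, and that one must simultaneously track the analogous identities for $\ux_1 M$ and $\ux_2 M$ so that the tree recurrence meshes with the substitutions $U$ and $V$. Once this is set up, every inductive step is mechanical.
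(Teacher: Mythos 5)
Your proof is correct and follows essentially the same route as the paper: the key identity $\ux_\um M=\varphi(\Pi_\um)$ is established by exactly the same strengthened induction on the tree (tracking $\ux_1M=\varphi(\ua^{\psi_\um})$ and $\ux_2M=\varphi(\ub^{\psi_\um})$ alongside $\ux_\um M=\varphi((\ua\ub)^{\psi_\um})$), and the conclusion is drawn from the fixed-point criterion of Lemma \ref{cont:lemma:reduced}. The only (harmless) variation is that you verify $0<\alpha_\um<1$ by a sign change of $F_\um(1,T)$ and let the converse of that lemma deliver reducedness, whereas the paper reads reducedness directly off the explicit formulas \eqref{M:alpha_m} using $1\le k\le m\le 2k$.
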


\begin{proof}
The formulas \eqref{M:alpha_m} show that each $\alpha_\um$ is a
reduced quadratic real number because, in the notation of
\eqref{M:alpha_m}, Proposition \ref{M:propStree} gives $1\le k\le
m\le 2k$. Moreover, since $F_\um(1,\alpha_\um)=0$, we find that
$(\ux_\um M) \cdot (1/\alpha_\um) = 1/\alpha_\um$. Thus, in view of
Lemma \ref{cont:lemma:reduced}, it remains simply to prove that
$\ux_\um M = \varphi(\Pi_\um)$ for each $\um\in\Sigma$. This is a
simple computation if $\um$ is one of the degenerate triples
$(1,1,1)$ or $(2,1,1)$. For the remaining triples, we claim more
precisely that the node $(\ux,\ux_1,\ux_2)$ of
\eqref{M:propStree:tree} which occupies the same position as $\um$
in the Markoff tree \eqref{M:Mtree} satisfies
\begin{equation}
 \label{cont:lemma:Pi_m:eq}
 \ux M = \varphi\big( (\ua\ub)^{\psi_\um} \big),
 \quad
 \ux_1 M = \varphi\big( \ua^{\psi_\um} \big)
 \quad
 \ux_2 M = \varphi\big( \ub^{\psi_\um} \big).
\end{equation}
Again, this is a quick computation for the root $(5,1,2)$ of the
Markoff tree because, for that triple, we have $\psi_\um=I$ and we
find
\[
 \ux M = \matrice{12}{5}{7}{3} = \varphi(\ua\ub),
 \quad
 \ux_1 M = \matrice{2}{1}{1}{1} = \varphi(\ua),
 \quad
 \ux_2 M = \matrice{5}{2}{2}{1} = \varphi(\ub).
\]
Assume that \eqref{cont:lemma:Pi_m:eq} holds for some node $\um$ of
the Markoff tree. The left successor of $(\ux,\ux_1,\ux_2)$ in
\eqref{M:propStree:tree} is $(\ux_1M\ux,\ux_1,\ux)$ and we find
\[
 \begin{aligned}
 \ux_1M\ux M
  &= \varphi\big( \ua^{\psi_\um} \big)
     \varphi\big( (\ua\ub)^{\psi_\um} \big)
   = \varphi\big( (\ua\ua\ub)^{\psi_\um} \big)
   = \varphi\big( (\ua\ub)^{V\psi_\um} \big),\\
 \quad
 \ux_1 M
  &= \varphi\big( \ua^{\psi_\um} \big)
   = \varphi\big( \ua^{V\psi_\um} \big),\\
 \quad
 \ux M
  &= \varphi\big( (\ua\ub)^{\psi_\um} \big)
   = \varphi\big( \ub^{V\psi_\um} \big),
 \end{aligned}
\]
where $V\psi_\um$ is the left successor of $\psi_\um$ in
\eqref{cont:tree:endo}.  Similarly, we find that
\eqref{cont:lemma:Pi_m:eq} holds with $(\ux,\ux_1,\ux_2)$ replaced
by its right successor $(\ux M\ux_2, \ux, \ux_2)$ and $\psi_\um$
replaced by its right successor $U\psi_\um$.  This proves our claim
by induction on the level of $\um$ and therefore completes the proof
of the lemma.
\end{proof}

\begin{theorem}
 \label{cont:thm}
Let $\xi=[0,a_1,a_2,a_3,\dots]$ denote the continued fraction
expansion of an irrational real number $\xi$ with $0<\xi<1$.  Then
$\xi$ belongs to the set $\{\xi_\um\,;\, \um\in\Sigma^*\}$ if and
only if there exists a finite product $\psi$ of\/ $U$ and\/ $V$ such
that $(\ua\ub)^{(VU)^i\psi}$ is a prefix of $P:=a_1a_2a_3\cdots$ for
each $i\ge 0$.
\end{theorem}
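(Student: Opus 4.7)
The plan is to combine Lemma \ref{cont:lemma:Pi_m} --- which writes $\alpha_\um = [0,(\Pi_\um)^\infty]$ with $\Pi_\um = (\ua\ub)^{\psi_\um}$ --- with Theorem \ref{ext:thm_xi}, which realises $\xi_\um$ as the limit of $\alpha_{\um^{(i)}}$ along the maximal zigzag at $\um$. The crucial ingredient is the prefix identity
\[
 (\ua\ub)^{VU} = ((\ua\ub)^V)^U = (\ua\ua\ub)^U = (\ua\ub)(\ua\ub)(\ub),
\]
which makes $\ua\ub$ a prefix of its image under $VU$. Since every morphism of $\cW_0$ preserves the prefix relation, applying any morphism $\theta$ on both sides yields that $(\ua\ub)^\theta$ is a prefix of $(\ua\ub)^{VU\theta}$.

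For the forward implication, assume $\xi = \xi_\um$. Choose $i_0 \in \{1,2\}$ so that the move $\um^{(i_0)}\to\um^{(i_0+1)}$ is to the right successor and $\um^{(i_0+1)}\to\um^{(i_0+2)}$ is to the left successor (this is uniquely determined, since successor directions alternate in a zigzag). From the rules $\psi_{\text{left child}} = V\psi$ and $\psi_{\text{right child}} = U\psi$ that govern the tree \eqref{cont:tree:endo}, an immediate induction gives $\psi_{\um^{(i_0+2k)}} = (VU)^k\psi$ with $\psi := \psi_{\um^{(i_0)}}$, so $\Pi_{\um^{(i_0+2k)}} = (\ua\ub)^{(VU)^k\psi}$. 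Iterating the prefix identity above, these words form a strictly nested family of lengths tending to infinity; since $\alpha_{\um^{(i_0+2k)}} \to \xi_\um = \xi$ by Theorem \ref{ext:thm_xi}, each $(\ua\ub)^{(VU)^k\psi}$ is a prefix of $P$. Conversely, suppose such a $\psi$ exists. Since the tree \eqref{cont:tree:endo} identifies finite products of $U$ and $V$ with its nodes, there is a unique $\un \in \Sigma^*\setminus\{(2,1,1)\}$ with $\psi_\un = \psi$. Build the zigzag $\un^{(1)}=\un$, $\un^{(2)}=$ right successor of $\un$, $\un^{(3)}=$ left successor of $\un^{(2)}$, and so on alternately; then $\Pi_{\un^{(2k+1)}} = (\ua\ub)^{(VU)^k\psi}$, which by hypothesis is a prefix of $P$. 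Hence the continued fractions of $\xi$ and $\alpha_{\un^{(2k+1)}}$ share an initial segment of length tending to infinity, so $\alpha_{\un^{(2k+1)}}\to\xi$. Extend this zigzag backwards by prepending ancestors of $\un$ as long as the alternation pattern is preserved; this yields a maximal zigzag in \eqref{M:Mtreex} starting at some $\um\in\Sigma^*$. Theorem \ref{ext:thm_xi} then forces $\xi = \xi_\um$, since $(\alpha_{\un^{(2k+1)}})_k$ is a subsequence of $(\alpha_{\um^{(i)}})_i$.

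The main combinatorial point is that the prefix identity above holds for $VU$ but \emph{fails} for $UV$: indeed $(\ua\ub)^{UV} = \ua\ua\ub\ua\ub$ does not begin with $\ua\ub$. This asymmetry is precisely why the theorem singles out conjugation by $(VU)^i$ rather than $(UV)^i$, and why it is essential to identify the correct parity class of the zigzag. Picking the correct $i_0$ in the forward direction, and extending the right-left zigzag at $\un$ to a maximal one in the reverse direction, are the principal bookkeeping steps; both are straightforward given the alternation rule of zigzags.
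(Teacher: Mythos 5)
Your proof is correct and follows essentially the same route as the paper's: both directions rest on Lemma \ref{cont:lemma:Pi_m}, the convergence statement of Theorem \ref{ext:thm_xi}, the prefix relation $\ua\ub \prec (\ua\ub)^{VU}=\ua\ub\ua\ub\ub$ propagated by right composition, and the identification of zigzags (shifted to start with a right move) with the endomorphisms $(VU)^k\psi$. The choice of $i_0\in\{1,2\}$ and the passage to the maximal zigzag in the converse match the paper's choice of $r$ and its closing step exactly.
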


\begin{proof}
Suppose first that $\xi=\xi_\um$ for some $\um\in\Sigma^*$, and let
$(\um^{(i)})_{i\ge 1}$ denote the maximal zigzag in \eqref{M:Mtreex}
starting with $\um^{(1)}=\um$.  Define $\psi:=\psi_{\um^{(r)}}$
where $r=1$ if $m^{(2)}$ is the right successor of $\um$ and $r=2$
otherwise.  Then, for each $i\ge 0$, we have $\psi_{\um^{(2i+r)}} =
(VU)^i\psi$ and the above Lemma \ref{cont:lemma:Pi_m} gives
$\alpha_{\um^{(2i+r)}} = [0,(\Pi_i)^\infty]$ with $\Pi_i :=
(\ua\ub)^{(VU)^i\psi}$.  Since $\ua\ub$ is a prefix of
$(\ua\ub)^{VU} = \ua\ub\ua\ub\ub$, we note that $\Pi_i$ is a prefix
of $\Pi_{i+1}$ for each $i\ge 0$.  Combining this with the fact
that, by Theorem \ref{ext:thm_xi}, the sequence
$(\alpha_{\um^{(2i+r)}})_{i\ge 0}$ converges to $\xi_\um$, we deduce
that $\Pi_i$ must be a prefix of $P$ for each $i\ge 0$.

Conversely, suppose that there exists a finite product $\psi$ of $U$
and $V$ such that $\Pi_i := (\ua\ub)^{(VU)^i\psi}$ is a prefix of
$P$ for each $i\ge 0$.   For each $i\ge 1$, denote by $\um^{(2i-1)}$
and $\um^{(2i)}$ the nodes of the Markoff tree \eqref{M:Mtree} for
which $(VU)^{i-1}\psi = \psi_{\um^{(2i-1)}}$ and $U(VU)^{i-1}\psi =
\psi_{\um^{(2i)}}$.  Then, by Lemma \ref{cont:lemma:Pi_m}, we have
$\xi=\lim_{i\to\infty}\alpha_{\um^{(2i-1)}}$ and, by construction,
the sequence $(\um^{(i)})_{i\ge 1}$ is a zigzag in the tree
\eqref{M:Mtreex} with $\um^{(2)}$ as the right successor of
$\um^{(1)}$.  This zigzag is contained in maximal one starting with
some triple $\um\in\Sigma^*$.  As Theorem \ref{ext:thm_xi} shows
that $\xi_\um = \lim_{i\to \infty} \alpha_{\um^{(2i-1)}}$, we
conclude that $\xi=\xi_\um$.
\end{proof}

%%%%%%%%%%%%%%%%%%%%%%%%%%%%%%%%%%%%%%%%%%%%%%%%%%%%%%%%%%%%%%%%
%
%    Critical doubly infinite words
%
%%%%%%%%%%%%%%%%%%%%%%%%%%%%%%%%%%%%%%%%%%%%%%%%%%%%%%%%%%%%%%%%

\section{Critical doubly infinite words}
 \label{sec:crit}

For each doubly infinite word $A = \cdots a_{-2} a_{-1} a_0 a_1 a_2
\cdots$ on the set of positive integers, we define
\begin{equation}
 \label{crit:L(A)}
 L(A) = \sup_{i\in\bZ}\big( [0, a_i, a_{i+1}, \dots] +
 [a_{i-1}, a_{i-2}, \dots] \big) \in [0,\infty].
\end{equation}
The relevance of this quantity to our problem is provided by the
following key formula for the infimum of reduced real indefinite
quadratic forms on $\bZ^2\setminus\{(0,0)\}$ (see \cite[Appendix
1]{CF} or \cite[pp.~80--81]{Di}):

\begin{proposition}
 \label{crit:propL}
Let $\xi,\eta$ be irrational real numbers with $0<\xi<1$ and
$\eta<-1$. Write
\[
 \xi = [0, a_1, a_2, a_3, \dots]
 \et
 -\eta = [a_0, a_{-1}, a_{-2}, \dots].
\]
Then the quadratic form $G(U,T) = (T-\xi U)(T- \eta U) \in \bR[U,T]$
has
\[
 \frac{\mu(G)}{\sqrt{\disc(G)}}
 = L(\cdots a_{-2} a_{-1} a_0 a_1 a_2 \cdots)^{-1}.
\]
\end{proposition}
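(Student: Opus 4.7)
The plan is as follows. The discriminant of $G(U,T)=T^2-(\xi+\eta)TU+\xi\eta U^2$ equals $(\xi-\eta)^2$, so $\sqrt{\disc G}=\xi-\eta>0$. For $(q,p)\in\bZ^2$ with $q\ge 1$ and $p/q\in(\eta,\xi)$, both $q\xi-p$ and $p-q\eta$ are positive with sum $q(\xi-\eta)$, and a partial-fraction decomposition gives the key identity
\[
 \frac{\sqrt{\disc G}}{|G(q,p)|}
  \;=\;\frac{\xi-\eta}{(q\xi-p)(p-q\eta)}
  \;=\;\frac{1}{q(q\xi-p)}+\frac{1}{q(p-q\eta)}.
\]
Points with $q=0$ have $|G(q,p)|=p^2\ge 1$, while points with $p/q\notin(\eta,\xi)$ satisfy the one-sided bound $|G(q,p)|\ge (\xi-\eta)\min\{q|q\xi-p|,\,q|p-q\eta|\}$, which I will check contributes nothing below $\sqrt{\disc G}/L(A)$ once the two-sided analysis is in place. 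Write $\omega_i:=[0;a_i,a_{i+1},\dots]$ and $\omegahat_i:=[a_{i-1};a_{i-2},\dots]$ so that $L(A)=\sup_{i\in\bZ}(\omega_i+\omegahat_i)$.

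For the upper bound $\mu(G)/\sqrt{\disc G}\le 1/L(A)$ I would use the $\bR^*\times\GL_2(\bZ)$-invariance of $\mu(F)/\sqrt{\disc F}$ on real indefinite binary quadratic forms together with the observation that the left-shift of $A$ by one position is implemented by the matrix $\matrice{-a_1}{1}{1}{0}\in\GL_2(\bZ)$ acting simultaneously on $\xi$ and $\eta$ by linear fractional transformations. Iterating, the shift at position $i$ sends $(\xi,\eta)$ to $(\omega_i,-\omegahat_i)$ and transforms $G$ into $G^{(i)}(U,T):=(T-\omega_i U)(T+\omegahat_i U)$, for which $\sqrt{\disc G^{(i)}}=\omega_i+\omegahat_i$ and $G^{(i)}(0,1)=1$. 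The invariance yields
\[
 \frac{\mu(G)}{\sqrt{\disc G}}
   \;=\; \frac{\mu(G^{(i)})}{\sqrt{\disc G^{(i)}}}
   \;\le\; \frac{1}{\omega_i+\omegahat_i},
\]
and taking the infimum over $i\in\bZ$ produces the desired upper bound.

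For the reverse inequality, given a primitive $(q,p)\in\bZ^2$ with $q\ge 1$ and $p/q\in(\eta,\xi)$, I would complete $(p,q)$ to a matrix $g=\matrice{p}{b}{q}{d}\in\GL_2(\bZ)$ and consider the pair $(\xi^*,\eta^*):=g^{-1}\cdot(\xi,\eta)$. A direct M\"obius calculation using $\det g=\pm 1$ gives $|\xi^*-\eta^*|=\sqrt{\disc G}/|G(q,p)|$, so the target becomes $|\xi^*-\eta^*|\le L(A)$. By Serret's theorem the continued-fraction tails of $\xi^*$ and $\eta^*$ coincide, respectively, with those of $\xi$ and $-\eta$, so the bi-infinite word attached to $(\xi^*,\eta^*)$ eventually matches $A$ on both sides; the essential algebraic input is the convergent identity $q_n\xi-p_n=(-1)^n/(q_n\xi'_{n+1}+q_{n-1})$ together with the reversal formula $q_{n-1}/q_n=[0;a_n,\dots,a_1]$. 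The main obstacle is the combinatorial bookkeeping for the finitely many ``middle'' letters of this new word, which depend on the choice of $b,d$ and must be split into cases according to whether the continued fraction of $p/q$ first branches off from that of $\xi$ or of $-\eta$; in each case one verifies that the resulting central-position sum is dominated by some $\omega_j+\omegahat_j\le L(A)$. This careful case analysis is worked out in \cite[Appendix~1]{CF} and \cite[pp.~80--81]{Di}, which I would follow rather than reinvent.
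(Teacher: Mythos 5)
The paper offers no proof of this proposition: it is quoted as a classical fact with pointers to \cite[Appendix~1]{CF} and \cite[pp.~80--81]{Di}, so there is no in-house argument to compare yours against. Your outline is a sound reconstruction of the standard proof. The upper bound $\mu(G)/\sqrt{\disc(G)}\le 1/L(A)$ is essentially complete: the matrix $\matrice{-a_1}{1}{1}{0}$ does implement the shift simultaneously on $\xi$ and $\eta$ (and its inverse handles the positions $i\le 0$), the substituted form is a \emph{scalar multiple} of $(T-\omega_i U)(T+\omegahat_i U)$ --- worth stating explicitly, though the ratio $\mu/\sqrt{\disc}$ is insensitive to the scalar --- and evaluating at $(0,1)$ gives $1/(\omega_i+\omegahat_i)$, whence the bound after taking the infimum over $i\in\bZ$. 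Your identity $|\xi^*-\eta^*|=\sqrt{\disc(G)}/|G(q,p)|$ for $(\xi^*,\eta^*)=g^{-1}\cdot(\xi,\eta)$ with $\det g=\pm1$ also checks out. What remains for the reverse inequality --- verifying that the doubly infinite word attached to $(\xi^*,\eta^*)$ has its central sum dominated by some $\omega_j+\omegahat_j$ of $A$, which requires the bookkeeping of the finitely many middle letters --- is precisely the case analysis you defer to \cite{CF} and \cite{Di}. Since the paper relies on those same sources for the entire statement, deferring that step leaves you on the same footing as the paper; there is no gap beyond what the paper itself leaves to the literature.
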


Our goal in this ultimate section is to show that any extremal
number $\xi$ with Lagrange constant $\nu(\xi) = 1/3$ is equivalent
to $\xi_\um$ for some $\um\in\Sigma^*$.  In view of Proposition
\ref{cont:prop:balanced}, we may restrict to balanced extremal
numbers.  Then, by combining the above proposition with Theorem
\ref{red:thm}, we obtain the following statement.

\begin{corollary}
 \label{crit:propL:cor}
Let $\xi$ be a balanced extremal number with $\nu(\xi)=1/3$.  Denote
by $\xi'$ and $\xi''$ its conjugates and form the continued fraction
expansions
\[
 \xi=[0,a_1,a_2,a_3,\dots], \quad
 -\xi' = [a'_0, a'_{-1},a'_{-2},\dots] \et
 -\xi'' = [a''_0, a''_{-1},a''_{-2},\dots].
\]
Then, the semi-infinite words $P := a_1a_2a_3\cdots$, $Q' := \cdots
a'_{-2}a'_{-1}a'_0$ and $Q'' := \cdots a''_{-2}a''_{-1}a''_0$
satisfy $L(Q'P)=L(Q''P)=3$.  Moreover, $P$ is not ultimately
periodic and we have $a'_0\neq a''_0$.
\end{corollary}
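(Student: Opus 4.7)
The plan is to read off each of the three conclusions directly from results already in hand, without introducing new constructions. Since $\xi$ is balanced, it is in particular reduced, so $0<\xi<1$ and $\xi',\xi''<-1$; this will let us bring Proposition \ref{crit:propL} to bear on the forms associated to $\xi$. The continued fraction expansions are then well-defined with all partial quotients $a_i,a'_i,a''_i\ge 1$.

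For the inequality $a'_0\neq a''_0$, I would note that by definition, a balanced extremal number has conjugates of distinct integral parts. Since $-\xi',-\xi''>1$, we have $a'_0=\lfloor -\xi'\rfloor$ and $a''_0=\lfloor -\xi''\rfloor$, so the balanced condition gives $a'_0\neq a''_0$ immediately. For the claim that $P$ is not ultimately periodic, I would invoke the fact that every extremal number is transcendental (recalled in the abstract of the paper), hence not a quadratic irrational; by Lagrange's theorem on continued fractions, its expansion cannot be eventually periodic, so $P=a_1a_2a_3\cdots$ is not ultimately periodic.

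The main computation is the identity $L(Q'P)=L(Q''P)=3$, which I would obtain as follows. By Theorem \ref{red:thm},
\[
 \frac{\mu(G')}{\sqrt{\disc(G')}} \;=\; \frac{\mu(G'')}{\sqrt{\disc(G'')}} \;=\; \nu(\xi) \;=\; \frac{1}{3}.
\]
Now the ratio $\mu(F)/\sqrt{\disc(F)}$ is invariant under multiplication of a real indefinite binary quadratic form $F$ by any nonzero scalar (both $\mu$ and $\sqrt{\disc}$ scale by $|\lambda|$). From the formula \eqref{conj:lemmaG:G'G''}, $G'$ is a nonzero scalar multiple of $(T-\xi U)(T-\xi'U)$, and $G''$ is a nonzero scalar multiple of $(T-\xi U)(T-\xi''U)$; note here that $c+d\xi$ and $b+d\xi$ are nonzero because $\xi$ is irrational and $(c,d),(b,d)$ are the relevant nonzero rows/columns of the non-degenerate matrix $M$. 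Hence
\[
 \frac{\mu\bigl((T-\xi U)(T-\xi'U)\bigr)}{\sqrt{\disc\bigl((T-\xi U)(T-\xi'U)\bigr)}} \;=\; \frac{1}{3},
\]
and similarly with $\xi''$ in place of $\xi'$. Applying Proposition \ref{crit:propL} with $\eta=\xi'$ (resp.\ $\eta=\xi''$), using that $0<\xi<1$, $\xi'<-1$, $\xi''<-1$, and that the expansions of $-\xi'$ and $-\xi''$ are precisely $[a'_0,a'_{-1},\dots]$ and $[a''_0,a''_{-1},\dots]$, gives $L(Q'P)^{-1}=L(Q''P)^{-1}=1/3$, i.e.\ $L(Q'P)=L(Q''P)=3$.

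The proof contains no real obstacle; the only point that warrants care is verifying that the hypotheses of Proposition \ref{crit:propL} apply to the two forms $(T-\xi U)(T-\xi'U)$ and $(T-\xi U)(T-\xi''U)$, which is exactly the content of "balanced" (via the intermediate notion "reduced"), and that the scalar factors $c+d\xi$, $b+d\xi$ are nonzero so that the passage from $G',G''$ to these normalized forms is legitimate.
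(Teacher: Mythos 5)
Your proof is correct and follows essentially the same route as the paper: Theorem \ref{red:thm} gives $\mu(G')/\sqrt{\disc(G')}=\mu(G'')/\sqrt{\disc(G'')}=\nu(\xi)=1/3$, Proposition \ref{crit:propL} converts these ratios into $L(Q'P)^{-1}$ and $L(Q''P)^{-1}$, and the last two claims are immediate from $\xi$ being non-quadratic and balanced. Your extra remarks on the scale-invariance of $\mu(F)/\sqrt{\disc(F)}$ and the non-vanishing of $c+d\xi$ and $b+d\xi$ merely make explicit details the paper leaves implicit.
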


\begin{proof}
Let $G'$ and $G''$ denote the real quadratic forms associated to
$\xi$ (see Definition \ref{conj:def}). According to Proposition
\ref{crit:propL}, we have
\[
 \frac{\mu(G')}{\sqrt{\disc(G')}} = L(Q'P)^{-1}
 \et
 \frac{\mu(G'')}{\sqrt{\disc(G'')}} = L(Q''P)^{-1}.
\]
Then Theorem \ref{red:thm} gives $L(Q'P) = L(Q''P) = \nu(\xi)^{-1} =
3$. Finally, $P$ is not ultimately periodic because $\xi$ is not a
quadratic number, and we have $a'_0\neq a''_0$ because $\xi$ is
balanced.
\end{proof}

In their presentation of Markoff's theory, both L.~E.~Dickson
\cite{Di} and E.~Bombieri \cite{Bo} provide a combinatorial analysis
of the doubly infinite words $A$ with $L(A)\le 3$.  Those with
$L(A)<3$ are well understood.  They are exactly the purely periodic
words with period $\ua$, $\ub$ or $(\ua\ub)^{\psi_\um}$ for some
$\um$ in the Markoff tree \eqref{M:Mtree} \cite[Thm.~15]{Bo}, and so
they form a countable set.  By contrast the doubly infinite words
$A$ with $L(A)=3$ make an uncountable set.  Among these, some are
\emph{ultimately periodic} in the sense that they admit a periodic
right semi-infinite suffix such as the word $1^\infty\, 2\, 2\,
1^\infty = \cdots\, 1\, 1\, 2\, 2\, 1\, 1\, \cdots$ (see
\cite[Thm.~63]{Di}). Putting these aside, we state:

\begin{definition}
A doubly infinite word $A$ is \emph{critical} if it has $L(A)=3$ and
is not ultimately periodic.
\end{definition}

In the context of Corollary \ref{crit:propL:cor}, we are facing two
critical words $Q'P$ and $Q''P$ with common suffix $P$.  Our next
goal is to provide a combinatorial analysis of this situation.
Collecting results from the presentation of Bombieri in \cite{Bo},
we first make the following observation.

\begin{lemma}
 \label{crit:lemmaC}
Let $A$ be a critical word.  There exist an integer $e\ge 1$ and a
non-constant sequence $(e_i)_{i\ge\bZ}$ consisting of integers from
the set $\{e,e+1\}$ such that $A$ factors as
\begin{equation}
\label{crit:lemmaC:eq}
 \cdots\ua\ub^{e_{-1}}\ua\ub^{e_0}\ua\ub^{e_1}\cdots\ \emph{(type I)}
 \quad\text{or}\quad
 \cdots\ub\ua^{e_{-1}}\ub\ua^{e_0}\ub\ua^{e_1}\cdots\ \emph{(type II).}
\end{equation}
Moreover, if $A$ is of type I (resp.\ type II), there exists a
unique doubly infinite product $B$ of the words $\ua$ and $\ub$ such
that $A = B^{U^e}$ (resp.\ $A = B^{V^e}$), and $B$ is critical of
type II (resp.\ type I) .
\end{lemma}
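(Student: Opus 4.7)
The plan is to carry out the combinatorial analysis of critical words in the spirit of Dickson \cite{Di} and Bombieri \cite{Bo}, and then to reinterpret the resulting block decomposition in terms of the substitutions $U$ and $V$ defined by \eqref{cont:endo_UV}.

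First I would establish the alphabet and block structure. Since $L(A)\le 3$, every letter of $A$ lies in $\{1,2\}$, for otherwise a partial quotient $\ge 3$ would already force $L(A)>3$ at its own position. A short continued-fraction comparison then shows that no letter occurs in isolation and no letter occurs three times in a row, so $A$ factors uniquely as a doubly infinite product of the two blocks $\ua=11$ and $\ub=22$. Because $A$ is not ultimately periodic it must contain both blocks infinitely often in both directions, and the no-isolation condition forces exactly one of the two structures in \eqref{crit:lemmaC:eq}, with positive exponents $e_i\ge 1$.

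The main step, and the principal obstacle, is to show that the $e_i$ take at most two consecutive values. Assume type I and set $e=\min_i e_i\ge 1$. I would argue by contradiction: supposing some $e_j\ge e+2$, pick $k$ with $e_k=e$ and compare the values of $[0,a_{i+1},a_{i+2},\dots]+[a_i,a_{i-1},\dots]$ evaluated at a position $i$ inside the block $\ub^{e_j}$ with the value at the analogous position inside $\ub^{e_k}$. Standard monotonicity of continued fractions with respect to partial quotients, combined with the explicit bounds available when the relevant tails are dominated by long runs of $\ub$'s, yields a strict inequality forcing one of these two contributions to exceed $3$, contradicting $L(A)=3$. The delicate point is to make the bookkeeping sharp enough to detect a unit change in the exponent; this reduces essentially to the neighbourly Markoff inequalities associated to adjacent triples in the tree \eqref{M:Mtree}.

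With $e_i\in\{e,e+1\}$ established, set $f_i:=e_i-e\in\{0,1\}$ and define the doubly infinite word $B:=\cdots\ua\ub^{f_{-1}}\ua\ub^{f_0}\ua\ub^{f_1}\cdots$ in the letters $\ua$ and $\ub$. The formulas $\ua^{U^e}=\ua\ub^e$ and $\ub^{U^e}=\ub$ give immediately $A=B^{U^e}$. Uniqueness of $B$ follows because the non-constancy of $(e_i)$ pins down $e=\min_i e_i$, after which $B$ is read off from $A$. The same non-constancy yields both $f_i=0$ and $f_j=1$, so $B$ contains both patterns $\ua\ua$ and $\ua\ub\ua$, hence factors as $\cdots\ub\ua^{g_{-1}}\ub\ua^{g_0}\ub\ua^{g_1}\cdots$ with each $g_i\ge 1$, placing $B$ in type II.

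It remains to verify that $B$ is critical. Non-ultimate-periodicity transfers from $A$ to $B$ by injectivity of $U^e$ on right semi-infinite tails, and conversely, so $B$ is not ultimately periodic. For $L(B)=3$ I would invoke the invariance of $L$ under the substitutions $U$ and $V$ acting on doubly infinite words of $\cW_0$, a standard feature of the Markoff--Cohn theory established by direct continued-fraction computation in \cite{Bo}; alternatively one can translate the statement into the $\GL_2(\bZ)$-action on the associated indefinite quadratic forms and appeal to the material of Section~\ref{sec:reduction}. The type II case of the lemma is proved symmetrically with $V$ in place of $U$.
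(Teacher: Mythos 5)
The overall architecture is right --- block decomposition, renormalization by $U^e$ or $V^e$, transfer of criticality via the substitution-invariance of $L$ --- but the step you yourself flag as ``the principal obstacle'' is exactly where the proposal has a genuine hole, and it is a hole you do not need to fill by the route you suggest. You propose to prove $e_i\in\{e,e+1\}$ directly, by comparing continued-fraction values at positions inside a block $\ub^{e_j}$ with $e_j\ge e+2$ against positions inside a block $\ub^{e_k}$ with $e_k=e$, and you concede that the bookkeeping is delicate. It is: a long run of $2$'s contributes locally only about $2\sqrt{2}<3$, so no single-position estimate detects a jump of $2$ in the exponents; one has to compare tails on both sides across different occurrences of the blocks, which amounts to redoing the hard part of Markoff's original analysis. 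As written, the central claim is asserted, not proved.

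The paper turns this step into a triviality by reversing your order of deductions. It defines $B$ immediately from $e=\min_i e_i$ --- this requires no upper bound on the $e_i$: one simply sets $B=\cdots\ua\ub^{e_{-1}-e}\ua\ub^{e_0-e}\ua\ub^{e_1-e}\cdots$, so that $A=B^{U^e}$ --- then invokes Lemma 14 of \cite{Bo} to get $L(B)=L(A)=3$, and notes that $B$ is not ultimately periodic, so $B$ is critical. Applying the type I/type II dichotomy (Lemma 11 of \cite{Bo}) to $B$ itself: since some $e_i=e$, the word $B$ contains $\ua\ua$, hence is of type II, hence contains no $\ub\ub$, which is precisely the statement $e_j-e\le 1$ for every $j$. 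So the two-consecutive-values property is a \emph{consequence} of the renormalization, not a prerequisite for it; you already have every ingredient (criticality of $B$, the dichotomy), just applied in the wrong order. A smaller point: in your first paragraph, ``no letter occurs three times in a row'' is false for these words (a type I word with some $e_i\ge 2$ contains $2222$); what is actually needed is that all maximal runs have even length, and in any case this preliminary classification is exactly the content of Lemma 11 of \cite{Bo}, which the paper cites wholesale rather than re-deriving.
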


\begin{proof}
Since $A$ is not ultimately periodic, Lemma 11 of \cite{Bo} shows
that it can be written in one of the forms \eqref{crit:lemmaC:eq}
for some non-constant sequence of positive integers
$(e_i)_{i\ge\bZ}$.  Suppose that $A$ is of type I, and put $e =
\min_{i\in\bZ} e_i$.  Then, we have $A=B^{U^e}$ with $B =
\cdots\ua\ub^{e_{-1}-e}\ua\ub^{e_0-e}\ua\ub^{e_1-e}\cdots$. Like
$A$, this word $B$ is not ultimately periodic and Lemma 14 of
\cite{Bo} gives $L(A) = L(B)=3$, thus $B$ is a critical word.  Upon
choosing an index $i$ such that $e_i=e$, we find that $B$ contains
the subword $\ua\ub^{e_i-e}\ua = \ua\ua$, thus $B$ is of type II.
From this it follows that each difference $e_j-e$ is equal to $0$ or
$1$, thus $e_j\in\{e,e+1\}$.  The case where $A$ is of type II is
similar.
\end{proof}

The second preliminary result given below is connected to the fact
that, for each $\um$ in the Markoff tree \eqref{M:Mtree}, the
matrices $\ux_\um$ of Section \ref{sec:M} are symmetric and satisfy
$\ux_\um M = \varphi((\ua\ub)^{\psi_\um})$ (see the proof of Lemma
\ref{cont:lemma:Pi_m}).

\begin{lemma}
 \label{crit:lemmaP}
For any finite product $\psi$ of $U$ and $V$, the word $(\ua
\ub)^\psi$ admits a factorization of the form $\ua \up \ub$ where
$\up = \up^*$ is a palindrome in $\cW_0$.
\end{lemma}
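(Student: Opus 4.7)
I would prove this by induction on the length of $\psi$ as a product of the generators $U$ and $V$. As a preliminary, since $\ua = 11$ and $\ub = 22$ are themselves palindromes of $\cW$, a word in $\cW_0$ is a palindrome in $\cW$ if and only if its decomposition as a product of $\ua$'s and $\ub$'s is a palindrome when read as a sequence over the two-letter alphabet $\{\ua,\ub\}$; I will work with this combinatorial reformulation throughout. For the base case $\psi = I$ the factorization $(\ua\ub)^I = \ua\cdot\emptyset\cdot\ub$ works with the empty middle word. For the inductive step, write $\psi = \phi\sigma$ with $\sigma \in \{U,V\}$ and assume $(\ua\ub)^\phi = \ua\,\up\,\ub$ with $\up$ a palindrome. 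Applying $\sigma$ letter-by-letter and using the defining relations \eqref{cont:endo_UV},
\[
 (\ua\ub)^\psi
 \;=\; \ua^\sigma\,\up^\sigma\,\ub^\sigma
 \;=\;
 \begin{cases}
  \ua\,(\ub\,\up^U)\,\ub & \text{if }\sigma = U,\\[2pt]
  \ua\,(\up^V\,\ua)\,\ub & \text{if }\sigma = V.
 \end{cases}
\]
The remaining task is thus to show that $\ub\,\up^U$ and $\up^V\,\ua$ are palindromes whenever $\up$ is.

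This palindromicity step is the main obstacle, and I would treat both cases by the same trick, which I describe for $\sigma = U$. Introduce the \emph{reversal-twisted} endomorphism $\tilde U$ of $\cW_0$ defined by $\ua^{\tilde U} = \ub\,\ua$ and $\ub^{\tilde U} = \ub$, chosen precisely so that $(w^U)^* = w^{\tilde U}$ for each letter $w \in \{\ua,\ub\}$. Reversing and reordering factors, this single-letter identity upgrades to
\[
 (\uw^U)^* = (\uw^*)^{\tilde U}
 \qquad\text{for every } \uw \in \cW_0;
\]
in particular, when $\up = \up^*$ is a palindrome, $(\up^U)^* = \up^{\tilde U}$.

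The second ingredient is the commutation $\ub\cdot w^U = w^{\tilde U}\cdot\ub$ on single letters $w$, a direct one-letter check from the definitions of $U$ and $\tilde U$. Telescoping this commutation letter-by-letter through $\up$ gives the identity $\ub\,\up^U = \up^{\tilde U}\,\ub$ for every $\up \in \cW_0$. Combining the two, when $\up$ is a palindrome I obtain
\[
 \ub\,\up^U \;=\; \up^{\tilde U}\,\ub \;=\; (\up^U)^*\,\ub \;=\; (\ub\,\up^U)^*,
\]
so $\ub\,\up^U$ is a palindrome, handling the case $\sigma = U$. The case $\sigma = V$ is treated symmetrically: define $\tilde V$ by $\ua^{\tilde V} = \ua$ and $\ub^{\tilde V} = \ub\,\ua$, so that $(w^V)^* = w^{\tilde V}$ on letters; verify the single-letter identity $w^V\,\ua = \ua\,w^{\tilde V}$ and telescope it to $\up^V\,\ua = \ua\,\up^{\tilde V}$; then palindromicity of $\up^V\,\ua$ when $\up = \up^*$ follows exactly as above. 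This completes the induction and proves the lemma.
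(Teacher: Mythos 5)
Your proof is correct and follows the same induction as the paper's: peel off the last generator $\sigma\in\{U,V\}$, observe that $(\ua\up\ub)^U=\ua(\ub\up^U)\ub$ and $(\ua\up\ub)^V=\ua(\up^V\ua)\ub$, and reduce to showing that $\ub\up^U$ and $\up^V\ua$ are palindromes whenever $\up$ is. The only difference is that the paper settles this last step by citing the formulas (7) of Bombieri's article, whereas you verify it directly via the reversal-twisted endomorphisms $\tilde U$, $\tilde V$ (using that $\ua=11$ and $\ub=22$ are themselves palindromes); your verification is correct and makes the argument self-contained.
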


The combinatorial argument given below is extracted from the proof
of Theorem 15 of \cite{Bo}.

\begin{proof}
We proceed by induction on the length of $\psi$ as a product of $U$
and $V$.  If this length is $0$, we have $(\ua\ub)^\psi = \ua\up\ub$
where $\up=\emptyset$ is the empty word.  Otherwise, $\psi$ takes
one of the forms $\psi'U$ or $\psi'V$ for some product $\psi'$ of
$U$ and $V$ of smaller length.  By hypothesis, we have
$(\ua\ub)^{\psi'} = \ua\up'\ub$ for some palindrome $\up'\in\cW_0$.
Then $(\ua\ub)^\psi$ is either equal to $(\ua\up'\ub)^U$ or
$(\ua\up' \ub)^V$ and so it takes the form $\ua\up\ub$ where $\up$
is either $\ub(\up')^U$ or $(\up')^V\ua$.  As $\up'$ is a
palindrome, the formulas (7) of \cite{Bo} show that, in both cases
$\up$ is a palindrome.
\end{proof}

\begin{theorem}
 \label{crit:thm}
Let $P = a_1a_2a_3\dots$ be a right semi-infinite word which is not
ultimately periodic.  The following conditions are equivalent:
\begin{itemize}
 \item[1)]
 There exist left semi-infinite words $Q'= \dots
 a'_{-2}a'_{-1}a'_{0}$ and $Q''= \dots  a''_{-2}a''_{-1}a''_{0}$
 with $a'_0\neq a''_0$ such that $L(Q'P)=L(Q''P)=3$.
 \item[2)]
 There exists a sequence of positive integers $(n_i)_{i\ge 1}$ such
 that, upon defining recursively
 \begin{equation}
 \label{crit:thm:def_psi}
 \psi_1=U^{n_1-1},\quad
 \psi_i =
 \begin{cases}
  V^{n_i}\psi_{i-1} &\text{if $i\ge 2$ is even,}\\
  U^{n_i}\psi_{i-1} &\text{if $i\ge 3$ is odd,}
  \end{cases}
 \end{equation}
 the word $\ua^{\psi_i}$ is a prefix of $\ua P$ for each $i\ge 1$.
\end{itemize}
Moreover, when the condition 1) is fulfilled, one of the words $Q'$
or $Q''$ is $P^*\ua\ub$ and the other is $P^*\ub\ua$, where $P^*$
denotes the reciprocal of $P$.
\end{theorem}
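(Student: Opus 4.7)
The plan is to prove both directions by iterated application of Lemma \ref{crit:lemmaC} and Lemma \ref{crit:lemmaP}, tracking how the substitutions $U$ and $V$ act on doubly infinite words.

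For the direction $(1)\Rightarrow(2)$ together with the identification of $Q', Q''$, I would set $A' = Q'P$ and $A'' = Q''P$; both are critical by hypothesis. Apply Lemma \ref{crit:lemmaC} to each. Since they share the right semi-infinite suffix $P$, their decompositions must be of the same type and have the same minimal block exponent $e$; after possibly interchanging $\ua$ and $\ub$, I may assume both are of type I, so $A' = (B')^{U^{e}}$ and $A'' = (B'')^{U^{e}}$ for unique critical words $B', B''$ of type II. Since all letters lie in $\{1, 2\}$, the hypothesis $a'_0 \ne a''_0$ gives $\{a'_0, a''_0\} = \{1, 2\}$; a short case analysis on the block alignment at position $0$ shows that $B'$ and $B''$ still share a common right semi-infinite suffix, still differ at a single position on the left, and still take distinct values there. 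I set $n_1 = e + 1$, so that $U^{n_1 - 1} = U^{e}$ matches $\psi_1$ in condition (2), and iterate the same reasoning on $B', B''$: the next application produces an exponent $n_2$ with substitution $V^{n_2}$, giving $\psi_2 = V^{n_2}\psi_1$, and so on. The alternation between types I and II at successive levels is precisely the alternation between $U$ and $V$ in the definition of $\psi_i$.

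The prefix condition that $\ua^{\psi_i}$ be a prefix of $\ua P$ then follows by tracking the right-hand suffix through the iteration: the first block of the critical word at level $i$, lifted back through the composed substitution $\psi_i$, matches the start of $\ua P$. In parallel, the leftward alignment at position $0$ propagates through the palindromic factorization $(\ua\ub)^{\psi_i} = \ua \up_i \ub$ of Lemma \ref{crit:lemmaP}: for each $i$, the portion of $Q'$ immediately to the left of position $0$ coincides with the reversal of the corresponding prefix of $P$, with an extra terminal $\ua\ub$ for $Q'$ and $\ub\ua$ for $Q''$ (or vice versa). Passing to the limit as $i \to \infty$ yields $\{Q', Q''\} = \{P^* \ua\ub, P^* \ub\ua\}$.

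For the converse $(2)\Rightarrow(1)$, I would define $Q' := P^* \ua\ub$ and $Q'' := P^* \ub\ua$, so that $a'_0 = 2 \ne 1 = a''_0$ automatically. Using Lemma \ref{crit:lemmaP} to write $(\ua\ub)^{\psi_i} = \ua \up_i \ub$ with $\up_i$ a palindrome, the prefix hypothesis on $\ua^{\psi_i}$ combined with this palindromicity guarantees that $Q'P$ and $Q''P$ contain arbitrarily long palindromic windows centered near position $0$. The invariance of $L$ under the morphisms $U$ and $V$ (Lemma 14 of \cite{Bo}, invoked already in the proof of Lemma \ref{crit:lemmaC}) then reduces the computation of $L(Q'P)$ and $L(Q''P)$ to a limiting case in which the value $3$ is explicit. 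Non-ultimate-periodicity of $Q'P$ and $Q''P$ is immediate from the hypothesis on $P$.

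The main obstacle is the combinatorial bookkeeping in the forward direction: one must verify at each step of the iteration that the property ``shared right suffix and differing position-$0$ letter with values in $\{1,2\}$'' is inherited by the de-substituted words $B', B''$, that the composed substitutions produced by the iteration match exactly the prescribed form of $\psi_i$, and that the palindromic structure of Lemma \ref{crit:lemmaP} propagates the identification $\{Q', Q''\} = \{P^* \ua\ub, P^* \ub\ua\}$ all the way to the limit. One also needs the iteration never to terminate, which follows from the hypothesis that $P$ is not ultimately periodic: termination would force $P$ itself to be ultimately periodic.
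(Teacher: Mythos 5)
Your treatment of 1) $\Rightarrow$ 2), together with the identification of $Q'$ and $Q''$, follows essentially the paper's route: iterate Lemma \ref{crit:lemmaC} on the pair of critical words $A'=Q'P$ and $A''=Q''P$ sharing the suffix $P$, read off the exponents $n_i$ from the alternation of types, and propagate the suffix information ($\ub\ua$ versus $\ua\ub$) to the limit via the palindromic factorization of Lemma \ref{crit:lemmaP}. Two remarks. First, you cannot ``interchange $\ua$ and $\ub$'' to reduce to type I: $\ua=1\,1$ and $\ub=2\,2$ play asymmetric roles both in the value of $L$ and in the statement of condition 2); the type II case is instead absorbed by allowing $n_1=1$, i.e.\ $\psi_1=I$, which is precisely why the recursion begins with $U^{n_1-1}$ rather than $U^{n_1}$. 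Second, the passage to the limit for $\{Q',Q''\}=\{P^*\ua\ub,\,P^*\ub\ua\}$ rests on a concrete palindrome identity, $\up_{2i+1}\ub\ua\up_{2i}=\up_{2i}\ua\ub\up_{2i+1}$, which is what converts the suffix data on $Q'_i,Q''_i$ into common suffixes of $Q'$ with $P^*\ub\ua$ and of $Q''$ with $P^*\ua\ub$; you gesture at this, but it is exactly the bookkeeping you defer, so be aware it is the crux rather than a routine verification.

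The genuine gap is in 2) $\Rightarrow$ 1). You propose to get $L(P^*\ub\ua P)=3$ from ``invariance of $L$ under the morphisms $U$ and $V$'' applied to a ``limiting case in which the value $3$ is explicit.'' But the morphisms do not commute with reversal (e.g.\ $(\ua^U)^*=\ub\ua\neq\ua\ub=(\ua^*)^U$), so $P^*\ub\ua P$ is not the image under $\psi_i$ of a simpler doubly infinite word, and Lemma 14 of \cite{Bo} does not apply to it directly. What works is a two-sided squeeze: (a) since the palindrome $\up_{2i+1}$ is a prefix of $P$ of length tending to infinity, every finite factor of $P^*\ub\ua P$ lies in $\up_{2i+1}\ub\ua\up_{2i+1}$ for some $i$, hence in the purely periodic word of period $\ua^{\psi_{2i+1}}=\ua\up_{2i+1}\ub=(\ua\ub)^{U^{n_{2i+1}-1}\psi_{2i}}$, whose $L$-value is strictly less than $3$ by Theorem 15 of \cite{Bo}; by continuity of $L$ with respect to finite factors this gives $L(P^*\ub\ua P)\le 3$; (b) since $P$ is not ultimately periodic, $L(P^*\ub\ua P)<3$ is excluded by the same Theorem 15, forcing equality. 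Your sketch contains neither the upper-bound mechanism (embedding into periodic Markoff words of $L$-value below $3$) nor the use of non-periodicity to secure the lower bound; ``the value $3$ is explicit in the limit'' conflates these two halves. Finally, reducing $L(P^*\ua\ub P)$ to $L(P^*\ub\ua P)$ requires the invariance of $L$ under reversal (Lemma 5 of \cite{Bo}), which should be cited explicitly.
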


In the sequel, we only use the implication 1) $\Rightarrow$ 2).
However the reverse implication shows in particular that there are
uncountably many right semi-infinite words $P$ satisfying 1).

\begin{proof}
Suppose first that the condition 1) is fulfilled.  Then, the words
$A':=Q'P$ and $A'':=Q''P$ are both critical and, as they admit $P$
for suffix, they are products of $\ua$ and $\ub$ of the same type
(see Lemma \ref{crit:lemmaC}).  By permuting the words $Q'$ and
$Q''$ if necessary, we may assume without loss of generality that
$Q'$ ends with $1$ and that $Q''$ ends with $2$.

Suppose first that $A'$ and $A''$ are of type I.  Then there exist
sequences of positive integers $(e'_i)_{i\in\bZ}$ and
$(e''_i)_{i\in\bZ}$ such that
\[
 A' = \cdots\ua\ub^{e'_{-1}}\ua\ub^{e'_0}\ua\ub^{e'_1}\cdots
 \et
 A'' = \cdots\ua\ub^{e''_{-1}}\ua\ub^{e''_0}\ua\ub^{e''_1}\cdots.
\]
Since $A'$ and $A''$ admit $P$ as a common suffix, these two
sequences coincide from some point on.  By shifting the indexation,
we may assume that $e'_0\neq e''_0$ and that $e'_i = e''_i$ for each
$i\ge 1$.  As $P$ is not ultimately periodic, the integers
$e_i:=e'_i=e''_i$ with $i\ge 1$ are not all equal to each other.
Then, according to Lemma \ref{crit:lemmaC}, the sequences
$(e'_i)_{i\in\bZ}$, $(e''_i)_{i\in\bZ}$ and $(e_i)_{i\ge 1}$ take
values in the same set $\{e,e+1\}$ for some integer $e\ge 1$.  As
the suffix $P$ is preceded by $1$ in $A'$ and by $2$ in $A''$, we
deduce that $e'_0=e$ and $e''_0=e+1$, so that
\begin{equation}
 \label{crit:thm:eq1}
 Q'= \cdots \ua\ub^{e'_{-2}}\ua\ub^{e'_{-1}}\ua, \quad
 Q''= \cdots \ua\ub^{e''_{-2}}\ua\ub^{e''_{-1}}\ua\ub, \quad
 P =  \ub^e\ua\ub^{e_1}\ua\ub^{e_2}\cdots,
\end{equation}
and therefore
\[
 A' = (Q_1'P_1)^{U^e} \et A'' = (Q_1''P_1)^{U^e}
\]
for some left semi-infinite words $Q'_1$ with suffix $\ua$ and
$Q''_1$ with suffix $\ua\ub$, and some right semi-infinite word
$P_1$ such that
\begin{equation}
 \label{crit:thm:eq2}
 \ua P = (\ua P_1)^{U^e}.
\end{equation}
By Lemma \ref{crit:lemmaC}, the words $A_1':=Q'_1P_1$ and
$A_1'':=Q''_1P_1$ are both critical of type II.

As the suffix $P_1$ is preceded by $1$ in $A'_1$ and by $2$ in
$A''_1$, the same argument based on Lemma \ref{crit:lemmaC} shows
that there exist an integer $f\ge 1$ and sequences $(f'_i)_{i<0}$
$(f''_i)_{i<0}$ and $(f_i)_{i>0}$ taking values in $\{f,f+1\}$ such
that
\begin{equation}
 \label{crit:thm:eq3}
 Q'_1= \cdots \ub\ua^{f'_{-2}}\ub\ua^{f'_{-1}}\ub\ua, \quad
 Q''_1= \cdots \ub\ua^{f''_{-2}}\ub\ua^{f''_{-1}}\ub, \quad
 P_1 =  \ua^f\ub\ua^{f_1}\ub\ua^{f_2}\cdots.
\end{equation}
From this, we deduce that
\[
 A'_1 = (Q'_2P_2)^{V^f} \et A''_1 = (Q''_2P_2)^{V^f}
\]
for some left semi-infinite words $Q'_2$ with suffix $\ub\ua$ and
$Q''_2$ with suffix $\ub$, and some right semi-infinite word $P_2$
such that
\begin{equation}
 \label{crit:thm:eq4}
 \ua P_1 = \ua P_2^{V^f} = (\ua P_2)^{V^f}.
\end{equation}
Then, by Lemma \ref{crit:lemmaC}, the words $A_2':=Q'_2P_2$ and
$A_2'':=Q''_2P_2$ are both critical of type I.

Combining \eqref{crit:thm:eq2} and \eqref{crit:thm:eq4}, we obtain
\[
 \ua P = (\ua P_1)^{U^e} = (\ua P_2)^{V^fU^e}.
\]
Moreover, \eqref{crit:thm:eq1} and \eqref{crit:thm:eq3} show that
$\ub\ua$ is a suffix of $Q'$ and $Q'_1$ while $\ua\ub$ is a suffix
of $Q''$ and $Q''_1$.  Therefore, by iterating the above
construction indefinitely, we obtain a sequence of positive integers
$(n_i)_{i\ge 1}$ starting with $n_1=e+1$ and $n_2=f$, two sequences
of left semi-infinite words $(Q'_i)_{i\ge 1}$ and $(Q''_i)_{i\ge
1}$, and a sequence of right semi-infinite words $(P_i)_{i\ge 1}$
with the following properties. For each $i\ge 1$, the word $\ub\ua$
is a suffix of $Q'_i$, the word $\ua\ub$ is a suffix of $Q''_i$, and
we have
\begin{equation}
 \label{crit:thm:eq5}
 A' = (Q'_iP_i)^{\psi_i},
 \quad
 A'' = (Q''_iP_i)^{\psi_i}
 \et
 \ua P = (\ua P_i)^{\psi_i},
\end{equation}
for the sequence $(\psi_i)_{i\ge 1}$ defined by
\eqref{crit:thm:def_psi}. If $A'$ and $A''$ are of type II, we reach
the same conclusion upon starting with $n_1=1$, $Q'_1=Q'$,
$Q''_1=Q''$ and $P_1=P$. Then, in all cases, we deduce from the last
equality in \eqref{crit:thm:eq5} that $\ua^{\psi_i}$ is a prefix of
$\ua P$ for each $i\ge 1$, and this proves 2).

Lemma \ref{crit:lemmaP} shows that $(\ua\ub)^\psi = \ua^{U\psi} =
\ub^{V\psi}$ takes the form $\ua\up\ub$ with a palindrome
$\up\in\cW_0$ for any product $\psi$ of $U$ and $V$.  Thus, for any
integer $i\ge 1$, we can write
\[
 \ub^{\psi_{2i}} = \ua\up_{2i}\ub
 \et
 \ua^{\psi_{2i+1}} =  \ua\up_{2i+1}\ub
\]
for some palindromes $\up_{2i}$ and $\up_{2i+1}$.  Since
$\psi_{2i+1} = U^{n_{2i+1}}\psi_{2i}$, we find that
\[
 (\ua\ub)^{\psi_{2i+1}}
  = \ua^{\psi_{2i+1}} \ub^{\psi_{2i}}
  = \ua\up_{2i+1}\ub\ua\up_{2i}\ub.
\]
Thus $\up_{2i+1}\ub\ua\up_{2i}$ is a palindrome, and so
\begin{equation}
 \label{crit:thm:eq6}
 \up_{2i+1}\ub\ua\up_{2i} = \up_{2i}\ua\ub\up_{2i+1}.
\end{equation}
This shows in particular that $\up_{2i}$ is a prefix of $\up_{2i+1}$
because, since $\ub^{\psi_{2i}}$ is a proper suffix of
$\ua^{\psi_{2i+1}} = (\ua \ub^{n_{2i+1}})^{\psi_{2i}}$, the length
of $\up_{2i}$ as a product of $\ua$ and $\ub$ is shorter than that
of $\up_{2i+1}$.

Fix any index $i\ge 1$.  By \eqref{crit:thm:eq5}, we have $\ua P =
(\ua P_{2i+1})^{\psi_{2i+1}}$, thus
\begin{equation}
 \label{crit:thm:eq7}
 P = \up_{2i+1}\ub P_{2i+1}^{\psi_{2i+1}}.
\end{equation}
In particular, $\up_{2i+1}$ is a prefix of $P$ and so $\up_{2i}$ is
also a prefix of $P$.  Since $\ua\ub$ is a suffix of $Q''_{2i+1}$,
we deduce from \eqref{crit:thm:eq5} that $A''$ admits the suffix
\[
 \begin{aligned}
 (\ua\ub P_{2i+1})^{\psi_{2i+1}}
  &= \ua\up_{2i+1}\ub\ua\up_{2i}\ub P_{2i+1}^{\psi_{2i+1}} \\
  &= \ua\up_{2i}\ua\ub\up_{2i+1}\ub P_{2i+1}^{\psi_{2i+1}}
     \quad &\text{by \eqref{crit:thm:eq6},}\\
  &= \ua\up_{2i}\ua\ub P
     \quad &\text{by \eqref{crit:thm:eq7}.}
 \end{aligned}
\]
Thus, $\up_{2i}\ua\ub$ is a common suffix of $Q''$ and $P^*\ua\ub$.
Similarly, since $\ub\ua$ is a suffix of $Q'_{2i}$, the formulas
\eqref{crit:thm:eq5} show that $A'$ admits the suffix
\[
 (\ub\ua P_{2i})^{\psi_{2i}}
  = \ub^{\psi_{2i}} \ua P
  = \ua\up_{2i}\ub\ua P,
\]
thus, $\up_{2i}\ub\ua$ is a common suffix of $Q'$ and $P^*\ub\ua$.
Letting $i$ go to infinity, we deduce that $Q''=P^*\ua\ub$ and that
$Q' = P^*\ub\ua$.

Conversely, assume that $P$ satisfies the condition 2) of the
theorem.  To complete the proof, it remains only to show that
$L(P^*\ua\ub P) = L(P^*\ub\ua P) =3$.  Since $P^*\ua\ub P$ is the
reverse of $P^*\ub\ua P$, Lemma 5 of \cite{Bo} reduces this task to
showing that $L(P^*\ub\ua P) =3$.  Since the palindrome $\up_{2i+1}$
is a prefix of $P$ whose length goes to infinity with $i$, any
finite subword of $P^*\ub\ua P$ is contained in $\up_{2i+1} \ub \ua
\up_{2i+1}$ for some $i\ge 1$, and so is contained in the purely
periodic word $\cdots\Pi_{2i+1}\Pi_{2i+1}\Pi_{2i+1}\cdots$ with
period $\Pi_{2i+1} = \ua^{\psi_{2i+1}} = \ua\up_{2i+1}\ub$.  By
Theorem 15 of \cite{Bo} this word has $L(\cdots \Pi_{2i+1}
\Pi_{2i+1} \cdots) < 3$ (because $\Pi_{2i+1} = (\ua\ub)^\psi$ with
$\psi = U^{n_{2i+1}-1}\psi_{2i}$).  By continuity, this implies that
$L(P^*\ub\ua P) \le 3$.  Since $P$ is not ultimately periodic, this
must be an equality \cite[Thm.~15]{Bo}.
\end{proof}

We can now complete the proof of our main result which reads as
follows.

\begin{theorem}
 \label{crit:thm:main}
The set $\{\xi_\um\,;\, \um\in\Sigma^*\}$ constitute a system of
representatives of the $\GL_2(\bZ)$-equivalence classes of extremal
numbers $\xi$ with $\nu(\xi)=1/3$.
\end{theorem}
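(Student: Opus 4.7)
The easier half comes from earlier results: Theorem \ref{ext:thm_xi} shows that the family $\{\xi_\um:\um\in\Sigma^*\}$ consists of pairwise $\GL_2(\bZ)$-inequivalent extremal numbers in $\cE_3^+$, and Corollary \ref{red:thm:cor2} gives $\nu(\xi_\um)=1/3$ for each such $\um$. So the task reduces to showing that any extremal $\xi$ with $\nu(\xi)=1/3$ is $\GL_2(\bZ)$-equivalent to some $\xi_\um$. Using Proposition \ref{cont:prop:balanced}, I first replace $\xi$ by its unique balanced representative, the Lagrange constant being $\GL_2(\bZ)$-invariant. Writing $\xi=[0,a_1,a_2,\dots]$ with conjugates coded by $Q'$ and $Q''$ as in Corollary \ref{crit:propL:cor}, that corollary supplies $L(Q'P)=L(Q''P)=3$ together with $a'_0\neq a''_0$ and $P$ not ultimately periodic. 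The implication 1)$\Rightarrow$2) of Theorem \ref{crit:thm} then produces positive integers $(n_i)_{i\ge 1}$ and endomorphisms $\psi_i$ as in \eqref{crit:thm:def_psi} such that $\ua^{\psi_i}$ is a prefix of $\ua P$ for every $i\ge 1$.

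The key observation is that the $\GL_2(\bZ)$-equivalent number $\eta:=g\cdot\xi$ with $g=\matrice{1}{1}{1}{2}$ has continued fraction $[0,1,1,a_1,a_2,\dots]$, so $P_\eta=\ua P$; in particular $\eta$ is an extremal number in $(0,1)$. Applying Proposition \ref{cont:prop:cube} to $\eta$, only finitely many finite words have a cube as a prefix of $\ua P$. I claim this forces $n_i=1$ for all sufficiently large $i$. Indeed, unfolding the recursion behind Theorem \ref{crit:thm}, the residue word $P_k$ at level $k$ begins with $(n_{k+1}+1)$ successive copies of a common block, and applying the accumulated substitution $\psi_k$ exhibits a cube of a specific word $\Pi_k$, whose length strictly grows with $k$, as a prefix of $\ua P$ whenever $n_{k+1}\ge 2$. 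If $n_i\ge 2$ held for infinitely many $i$, the corresponding $\Pi_k$ would constitute infinitely many distinct finite words admitting a cube prefix of $\ua P$, contradicting Proposition \ref{cont:prop:cube}. Hence there exists $N$ with $n_i=1$ for all $i>N$.

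Once $n_i=1$ for $i>N$, repeated use of the identities $\ua^{V\phi}=\ua^\phi$ and $\ua^{U\phi}=(\ua\ub)^\phi$ in \eqref{crit:thm:def_psi} gives $\ua^{\psi_{N+2k}}=(\ua\ub)^{(VU)^{k-1}\psi'}$ for every $k\ge 1$, where $\psi'$ equals $\psi_N$ or $V\psi_N$ depending on the parity of $N$. Thus $(\ua\ub)^{(VU)^i\psi'}$ is a prefix of $\ua P=P_\eta$ for every $i\ge 0$, which is exactly the criterion of Theorem \ref{cont:thm} applied to $\eta$ with $\psi=\psi'$. Therefore $\eta=\xi_\um$ for some $\um\in\Sigma^*$, and $\xi\sim\eta=\xi_\um$ as required.

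The main obstacle in this plan is the cube-propagation step: precisely identifying the words $\Pi_k$ and confirming that their lengths grow beyond any bound when $n_i\ge 2$ occurs infinitely often. This requires careful bookkeeping of how the successive substitutions $U^{n_j}$ and $V^{n_j}$ nest inside $\psi_k$ and embed the growing initial blocks of $P_k$ into $\ua P$ as genuinely distinct cube-prefixes.
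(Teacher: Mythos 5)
Your overall strategy coincides with the paper's: same first half (Theorem \ref{ext:thm_xi} plus Corollary \ref{red:thm:cor2}), same reduction to a balanced representative, same use of Corollary \ref{crit:propL:cor} and of the implication 1) $\Rightarrow$ 2) of Theorem \ref{crit:thm}, and the same plan of killing all but finitely many exponents $n_i$ by feeding cube prefixes into Proposition \ref{cont:prop:cube} and then invoking Theorem \ref{cont:thm}. But the step you yourself flag as ``the main obstacle'' is precisely where the real work lies, and as sketched it does not go through. The difficulty is one of parity. Writing $\uv_{2i+1}=\ua^{\psi_{2i+1}}$ and $\uv_{2i}=\ub^{\psi_{2i}}$, the recursions give $\uv_{2i+1}=\uv_{2i-1}(\uv_{2i-1}^{n_{2i}}\uv_{2i-2})^{n_{2i+1}}$, so $\uv_{2i-1}^{\,n_{2i}+1}$ is genuinely a \emph{prefix} of $\ua P$ and Proposition \ref{cont:prop:cube} forces $n_{2i}=1$ for large $i$. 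For the odd-indexed exponents, however, the repetition $\uv_{2i}^{\,n_{2i+1}}$ occurs only \emph{after} the initial block $\uv_{2i-1}$; it is a factor of $\ua P$ with a growing offset, not a prefix, and Proposition \ref{cont:prop:cube} says nothing about interior cubes. Your single auxiliary number $\eta=[0,\ua P]$ therefore controls only half of the sequence $(n_i)$.

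The paper closes this gap by passing to reversed words: Lemma \ref{crit:lemmaP} gives the palindromic factorization $\ua^{\psi_{2i+1}}=\ua\,\up_{2i+1}\,\ub$ with $\up_{2i+1}$ a prefix of $P$, from which one deduces that $\uv_{2i}^*$ is a prefix of $\ub P$ and that $(\uv_{2i-2}^*)^{\,n_{2i-1}+1}$ is a prefix of $\uv_{2i}^*$; applying Proposition \ref{cont:prop:cube} to the second extremal number $[0,\ub P]$ (also in the class of $\xi$) then yields $n_{2i-1}=1$ for large $i$. So you need both words $\ua P$ and $\ub P$, and the palindrome lemma is not optional bookkeeping but the mechanism that converts the offset repetitions into honest cube prefixes. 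The final step of your argument (once all $n_i=1$, identifying $\ua^{\psi_{2i+1}}=(\ua\ub)^{(VU)^{i-i_0}\psi_0}$ and invoking Theorem \ref{cont:thm}) is correct and matches the paper. As written, though, the proposal is an accurate outline with its central combinatorial step missing and, in the form sketched, insufficient.
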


\begin{proof}
According to Theorem \ref{ext:thm_xi} the extremal numbers $\xi_\um$
with $\um\in\Sigma^*$ are two by two inequivalent and, by Corollary
\ref{red:thm:cor2}, their Lagrange constant is $1/3$.  It remains to
show that any extremal number $\xi$ with $\nu(\xi)=1/3$ is
equivalent to one of these. As mentioned at the beginning of this
section, in order to show this, we may assume, by Proposition
\ref{cont:prop:balanced}, that $\xi$ is balanced. Then Corollary
\ref{crit:propL:cor} shows that its continued fraction expansion
takes the form $\xi=[0,P]$ where $P$ is a right semi-infinite word
on positive integers which is not ultimately periodic and satisfies
the condition 1) of Theorem \ref{crit:thm}. Let $(n_i)_{i\ge 1}$ be
the sequence of positive integers such that, for the corresponding
sequence $(\psi_i)_{i\ge 1}$ of endomorphisms of $\cW_0$ given by
\eqref{crit:thm:def_psi}, the word $\ua^{\psi_i}$ is a prefix of
$\ua P$ for each $i\ge 1$. Define
\[
 \uv_i = \begin{cases}
       \ua^{\psi_i} &\text{if $i\ge 1$ is odd,}\\
       \ub^{\psi_i} &\text{if $i\ge 2$ is even.}
       \end{cases}
\]
The recurrence relations \eqref{crit:thm:def_psi} translate into
\begin{align}
 \label{crit:thm:main:eq1}
 \uv_{2i+1}
  &= \ua^{\psi_{2i+1}}
   = (\ua\ub^{n_{2i+1}})^{\psi_{2i}}
   = \uv_{2i-1}\uv_{2i}^{n_{2i+1}}, \\
 \label{crit:thm:main:eq2}
 \uv_{2i+2}
  &= \ub^{\psi_{2i+2}}
   = (\ua^{n_{2i+2}}\ub)^{\psi_{2i+1}}
   = \uv_{2i+1}^{n_{2i+2}}\uv_{2i}.
\end{align}

We know that $\uv_{2i+1}$ is a prefix of $\ua P$ for each $i\ge 1$.
We claim that the reverse $\uv_{2i}^*$ of $\uv_{2i}$ is a prefix of
$\ub P$ for each $i\ge 1$. To prove this, we note, as in the proof
of Theorem \ref{crit:thm}, that $\uv_{2i+1}$ is the images of
$\ua\ub$ by $U^{n_{2i+1}-1}\psi_{2i}$ and so, by Lemma
\ref{crit:lemmaP}, it takes the form $\uv_{2i+1} = \ua\up_{2i+1}\ub$
for some palindrome $\up_{2i+1}$.  Then, $\up_{2i+1}$ is a prefix of
$P$.  Moreover, the formula \eqref{crit:thm:main:eq1} implies that
$\uv_{2i}$ is a suffix of $\up_{2i+1}\ub$.  Thus, $\uv_{2i}^*$ is a
prefix of $\ub\up_{2i+1}$ and so is a prefix of $\ub P$.

Using \eqref{crit:thm:main:eq1} and \eqref{crit:thm:main:eq2}, we
also note that, for each $i\ge 2$, the word
\[
 \uv_{2i+1}
  = \uv_{2i-1}\uv_{2i}^{n_{2i+1}}
  = \uv_{2i-1} (\uv_{2i-1}^{n_{2i}}\uv_{2i-2})^{n_{2i+1}}
\]
admits $\uv_{2i-1}^{n_{2i}+1}$ as a prefix, while the word
\[
 \uv_{2i}^*
  = (\uv_{2i-1}^{n_{2i}}\uv_{2i-2})^*
  = \big( (\uv_{2i-3}\uv_{2i-2}^{n_{2i-1}})^{n_{2i}} \uv_{2i-2}
  \big)^*
\]
admits $(\uv_{2i-2}^*)^{n_{2i-1}+1}$ as a prefix.  Therefore,
$\uv_{2i-1}^{n_{2i}+1}$ is a prefix of $\ua P$ and
$(\uv_{2i-2}^*)^{n_{2i-1}+1}$ is a prefix of $\ub P$ for each $i\ge
2$.  Since $[0,\ua P]$ and $[0,\ub P]$ are the continued fraction
expansions of fixed extremal numbers (in the equivalence class of
$\xi$), we deduce from Proposition \ref{cont:prop:cube} that
$n_{2i}=n_{2i+1}=1$ for each sufficiently large integer $i$, say for
$i\ge i_0$.  Then, upon putting $\psi_0=\psi_{2i_0}$, we obtain
\[
 \psi_{2i+1} = U(VU)^{i-i_0}\psi_0
\]
for each $i\ge i_0$, and so
\[
 \ua^{\psi_{2i+1}} = (\ua\ub)^{(VU)^{i-i_0}\psi_0}
\]
is a prefix of $\ua P$ for each $i\ge i_0$.  By Theorem
\ref{cont:thm} this implies that $[0,\ua P] = \xi_\um$ for some
$\um\in\Sigma^*$.
\end{proof}

We conclude with the following result which provides an additional
link between extremal numbers and Markoff's theory.

\begin{corollary}
 \label{crit:thm:main:cor}
Let $\xi$ be an extremal number and let $(\alpha_i)_{i\ge 1}$ be a
sequence of best quadratic approximations to $\xi$ in the sense of
Definition \ref{conj:def:best_app}.  Then the following assertions
are equivalent:
\begin{itemize}
 \item[1)] $\nu(\xi) = 1/3$,
 \item[2)] $\nu(\alpha_i) > 1/3$ for each sufficiently large $i$,
 \item[2)] $\nu(\alpha_i) > 1/3$ for infinitely many $i$.
\end{itemize}
\end{corollary}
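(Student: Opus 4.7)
For $1)\Rightarrow 2)$, I would appeal to Theorem~\ref{crit:thm:main}, which provides $\um\in\Sigma^*$ and $g\in\GL_2(\bZ)$ with $\xi=g\cdot\xi_\um$. Lemma~\ref{conj:lemma:best_app} exhibits an explicit sequence $(\beta_j)$ of best quadratic approximations to $\xi_\um$ whose terms are $\GL_2(\bZ)$-equivalent to $\alpha_{\um^{(j)}}$ (either equal to $\alpha_{\um^{(j)}}$, or equal to $\alphabar_{\um^{(j)}}+3$, which is equivalent to $\alphabar_{\um^{(j)}}$ by translation and hence to $\alpha_{\um^{(j)}}$ by the last assertion of Theorem~\ref{M:thmM}). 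The same argument as in Corollary~\ref{conj:cor:A.xi} shows that $(g\cdot\beta_j)$ is a sequence of best quadratic approximations to $\xi$, so by the uniqueness clause of Proposition~\ref{conj:prop:alpha} our fixed $(\alpha_i)$ is a shift of this sequence beyond some index. The $\GL_2(\bZ)$-invariance of $\nu$ together with the formula $\nu(\alpha_{\um^{(j)}})=1/\sqrt{9-4 m_j^{-2}}>1/3$ from Theorem~\ref{M:thmM} then gives $\nu(\alpha_i)>1/3$ for all sufficiently large $i$.

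The implication $2)\Rightarrow 3)$ is trivial. For $3)\Rightarrow 1)$, we already have $\nu(\xi)\le 1/3$ since extremal numbers are transcendental and Markoff's theory yields $\nu\le 1/3$ for non-quadratics (see~\S\ref{sec:intro}). To prove $\nu(\xi)\ge 1/3$, Theorem~\ref{red:thm} reduces the task to showing $\mu(G)/\sqrt{\disc(G)}\ge 1/3$ for at least one $G\in\{G',G''\}$. By Theorem~\ref{M:thmM}, any quadratic irrational $\alpha$ with $\nu(\alpha)>1/3$ satisfies $\nu(\alpha)=\mu(F)/\sqrt{\disc(F)}$ for any nonzero real multiple $F$ of its minimal polynomial, since both quantities are invariant under the $\GL_2(\bZ)$ action and this identity holds for the representatives $F_\um$. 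Lemma~\ref{conj:lemmaF} shows that $F_i$ is a bounded nonzero rational multiple of the primitive minimal polynomial of $\alpha_i$, whence $\mu(F_i)/\sqrt{\disc(F_i)}=\nu(\alpha_i)>1/3$ for infinitely many $i$. Restricting to a subsequence of fixed parity, we may assume these indices are all odd, so that $G_i=G'$ throughout.

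By Lemma~\ref{conj:lemmaFG}, the rescaled polynomials $H_i:=F_i/r_i$ satisfy $H_i=G'+O(X_i^{-2})\to G'$ coefficient-wise, with $\disc(H_i)\to\disc(G')\ne 0$, and by scale-invariance $\mu(H_i)/\sqrt{\disc(H_i)}>1/3$. A lower semi-continuity argument then yields $\mu(G')/\sqrt{\disc(G')}\ge 1/3$: otherwise some fixed nonzero $(u,t)\in\bZ^2$ would witness $|G'(u,t)|<(1/3)\sqrt{\disc(G')}$, and by continuity the strict inequality would persist for $H_i$ at $(u,t)$ for $i$ large, contradicting the bound on $\mu(H_i)$. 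Theorem~\ref{red:thm} then yields $\nu(\xi)=\mu(G')/\sqrt{\disc(G')}\ge 1/3$, completing the proof. The main obstacle is the identification $\mu(F_i)/\sqrt{\disc(F_i)}=\nu(\alpha_i)$, which requires tracking the denominators $d_i$ and the content of $F_i$ through Lemma~\ref{conj:lemmaF}; this is routine bookkeeping. The remaining reduction via Theorem~\ref{red:thm} and the continuity step are then a clean finish.
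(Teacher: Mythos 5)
Your proof is correct and follows essentially the same route as the paper: $1)\Rightarrow 2)$ via Theorem \ref{crit:thm:main}, Lemma \ref{conj:lemma:best_app} and Markoff's Theorem \ref{M:thmM}, and $3)\Rightarrow 1)$ by passing to a fixed parity, noting that the normalized forms with roots $\alpha_i,\alphabar_i$ converge to $G'$ (the paper gets this from Proposition \ref{conj:prop:xi'xi''}, you from Lemma \ref{conj:lemmaFG} — same content), and applying upper semicontinuity of $\mu$ together with Theorem \ref{red:thm}. One small remark: the "bookkeeping" of $d_i$ and the content of $F_i$ you flag at the end is unnecessary, since $\mu(F)/\sqrt{\disc(F)}$ is scale-invariant, as you yourself use earlier.
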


\begin{proof}
Suppose first that $\nu(\xi)=1/3$.  Then, by the preceding theorem,
$\xi$ is equivalent to $\xi_\um$ for some $\um\in\Sigma^*$ and so,
by Lemma \ref{conj:lemma:best_app}, each $\alpha_i$ with $i$
sufficiently large is equivalent to $\alpha_\un$ or $\alphabar_\un$
for some $\un\in\Sigma^*$.  According to Markoff's Theorem
\ref{M:thmM}, these quadratic numbers have $\nu(\alpha_\un) =
\nu(\alphabar_\un) >1/3$.  This means that $\nu(\alpha_i)>1/3$ for
each sufficiently large $i$, and a fortiori for infinitely many
values of $i$.

Conversely, suppose that $\nu(\alpha_{i_j})> 1/3$ for a strictly
increasing sequence of positive integers $(i_j)_{j\ge 1}$.  Without
loss of generality, we may assume that these integers $i_j$ all have
the same parity. Then, by Proposition \ref{conj:prop:xi'xi''}, the
sequence $(\alphabar_{i_j})_{j\ge 1}$ converges to some conjugate
$\xi'$ of $\xi$ and so, upon defining
\[
 F_j(U,T):= (T - \alpha_{i_j} U) (T - \alphabar_{i_j} U)
 \et
 G'(U,T):=(T-\xi U)(T-\xi'U),
\]
we obtain $G'(U,T)/\sqrt{\disc(G')} = \lim_{j\to\infty}
F_j(U,T)/\sqrt{\disc(F_j)}$, thus
\[
 \nu(\xi) = \frac{\mu(G')}{\sqrt{\disc(G')}} \ge \limsup_{j\to\infty}
\frac{\mu(F_j)}{\sqrt{\disc(F_j)}}\cdot
\]
where the first equality comes from Theorem \ref{red:thm}.  By
Markoff's Theorem \ref{M:thmM}, the above limit superior is equal to
$1/3$.  This gives $\nu(\xi)\ge 1/3$ and we conclude that
$\nu(\xi)=1/3$ since $\xi$ is not a quadratic number.
\end{proof}

\begin{lastremark}
For each $\xi\in\bR$, denote by $\lambdahat_2(\xi)$ the supremum of
all real numbers $\lambda>0$ such that the inequalities $|x_0|\le
X$, $|x_0\xi-x_1|\le X^{-\lambda}$ and $|x_0\xi^2-x_2|\le
X^{-\lambda}$ admit a non-zero solution $(x_0,x_1,x_2)\in\bZ^3$ for
each sufficiently large value of $X$.  By \cite{Rexp}, we know that
the values taken by $\lambdahat_2$ on the set of non-quadratic
irrational real numbers are dense in the interval $[1/2,1/\gamma]$.
It would be interesting to know what happens if instead we consider
the values taken by $\lambdahat_2$ on the set of irrational numbers
$\xi$ with $\nu(\xi)=1/3$.  By looking at Sturmian continued
fractions, Y.~Bugeaud and M.~Laurent showed in \cite[Thm.~3.1]{BL}
that, for each bounded sequence of positive integers $(s_i)_{i\ge
1}$, there exists a real number $\xi$ with $\lambdahat_2(\xi) =
(1+\sigma)/(2+\sigma)$ where $\sigma = \liminf_{k\to\infty}
[0,s_k,s_{k-1},,\dots,s_1]$. I think that, by considering
appropriate paths in the Markoff tree \eqref{M:Mtreex} like in
\S\ref{sec:ext}, one should be able to produce real numbers $\xi$
with the same exponents $\lambdahat_2$ and with $\nu(\xi)=1/3$.  By
analogy with work of S.~Fischler in \cite{Fi}, it is possible that
this exhausts the set of all possible values taken by $\lambdahat_2$
on the real numbers $\xi$ with $\nu(\xi)=1/3$.

\end{lastremark}

\end{document}